\newtheorem{theorem}{Theorem}
\newtheorem{algorithm}{Algorithm}
\newtheorem{lemma}{Lemma}
\newtheorem{remark}{Remark}
\def\BB{\mathbf B}
\def\LL{\mathbf L}
\def\XX{\mathbf X}
\def\II{\mathbf I}
\begin{document}
\begin{frontmatter}

\title{Rare-event Analysis for Extremal Eigenvalues of white Wishart matrices}
\runtitle{Rare-event Analysis of Wishart matrices}

\begin{aug}
\author{\fnms{Tiefeng} \snm{Jiang}\ead[label=e1]{jiang040@umn.edu}},
\author{\fnms{Kevin} \snm{Leder}\ead[label=e2]{lede0024@umn.edu}},
and
\author{\fnms{Gongjun} \snm{Xu}\ead[label=e3]{xuxxx360@umn.edu}}

\runauthor{Jiang, Leder and Xu}

\affiliation{ University of Minnesota}

\address{Tiefeng Jiang\\
School of Statistics,\\
 University of Minnesota,\\
 224 Church
Street SE, Minneapolis, MN55455, \\
\printead{e1}}

\address{Kevin Leder\\
Industrial and Systems Engineering,\\
 University of Minnesota, \\
 111 Church Street SE, Minneapolis, MN55455\\
\printead{e2}}
\address{Gongjun Xu\\
School of Statistics,\\
 University of Minnesota,\\
 224 Church
Street SE, Minneapolis, MN55455, \\
\printead{e3}}

\end{aug}

\begin{abstract}
In this paper we consider the extreme behavior of the extremal eigenvalues  of white Wishart matrices, which plays an important role in multivariate analysis.
In particular, we focus on the case when the dimension of the feature $p$ is much larger than or comparable to the number of observations $n$,
a common situation in modern data analysis.
We provide asymptotic approximations and bounds for the tail probabilities of the extremal eigenvalues. Moreover, we construct efficient Monte Carlo simulation algorithms to compute the tail probabilities.
Simulation results show that our method has the best performance amongst known approximation approaches, and furthermore provides an efficient and accurate way for evaluating the tail probabilities in practice.
\end{abstract}


\begin{keyword}[class=MSC]
\kwd[Primary ]{65C05}
\kwd[, ]{60B20}
\end{keyword}

\begin{keyword}
\kwd{Importance sampling; extremal eigenvalues; random matrix; $\beta$-Laguerre ensemble.}
\end{keyword}

\end{frontmatter}

\section{Introduction}
In many modern scientific settings data sets are generated where the dimension of the samples is comparable or even larger than the sample size.
Analysis on such multidimensional data frequently involves estimating rare-event probabilities, such as  small tail probabilities of  test statistics.
For instance, in statistical hypothesis testing, consider multiple comparisons with relatively few signals of interest among a large number of null statistics. In order to control the overall false-positive error rate at a certain level, we may need to evaluate a very small marginal $p$-value for each individual test statistic.



This paper focus on the tail probabilities of extremal eigenvalues of white Wishart matrices, which play an important role in multivariate statistical analysis and have wide applications in many fields, such as image analysis, signal processing, and functional data analysis.
A white Wishart matrix with parameters $\mathbf{\Sigma}=\mathbf{I}_p$ (the $p\times p$ identity matrix), $n$ and $\beta=1$ is the sample covariance matrix $\XX^*\XX$ where $\XX=(x_{ij})_{n\times p}$ and $x_{ij}$ are i.i.d. $N(0,1)$ random variables. The most natural alternative values of $\beta$ are $\beta=2$ for $x_{ij}$ complex valued and $\beta=4$ for $x_{ij}$ quaternion valued.
Most data analysis in statistics focuses on the case when $\beta = 1$.
In engineering and applied science applications, such as signal processing, oceanography, and atmospheric sciences, it is common to use complex valued variables to study two dimensional signals. In these settings the use of $\beta = 2$ is useful for the understanding of statistical properties of the data set.
In physics, for quantum systems with a time reversal symmetry $T$, where either $T^2=1$ or $T^2=-1$, the former leads to symmetric matrices ($\beta=1,2$) and the latter leads to symplectic matrices ($\beta=4$).

The largest eigenvalue of a sample covariance matrix gives useful information for distinguishing a ``signal subspace'' of higher variance from the background noise variables \citep{johnstone2001}. In particular, for $n$ i.i.d. $p$ dimensional Gaussian observations following $N(\mathbf 0,\mathbf \Sigma)$, consider testing the null hypothesis that $\mathbf \Sigma=\mathbf I_p$, where $\mathbf  I_p$ is the identity matrix. Following Roy's union intersection principle \citep{roy1953heuristic},  one can take the largest eigenvalue of the sample covariance matrix as the test statistics and reject the null hypothesis for large values.
 Then the corresponding  $p$-value is the tail probability of the largest eigenvalue under $\mathbf \Sigma=\mathbf I_p$. For example see \cite*{patterson2006population} for applications in SNP (single nucleotide polymorphism) data, \cite*{bianchi2011performance}  for applications in detecting single-source with a sensor array, and \cite*{KDS03} for applications in financial market analysis.
Accurate evaluations of such tail probabilities are needed in performing the corresponding statistical analysis and this motivates our study.

\subsection{Problem setting and related studies}
For a white Wishart matrix, it is in fact possible to consider arbitrary values of $\beta>0$.
This more general class of matrices is referred to in the literature  as the $\beta$-Laguerre ensemble.
In this work we primarily focus on the largest  eigenvalues of the $\beta$-Laguerre ensemble in the setting of $p\geq n$, $\mathbf{\Sigma} = \mathbf{I}_p$ and arbitrary $\beta>0$. For this setting the $n$ positive eigenvalues of the $\beta$-Laguerre ensemble  $\lambda_{1},\cdots,\lambda_{n}$ are distributed
with probability density function
\begin{equation}\label{density}
f_{n,p,\beta}(\lambda_{1},\cdots,\lambda_{n})=
c_{n,p,\beta}\prod_{1\leq i<j\leq n}|\lambda_i-\lambda_j|^{\beta}
\cdot\prod_{i=1}^n \lambda_i^{\frac{\beta(p-n+1)}{2}-1}\cdot
e^{-\frac{1}{2} \sum_{i=1}^n \lambda_i},
\end{equation}
where $c_{n,p,\beta}$ is a normalizing constant taking the form of
\begin{equation}\label{cnp}
c_{n,p,\beta}= 2^{-\frac{\beta np}{2}}\prod_{j=1}^n \frac{\Gamma(1+\frac{\beta}{2})}
{\Gamma(1+\frac{\beta}{2}j)\Gamma(\frac{\beta}{2}(p-n+j))}.
\end{equation}
In particular, when $\beta = 1, 2$ and $4$, the function $f_{n,p,\beta}(\lambda_{1},\cdots,\lambda_{n})$ in \eqref{density} is the density function of the $n$ positive eigenvalues of  Wishart matrix $\XX^*\XX,$ where $\XX=(x_{ij})_{n\times p}$ and $x_{ij}$'s are i.i.d. standard  $(\beta=1)$, complex $(\beta=2)$, or quaternion $(\beta=4)$ Gaussian random variables (r.v.'s).
See, for example,  \cite{james1964distributions} and \cite{muirhead2009aspects}
  for the cases of $\beta=1$ and $2$,
and \cite{macdonald1998symmetric} and \cite{edelman2005random} for $\beta=4$. See also \cite*{anderson2010introduction} for further discussion and applications.

Let $\lambda_{(1)}>\cdots>\lambda_{(n)}$ be the order statistics of $\lambda_{1},\cdots,\lambda_{n}$. The joint density function of the order statistics is
\begin{equation}\label{orderdensity}
g_{n,p,\beta}(\lambda_1,\cdots,\lambda_n)
=n! f_{n,p,\beta}(\lambda_{1},\cdots,\lambda_{n}) \times I_{(\lambda_1>\cdots>\lambda_n)},
\end{equation}
where $I_{(\cdot)}$ is the indicator function.
In this paper we  focus on the asymptotic approximation and efficient simulation of tail probabilities
$$P(\lambda_{(1)} > px) \mbox{ as } p\to\infty$$
for any $\beta>0$ and $x>\beta$.
In particular, we consider the high-dimensional settings where $p/n\to\gamma\in [1,\infty)$ or $p/n\to\infty$.

Large sample properties of the largest eigenvalue have been extensively studied in the literature, most of which focus on the asymptotic distribution of $\lambda_{(1)}$ and its large deviation principle.
For the asymptotic distribution of $\lambda_{(1)},$ \cite{johansson2000shape} and \cite{johnstone2001}  studied the cases when  $p/n\to\gamma\in (0, \infty)$ and  $\beta=2$ and $1$,  and showed that the largest eigenvalue
(with proper recentering and rescaling) follows the Tracy-Widom distribution as appeared
in the study of the Gaussian unitary ensemble.  \cite{el2003largest} extended the asymptotic regime to the case when $p/n\to \infty$.
 For general $\beta>0,$ the limiting distribution of $\lambda_{(1)}$ is obtained by \cite*{ramirez2011beta} for the $\beta$-Laguerre ensemble when $p/n\to\gamma\in [1, \infty).$  Recently, \cite{jiang2013approximation} studied the distribution of $\lambda_{(1)}$ when $p/n^3\to \infty$.
The large deviation principle for $\lambda_{(1)}$ has also been studied in the literature;  see, for example, Chapter 2.6 in Anderson et al (2009).  \cite{maida2007large} investigated the large deviations for $\lambda_{(1)}$ of rank one deformations of Gaussian ensembles when $p/n\to\gamma\in [1, \infty)$, corresponding to the $\beta$-Laguerre ensemble with $\beta = 2$. \cite{jiang2013approximation} studied the case when $p/n\to\infty$ and derived the closed form of the large deviation rate function.

 In practice, however,  to estimate the tail probabilities of $\lambda_{(1)}$, especially when the probabilities are small, i.e., rare events occur,
approximations based on the large sample distribution and large deviation results may not be directly applicable or sufficiently precise.
In particular, to our knowledge efficient estimation methods for the tail probabilities of $\lambda_{(1)}$ as well as sharp asymptotic approximations  are still lacking in the literature.

\subsection{Our contributions}
The current paper deals with the efficient estimation of tail probabilities of $\lambda_{(1)}$.
To do so, we study the extreme behaviors of the largest eigenvalue and describe the conditional distribution of $\lambda_{(1)}$ given the occurrence of the event $\{\lambda_{(1)}>px\}$.
In particular, we use a  so-called ``three-step peeling'' technique to approximate the tail probability  (see the proofs of  Theorem \ref{theorem1} and Lemma \ref{lemma0}) and
give asymptotic approximations of $P(\lambda_{(1)}>px)$,
which provides the necessary technical tools for the development and theoretical analysis of Monte Carlo based computational algorithms.

More importantly, from a computational point of view,
we utilize the technique of  \textit{importance sampling} to develop an efficient Monte Carlo estimator of $P(\lambda_{(1)}>px)$.
Importance sampling is commonly used as a numerical tool for estimating rare event probabilities in a wide variety of stochastic systems \citep[see, e.g.,][]{SIE76,AsmKro06,DupLedWang07,ASMGLY07,BlaGly07,LiuXu12,xu2014rare}.
However, to the authors' best knowledge, this is the first use of this technique for estimating rare event probabilities in the spectrum of random matrices.
In order to implement an importance sampling algorithm,
it is necessary to construct an alternative sampling measure (or change of measure) under which the eigenvalues of the $\beta$-Laguerre ensemble are sampled.
Ideally, one develops a sampling measure so that the event of interest is no longer rare under the sampling measure.
The challenge is of course the construction of an appropriate sampling measure; one common heuristic is to utilize a sampling measure that approximates the conditional distribution of $\lambda_{(1)}$ given $\{\lambda_{(1)}>px\}$.

In this paper, we propose a change of measure denoted by $Q$ that approximates the conditional measure $P(\cdot | \lambda_{(1)}>px)$ in total variation when $p$ is much larger than $n$.
The proposed change of measure is not of a classical exponential-tilting form commonly used in light-tailed stochastic systems \cite[e.g.,][]{SIE76,ASMGLY07} and it has features that are appealing both theoretically and computationally.
Our proposed estimators are {\it asymptotically  efficient} for all $p/n\to\gamma\in[1,\infty]$, that is, the second moments of estimators decay at the same exponential rate as the square of the first moments; see Section \ref{Subsec:Sim} for more details.
Simulation studies in Section \ref{simulation} show that the proposed  method has the best performance amongst existing approximation approaches, especially when estimating probabilities of rare-events.

The proposed method can be easily generalized to the estimation of the smallest eigenvalue  $\lambda_{(n)}$. With completely analogous analysis, we provide approximations of the tail probability of $\lambda_{(n)}$, i.e.,
$$P(\lambda_{(n)} < py) \mbox{ as } p\to\infty$$
for any $\beta>0$ and $0<y<\beta$. Moreover, we construct the corresponding efficient simulation algorithms
as shown in Section \ref{Subsec:Min}.

\medskip
The rest of the paper is organized as follows. In Section \ref{main} we present the main results, including asymptotic approximations of $P(\lambda_{(1)} > px)$ as well as efficient simulation algorithms.
In Section \ref{simulation} we illustrate the theoretical results through a simulation study and a real data example.
Detailed proofs of main theorems and supporting lemmas are presented in Section \ref{proofmain} and the Supplementary Material, respectively.

Throughout this paper, we write:
$a_n=O(b_n)$ if $\limsup_{n\to\infty}|a_n|/|b_n|<\infty$,
$a_n=\Theta(b_n)$ if $0<\liminf_{n\to\infty}|a_n|/|b_n|\leq \limsup_{n\to\infty}|a_n|/|b_n|<\infty$,
$a_n=o(b_n)$ if $\lim_{n\to\infty}|a_n|/|b_n|= 0$,
$a_n\sim b_n$ if $\lim_{n\to\infty}|a_n|/|b_n|= 1$,
$a_n\lesssim b_n$ if  $\limsup_{n\to\infty}|a_n|/|b_n|\leq 1$,
$a_n=O_p(b_n)$ if $a_n=O(b_n)$ in probability,
and $a_n=o_p(b_n)$ if $a_n=o(b_n)$ in probability.

\section{Main results}\label{main}
We are interested in efficiently estimating $P(\lambda_{(1)} > px)$, which converges to 0 as $p\to \infty$. In Section 2.1, we introduce some commonly used efficiency criteria in the literature; in Sections 2.2-2.4, we present the main asymptotic approximation results and the efficient simulation algorithms.

\subsection{Efficiency criteria in rare-event simulation}
 In the context of rare-event simulations \citep[e.g.,][]{SIE76,ASMGLY07},
 it is necessary to consider the relative computational error with respect to the rare-event probability of interest. 
 In particular, a Monte Carlo estimator $L_p$ is said to be {\it asymptotically efficient} in estimating
the rare-event probability  $P(\lambda_{(1)}> px)$ if $E[L_p] = P(\lambda_{(1)} > px)$ and
\begin{equation}\label{logeff}
\lim_{p\to\infty}\frac{\log E[L_p^2]}{ 2\log P(\lambda_{(1)} > px)} = 1.
\end{equation}
Moreover, $L_p$ is said to be {\it strongly efficient} if $E[L_p] = P(\lambda_{(1)} > px)$ and
\begin{equation}\label{strongeff}
\limsup_{p\to\infty}\frac{ E[L_p^2]}{P(\lambda_{(1)} > px)^2} <\infty.
\end{equation}
There is a rich rare-event simulation literature. An incomplete list of recent works includes \cite{AsmKro06,DupLedWang07,BlaGly07,BL08,BlaGlyLed12,ABL09,LiuXu12,LiuXuTomacs,xu2014rare}.
It is interesting to note that the importance sampling measure we construct in this work has a similar structure to that used in \cite{AsmKro06} where they were studying rare events for sums of i.i.d. heavy-tailed random variables.

\begin{remark}
Suppose we plan to estimate $P(\lambda_{(1)}>px)$ with a given relative accuracy, i.e., to compute an estimator $Z_{p}$ such that
\begin{equation}\label{mse}
P\left(\left|Z_{p}/ {P(\lambda_{(1)}>px)}-1\right| > \varepsilon\right)< \delta
\end{equation}
for some prescribed $\varepsilon, \delta>0$.
For an estimator $L_p$, we can simulate $N$ i.i.d.~copies of $L_{p}$, $\{ L^{(j)}_{p}: j=1,...,N\}$  and  obtain the final estimator $Z_{p} = \frac 1 N \sum_{j=1}^{N} L_{p}^{(j)}$. Then, the estimation error is
$|Z_p - P(\lambda_{(1)}>px)|$.
When  $L_p$ is a strongly efficient estimator as defined in (\ref{strongeff}), the averaged estimator $Z_{p}$
has a relative mean squared error equal to $Var^{1/2} (L_{p})/[N^{1/2}P(\lambda_{(1)}>px)]$.
A simple application of Chebyshev's inequality yields that it suffices to simulate  $N=\Theta(\varepsilon^{-2}\delta ^{-1})$ i.i.d. replicates  of $L_{p}$ to achieve the accuracy in \eqref{mse}.
When $L_p$ is an asymptotically efficient estimator, it suffices to sample $N=\Theta(\varepsilon^{-2}\delta ^{-1}P(\lambda_{(1)}>px)^{-\eta})$, for any $\eta>0$, i.i.d. replicates  of $L_{p}$.
Compared with the crude Monte Carlo simulation, which requires
$N=\Theta(\varepsilon^{-2}\delta ^{-1} \allowbreak P(\lambda_{(1)}>px)^{-1})$ i.i.d. replicates,
the efficient estimators substantially reduce the computational cost.
See Section \ref{simulation} for a simulation study  and further discussion.
\end{remark}
Importance sampling is one of the most widely used methods for variance reduction of Monte Carlo estimators. For ease of notation, we use $P$ to denote the probability measure  of the vector $(\lambda_1,\cdots,\lambda_n)$.
The importance sampling estimator is constructed  based on the following identity:
$$P(\lambda_{(1)} > px) = E \Big[1_{(\lambda_{(1)} > px)}\Big]
=E^Q \Big[1_{(\lambda_{(1)} > px)}\frac{dP}{dQ}\Big],$$
where 
$Q$ is a probability measure such that the Radon-Nikodym derivative $dP/dQ$ is well defined on the set $\{\lambda_{(1)} > px\}$, and we use  $E$ and $E^Q$ to denote the expectations under the measures $P$ and $Q$, respectively.
Then, the random variable defined by
\begin{equation}\label{estr}
L_p=\frac{dP}{dQ}1_{(\lambda_{(1)} > px)}
\end{equation}
is an unbiased estimator of $P(\lambda_{(1)} > px)$ under the measure $Q$. Note that when generating the estimator \eqref{estr} we sample $\lambda_{(1)}$ according to the new measure $Q$.

If we choose $Q(\cdot)$ to be $P^*_{px}(\cdot):=P(\cdot | \lambda_{(1)} > px)$, the conditional probability measure given $\lambda_{(1)}>px$, then the corresponding likelihood ratio $dP/dQ$ is exactly $P(\lambda_{(1)}>px)$ on the set $\{\lambda_{(1)} > px\}$ and it has zero variance under $Q$.
However, this change of measure is of no practical use since it needs the value of the target probability  $P(\lambda_{(1)}>px)$. Nonetheless, this conditional measure $P^*_{px}$  provides a guideline for constructing an efficient change of measure. If we can find a measure $Q$ that is a good approximation of $P^*_{px}$, we would expect the corresponding estimator $L_p$ defined in \eqref{estr} to be efficient.

In the following, we design such change of measures for two different  cases: $p/n\to\infty$ in
Section \ref{Subsec:Sim}
and $p/n\to\gamma\in[1,\infty)$ in Section \ref{Subsec:Sim2}.
An analogous analysis of the smallest eigenvalue $\lambda_{(n)}$ is provided in Section \ref{Subsec:Min}.

\subsection{Efficient simulation for $P(\lambda_{(1)} > px)$ when $p/n\to\infty$}\label{Subsec:Sim}

To achieve efficient estimates as defined above, we need to approximate and bound the tail probability $P(\lambda_{(1)} > px)$ as well as the second moment of the estimator.
In Section \ref{Subsec:Tail}, we derive asymptotic approximations and bounds of $P(\lambda_{(1)} > px)$ under different conditions.
We design efficient simulation algorithms in Section \ref{subsubsec:Sim} and show that the estimate is efficient in the sense of \eqref{logeff} and \eqref{strongeff}.

\subsubsection{Tail probability approximation of $\lambda_{(1)}$}\label{Subsec:Tail}
We have the following  approximations for $P(\lambda_{(1)} > px)$ when $p$ is large.
An exact approximation is given in Theorem \ref{theorem1} when $p/n^{5/3}\to\infty$.
For the general case when $p/n\to\infty$, exact approximations are difficult to obtain and we provide tail approximation bounds, which are good enough to establish the efficiency of the simulation algorithm.
\begin{theorem}\label{theorem1}
Let $x>\beta$. When $p/n^{5/3}\rightarrow\infty$ as $n\to\infty$,
\begin{eqnarray}\label{gxaaaa}
P(\lambda_{(1)} > px) &\sim&
  \exp(B_{n,p,\beta}(x)),
\end{eqnarray}
where $B_{n,p,\beta}(x)$ is defined by
\begin{align*}
B_{n,p,\beta}(x)=&~
 p\left(\frac{\beta}{2}-\frac{\beta}{2}\log\beta-\frac{x}{2}+\frac{\beta}{2}\log x\right)+\frac{\beta n}{2}\log \frac{p}{n} -\frac{\beta+1}{2}\log p\\
&+\beta n\left(-\frac{\log x}{2}+\log(x-\beta)-\frac{\log\beta}{2}+\frac{1}{2}\right)+\frac{1}{2}\log n-\frac{\beta^3n^{2}}{2(x-\beta)^2p}\\
&-(\beta+1)\log(x-\beta)+\frac{\beta}{2}\log ({2}x)-\log (\pi)+\log\Gamma\big(1+\frac{\beta}{2}\big).
\end{align*}

More generally, if $n\to\infty$ and $p/n\to\infty$ we have
\begin{equation}\label{gxgx2}
\log P(\lambda_{(1)} > px)
=B_{n,p,\beta}(x)+ O(1)\frac{ n^{5/2}}{p^{3/2}}.
\end{equation}
\end{theorem}


\begin{remark}\label{remark1}
The tail probability approximation results in Theorem \ref{theorem1} provides technical support for the theoretical analysis of our importance sampling algorithm, where one needs to ensure that the exponential decay rate of the estimator variance matches that of the target tail probability.
Although the term $O(1)$ in \eqref{gxgx2} in the general case is not specified,
the developed approximations are sufficient enough to guarantee the asymptotical efficiency of the proposed Monte Carlo methods.
Construction of the importance sampling estimator and the corresponding approximation results for the estimator variance will be provided in Section \ref{subsubsec:Sim}.

When $n$ is fixed with $p\to\infty$, from the proof of Theorem \ref{theorem1}, we have the same approximation results as in Theorem \ref{theorem1}.
In addition,
we believe that it is possible to extend the proof to the case when $p=\Theta(n^{1+\epsilon})$ for any $\epsilon>0$, and  derive
sharper asymptotic approximation results than Theorem \ref{theorem1}. However, this involves the calculation of the expectation of $\exp\{-\sum_{i=1}^n(\lambda_i/p-\beta)^k\}$ for $k\geq 3$ (see the proof of Theorem 1 for more details).
These extensions will be considered in future work.
\end{remark}

\subsubsection{Efficient simulation method}\label{subsubsec:Sim}
We characterize the proposed measure $Q$ in (\ref{estr}) through two ways.
First, we describe the simulation of the eigenvalues from $Q$ {by} following a two-step procedure.
\begin{algorithm} \label{algorithm1}
~ The algorithm goes as follows.
\begin{itemize}
\item[Step 1.] Generate matrix
$\LL_{n-1,p-1,\beta}:=\BB_{n-1,p-1,\beta} \BB_{n-1,p-1,\beta}^\top,$
where $\BB_{n-1,p-1,\beta}$ is a bidiagonal matrix defined by
\begin{eqnarray*}
\BB_{n-1,p-1,\beta}=\left(\begin{array}{ccccc}
\chi_{\beta p-\beta} &\\
\chi_{\beta(n-2)}& \chi_{\beta p-2\beta}&\\~\\
 &\ddots&\ddots\\~\\
&&\chi_{\beta}& \chi_{\beta p-\beta(n-1)}&\\
\end{array}\right)_{(n-1)\times(n-1)}.
\end{eqnarray*}
Here all of the diagonal and sub diagonal elements are mutually independent with the distribution of $\chi_a$, the square-root of the chi-square distribution with degree of freedom $a$.  Calculate the corresponding eigenvalues $(\lambda_{2},\cdots,\lambda_{n})$ of $\LL_{n-1,p-1,\beta}$ and the order statistics $\lambda_{(2)}>\cdots>\lambda_{(n)}$.
\item[Step 2.] Conditional on $(\lambda_{(2)},\cdots,\lambda_{(n)})$, sample $\lambda_{(1)}$ from the exponential distribution with density
\begin{equation}\label{july4}
f(\lambda_{(1)}):=\frac{x-\beta}{2x} e^{-\frac{x-\beta}{2x} (\lambda_{(1)}-px\vee\lambda_{(2)})}\cdot I_{(\lambda_{(1)}>px\vee \lambda_{(2)})},
\end{equation}
where $x>\beta$ is the  threshold value in  $P(\lambda_{(1)}>px)$.
\end{itemize}
\end{algorithm}

Let $Q$ be the measure induced by combining the above two-step sampling procedure on $(\lambda_{(1)},\cdots,\lambda_{(n)})$. It is defined on $[0, \infty)^n.$
We next describe it using the
Radon-Nikodym derivative between $Q$ and the original measure $P$.
From \cite{dumitriu2002matrix}, we know the order statistics of the eigenvalues of $\LL_{n-1,p-1,\beta}$ has density function $$g_{n-1,p-1,\beta}(\lambda_{2},\cdots,\lambda_{n})=
(n-1)! f_{n-1,p-1,\beta}(\lambda_2,\cdots,\lambda_n)\times I_{(\lambda_2>\cdots>\lambda_n)}$$
as defined in \eqref{orderdensity}.
Then, the sampled $\lambda_{(1)},\cdots,\lambda_{(n)}$ under $Q$ has density function
\begin{eqnarray}\label{aa1}
g_{n-1,p-1,\beta}(\lambda_{2},\cdots,\lambda_{n})\cdot
\frac{x-\beta}{2x} e^{-\frac{x-\beta}{2x} (\lambda_{1}-px\vee\lambda_{2})} I_{(\lambda_{1}>px\vee\lambda_{2})}.\end{eqnarray}
The corresponding importance sampling estimator following \eqref{estr} is
\begin{eqnarray*}
L_{p} 
&=&\frac{g_{n,p,\beta}(\lambda_{(1)},\cdots,\lambda_{(n)})1_{(\lambda_{(1)}>px)}}
{g_{n-1,p-1,\beta}(\lambda_{(2)},\cdots,\lambda_{(n)})\cdot
\frac{x-\beta}{2x} e^{-\frac{x-\beta}{2x} (\lambda_{(1)}-px\vee\lambda_{(2)})} I_{(\lambda_{(1)}>px\vee\lambda_{(2)})}}.
\end{eqnarray*}
The joint density $g_{n,p,\beta}(\lambda_{1},\cdots,\lambda_{n})$ equals
\begin{align}\label{boston1}
& I_{(\lambda_1>\cdots>\lambda_n)} \times n! f_{n,p,\beta}(\lambda_{1},\cdots,\lambda_{n})
\\
=&I_{(\lambda_1>\cdots>\lambda_n)}\times n! c_{n,p,\beta}\prod_{1\leq i<j\leq n}|\lambda_i-\lambda_j|^\beta
\cdot\prod_{i=1}^n \lambda_i^{\frac{\beta(p-n+1)}{2}-1}\cdot
e^{-\frac{1}{2} \sum_{i=1}^n \lambda_i}
\notag\\
=& I_{(\lambda_1>\cdots>\lambda_n)}
 n A_n
  \prod_{i=2}^n(\lambda_1-\lambda_i)^\beta
\cdot \lambda_1^{\frac{\beta(p-n+1)}{2}-1}\cdot
e^{-\frac{1}{2} \lambda_1}
\times g_{n-1,p-1,\beta}(\lambda_2,\cdots,\lambda_n),\notag
\end{align}
 where
\begin{equation}\label{An}
A_n= {c_{n,p,\beta}}/{c_{n-1,p-1,\beta}},
\end{equation}
with $c_{n,p,\beta}$ and $c_{n-1,p-1,\beta}$ defined as in  \eqref{cnp}.
Therefore the importance sampling estimator $L_p$ can be written as
\begin{eqnarray}\label{Lp}
L_{p}
&=&\frac{nA_n  \prod_{i=2}^n(\lambda_{(1)}-\lambda_{(i)})^{\beta}
\cdot \lambda_{(1)}^{\frac{\beta(p-n+1)}{2}-1}\cdot
e^{-\frac{1}{2} \lambda_{(1)}}}
{\frac{x-\beta}{2x} e^{-\frac{x-\beta}{2x} (\lambda_{(1)}-px\vee\lambda_{(2)})}\cdot I_{(\lambda_{(1)}>px\vee\lambda_{(2)})}}1_{(\lambda_{(1)}>px)}.
\end{eqnarray}
Under the measure $Q$, $\lambda_{(1)}>px\vee\lambda_{(2)}$ and therefore $L_p$ is well defined.

The measure $Q$ is constructed such that the behavior of the eigenvalues under $Q$ mimics the tail behavior  given the rare event $\{\lambda_{(1)} > px\}$ under $P$.
According to the proposed simulation procedure, the largest eigenvalue is generated from a truncated exponential distribution at the level about $px$ while the other eigenvalues are generated from the original measure.
We have the following theorem to show the efficiency of the proposed measure.
\begin{theorem}\label{theorem2}
{\normalfont (i)}. If $p/n^{5/3}\rightarrow\infty$, the measure $Q$ approximates $P^*_{px}$, the conditional probability measure given $\lambda_{(1)}>px$, in the total variation sense, i.e.,
$$\lim_{n\rightarrow\infty} \sup_{A\in {\cal F}}|Q(A)-P^*_{px}(A)|=0,$$
where  ${\cal F}$ is the $\sigma$-field ${\cal{B}}([0, \infty)^n)$.
In addition, \begin{eqnarray*}
E^Q\left[L_p^2\right]\sim P(\lambda_{(1)}>px)^2.
\end{eqnarray*}

{\normalfont (ii)}.  If $n^{5/3}/p=O(1)$, we have
\begin{eqnarray*}
E^Q\left[L_p^2\right]=O(1) P(\lambda_{(1)}>px)^2,
\end{eqnarray*}
that is, the importance sampling estimate based on $Q$ is strongly efficient.

{\normalfont (iii)}. More generally, if $p/n\to \infty$, we have
$$\frac{\log E^Q\left[L_p^2\right]}
{2\log P(\lambda_{(1)}>px)}\to 1,$$ that is,  the importance sampling estimate  is asymptotically  efficient.
\end{theorem}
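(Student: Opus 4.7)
The plan is to obtain a closed-form expression for $E^Q[L_p^2]$, perform a Laplace reduction of the inner $\lambda_{(1)}$-integral, evaluate the remaining $Y$-expectation by the same three-step peeling calculation that proves Theorem \ref{theorem1}, and then recover the total variation statement from the $L^2$ bound via Cauchy--Schwarz.

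Starting from the change-of-measure identity $E^Q[L_p^2]=E^P[(dP/dQ)\mathbf{1}_{\lambda_{(1)}>px}]$, the factorization (\ref{boston1}) and the $Q$-density (\ref{aa1}) give
\begin{equation*}
E^Q[L_p^2]=(nA_n)^{2}\,\frac{2x}{x-\beta}\,E_Y\!\left[\int_{px\vee\lambda_{(2)}}^{\infty}\!\phi(X,Y)^{2}\,e^{\frac{x-\beta}{2x}(X-px\vee\lambda_{(2)})}\,dX\right],
\end{equation*}
where $Y=(\lambda_{(2)},\ldots,\lambda_{(n)})\sim g_{n-1,p-1,\beta}$ and $\phi(X,Y)=\prod_{i=2}^n(X-\lambda_{(i)})^{\beta}X^{\beta(p-n+1)/2-1}e^{-X/2}$. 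Edge concentration for the beta-Laguerre ensemble (under $p/n\to\infty$ one has $\lambda_{(2)}/p\to\beta<x$) allows me to restrict to $\lambda_{(2)}<px$ at a super-exponentially small cost, so the lower limit becomes $px$. Computing the $X$-logarithmic derivative of $\phi(X,Y)^{2}e^{(x-\beta)(X-px)/(2x)}$ at $X=px$ gives $-(x-\beta)/(2x)+o(1)$ for typical $Y$, which is precisely the decay rate of the exponential sampler in Algorithm \ref{algorithm1}. A standard Laplace argument then produces
\begin{equation*}
E^Q[L_p^2]\sim(nA_n)^{2}\Big(\frac{2x}{x-\beta}\Big)^{2}E_Y\!\big[\phi(px,Y)^{2}\big].
\end{equation*}

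The core obstacle is evaluating $E_Y[\phi(px,Y)^{2}]$, which reduces to estimating $E_Y[\prod_{i=2}^n(px-\lambda_{(i)})^{2\beta}]$. This is structurally identical to a key step in the proof of Theorem \ref{theorem1}, but with every occurrence of $\beta$ in the eigenvalue product replaced by $2\beta$. I would repeat the three-step peeling argument developed for Lemma \ref{lemma0} and Theorem \ref{theorem1}: Taylor-expand $\log(px-\lambda_{(i)})$ around $p\beta$, use concentration of the linear statistic $\sum_i(\lambda_{(i)}/p-\beta)$ of the beta-Laguerre ensemble, and evaluate the (essentially Gaussian) moment-generating function that appears. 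The bookkeeping yields, under $p/n^{5/3}\to\infty$,
\begin{equation*}
E_Y\!\Big[\prod_{i=2}^n(px-\lambda_{(i)})^{2\beta}\Big]\sim\big(p(x-\beta)\big)^{2\beta(n-1)}\exp\!\Big\{-\frac{\beta^{3}n^{2}}{(x-\beta)^{2}p}\Big\},
\end{equation*}
exactly twice the exponent of the corresponding first-moment asymptotic driving Theorem \ref{theorem1}. Hence $E_Y[\phi(px,Y)^{2}]\sim(E_Y[\phi(px,Y)])^{2}$, and combining with Theorem \ref{theorem1} gives $E^Q[L_p^2]\sim P(\lambda_{(1)}>px)^{2}$. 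Under the weaker condition $n^{5/3}/p=O(1)$, the same peeling calculation yields $e^{O(1)}$ in place of $1+o(1)$, producing $E^Q[L_p^2]=O(1)\,P(\lambda_{(1)}>px)^{2}$ and strong efficiency.

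Finally I conclude the remaining two claims. Set $R=dP^{*}_{px}/dQ=L_p/P(\lambda_{(1)}>px)$ on $\{\lambda_{(1)}>px\}$; then $E^Q[R]=1$ and $E^Q[R^{2}]=E^Q[L_p^2]/P(\lambda_{(1)}>px)^{2}\to 1$ by the previous paragraph, so $\mathrm{Var}_Q(R)\to 0$. Cauchy--Schwarz gives $2\,d_{\mathrm{TV}}(P^{*}_{px},Q)=E^Q[|R-1|]\le\sqrt{\mathrm{Var}_Q(R)}\to 0$, establishing the total variation approximation. For the general regime $p/n\to\infty$, sharp asymptotics are unavailable but a parallel application of Theorem \ref{theorem1'} with $2\beta$ in place of $\beta$ in the peeling calculation yields $\log E^Q[L_p^2]=2B_{n,p,\beta}(x)+O(n^{5/2}/p^{3/2})$; combined with Theorem \ref{theorem1'} itself, and noting that $|B_{n,p,\beta}(x)|=\Theta(p)$ while $n^{5/2}/p^{3/2}=o(p)$ whenever $n/p\to 0$, this yields $\log E^Q[L_p^2]/(2\log P(\lambda_{(1)}>px))\to 1$, i.e., logarithmic efficiency.
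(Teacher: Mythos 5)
Your proposal is correct in its broad structure and arrives at the right conclusions via essentially the same strategy as the paper: reduce the $\lambda_{(1)}$-integral, estimate the residual $Y$-expectation by the peeling technique (Lemma~\ref{lemma1'} with $\gamma=2\beta$ rather than $\beta$), and recover total variation convergence from the $L^2$ bound via Cauchy--Schwarz. Your Laplace reduction of the $\lambda_1$-integral, with the nice observation that the exponential tilting rate $\frac{x-\beta}{2x}$ is exactly the logarithmic derivative of $\phi(X,Y)^2 e^{(x-\beta)(X-px)/(2x)}$ at $X=px$, is a clean way to organize what the paper does by explicit change of variables and the inequality $(\lambda_1+px)\le (px)e^{\lambda_1/px}$; the observation that $E_Y[\phi(px,Y)^2]\sim(E_Y[\phi(px,Y)])^2$ is a tidy way to see why the second moment matches the square of the first.

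The genuine gap is the claim that ``edge concentration \ldots allows me to restrict to $\lambda_{(2)}<px$ at a super-exponentially small cost.'' You cannot get this from the fact that $\lambda_{(2)}/p\to\beta$ in probability, because what must be shown is $E^Q[L_p^2;\,\lambda_{(2)}>px]=o(1)\,P(\lambda_{(1)}>px)^2$, and $P(\lambda_{(1)}>px)^2$ is itself exponentially small while the weight $L_p^2$ is enormous on $\{\lambda_{(2)}>px\}$. The same issue arises for the further restriction you implicitly need before the Taylor expansion of $\log(px-\lambda_{(i)})$ about $p\beta$ makes sense, namely $p(\beta-\delta_n)<\lambda_{(n)}<\lambda_{(2)}<p(\beta+\delta_n)$: without this your Laplace derivative computation is not uniform in $Y$ (the terms $\sum_i(px-\lambda_{(i)})^{-1}$ can blow up if $\lambda_{(2)}$ approaches $px$). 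This is exactly the content of the paper's Lemma~\ref{lemma3}, whose proof requires a delicate, weighted estimate (a second round of peeling on $\lambda_{(2)}$ or $\lambda_{(n)}$, and a precise count of how much the log of the contribution falls below $2B_{n,p,\beta}(x)$). Your proof needs to either cite an analogue of Lemma~\ref{lemma3} or supply the weighted tail estimate; the unweighted concentration of $\lambda_{(2)}$ alone is insufficient.

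Two smaller remarks. First, for the total variation claim the paper uses the same bound as you, $|Q(A)-P^*_{px}(A)|\le\bigl(E^Q[(dP^*_{px}/dQ)-1]^2\bigr)^{1/2}$, so your final step is fine. Second, for the logarithmic-efficiency regime $p/n\to\infty$, the paper actually uses the reduction $E^Q[L_p^2;\,\lambda_{(1)}>px]\sim E^Q[L_p^2;\,\lambda_{(1)}>px,\ p(\beta+\delta_n)>\lambda_{(2)},\ \lambda_{(n)}>p(\beta-\delta_n)]$, which again relies on Lemma~\ref{lemma3}; your parallel application of Theorem~\ref{theorem1'} is plausible but still needs that restriction to be justified.
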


\begin{remark}
Theorem \ref{theorem2} shows that the conditional distribution of
$(\lambda_1,$ $\cdots,$ $\lambda_n)$ given $\lambda_{(1)}>px$ essentially behaves like the proposed measure $Q$.
When $n$ is fixed and $p\to\infty$, a similar argument as in the proof of Theorem \ref{theorem2} gives that $E^Q\left[L_p^2\right]\sim P(\lambda_{(1)}>px)^2$ and the importance sampling estimate is strongly efficient.

It is conceived that the total variation distance between the proposed measure $Q$ and the conditional distribution converges to 0 for the general case when $p=\Theta(n^{1+\epsilon})$, $\epsilon>0$. As the discussion in Remark \ref{remark1}, this needs the calculation of the expectation of $\exp\{-\sum_{i=1}^n(\lambda_i/p-\beta)^k\}$ for $k\geq 3$, which we would like to investigate in the future.
\end{remark}

\begin{remark}
From the proof of Theorem \ref{theorem2}, we can see that the estimator is still asymptotically  efficient if in the second step of Algorithm \ref{algorithm1},  we sample $\lambda_{(1)}$ from an alternative  exponential distribution with density
$$
f(\lambda_{(1)}):=J_{\beta,x} e^{-J_{\beta,x} (\lambda_{(1)}-px\vee\lambda_{(2)})}\cdot I_{(\lambda_{(1)}>px\vee \lambda_{(2)})},$$
where the  rate  $J_{\beta,x} $ is some positive constant smaller than
$ (x-{\beta})/{x}.$ However, as shown in Theorem \ref{theorem2}, when $J_{\beta,x} =(x-\beta)/(2x)$, the change of measure approximates the conditional distribution given $\lambda_{(1)}>px$ in total variation when $p$ is large, and it is conceivable that this rate function yields more efficient results than others.
\end{remark}

\begin{remark}
The above results show that the estimator $L_p$ is asymptotically  efficient.
To estimate $P(\lambda_{(1)}>px)$, we simulate $N$ i.i.d.~copies of $L_{p}$, $\{ L^{(j)}_{p}: j=1,...,N\}$  and the final estimator is $Z_{p} = \frac 1 N \sum_{j=1}^{N} L_{p}^{(j)}$. To achieve the accuracy in \eqref{mse}, by the above theorem, at most
we need $N=\Theta(\varepsilon^{-2}\delta ^{-1})$ if $p/n^{5/3}\to\infty$ or $N=\Theta(\varepsilon^{-2}\delta ^{-1}P(\lambda_{(1)}>px)^{-\eta})$, for any $\eta>0$ and $p/n\to\infty$.
\end{remark}

\subsection{Efficient simulation for $P(\lambda_{(1)} > px)$ when $p/n\to\gamma\in [1,\infty)$}\label{Subsec:Sim2}
When $p/n\to\gamma\in [1,\infty)$,  from a direct application of Theorem 2.6.6   (Anderson et al, 2009),
$\lambda_{(1)}/n$ satisfies the large deviation principle in ${\mathbb R}$ with speed $n$ and good rate function
\begin{equation}\label{LDP}
I_\beta(x) =\left\{
\begin{array}{ll}
-\beta \int_{\mathbb{R}} \log|x-y| \sigma_\beta(dy) + \frac{1}{2}x - \frac{1}{2}\beta(\gamma-1)\log x+\alpha_\beta & \mbox{ if } x\geq  x^*;\\
\infty & \mbox{ if } x< x^*,
\end{array}
\right.
\end{equation}
where
$\alpha_\beta= \frac{\beta}{2}[(\gamma+1)(\log\beta-1)+ \gamma\log \gamma],$  and $\sigma_\beta$ is
the Marchenko-Pastur law \citep{marvcenko1967distribution} corresponding to  the empirical distribution of eigenvalues $(\lambda_1/n,\cdots,\lambda_n/n)$ with $x_*= \beta(\sqrt{\gamma}-1)^2$,   
$x^*=\beta(\sqrt{\gamma}+1)^2$; see also, e.g.,  \cite{hiai1998eigenvalue} and  \cite{dumitriu2003eigenvalue}
for more details.

We now consider the tail probability
 $P(\lambda_{(1)}/p>x) = P(\lambda_{(1)}/n> (p/n)x) $ for $\gamma x> x^*$. From the large deviation result, we know $P(\lambda_{(1)}>px)$ converges to 0 as $n\to\infty$. To construct an efficient estimator, the proposed algorithm in Section \ref{subsubsec:Sim} can not be directly applied and we need to modify the change of measure accordingly.
 
The new algorithm is given as follows: keep {\it Step 1} in the algorithm from Section \ref{subsubsec:Sim}. In {\it Step 2}, we sample $\lambda_{(1)}$ from the exponential distribution with density
$$
{f(\lambda_{(1)})=}J_{\beta,x} e^{-J_{\beta,x}\times (\lambda_{(1)}-px\vee\lambda_{(2)})}\cdot I_{(\lambda_{(1)}>px\vee \lambda_{(2)})},$$
where the rate  $J_{\beta,x} >0$ is chosen such that
\begin{equation}\label{rate}
J_{\beta,x} < 1-2\beta \int \frac{1}{\gamma x-y}\sigma_\beta(dy)-\beta\frac{\gamma-1}{\gamma x}.
\end{equation}

The quantity in the right hand side of \eqref{rate} is the derivative of the rate function $2I_\beta$ at $\gamma x>x^*$.  It is positive on $(x^*,\infty)$ due to the fact that  the rate function $I_\beta(x)$ is a convex function with positive second derivative on set $(x^*,\infty)$ and it achieves the minimum 0 at $x^*$ (Theorem 2.6.6, Anderson et al, 2009). Therefore, the constant $J_{\beta,x} $ is well defined.

{Let $\tilde Q$ be the measure induced by combining the above two-step sampling procedure on $(\lambda_{(1)},\cdots,\lambda_{(n)})$. It is defined on $[0, \infty)^n.$}
{By the same argument as in that between (\ref{july4}) and (\ref{boston1}),  we know the corresponding importance sampling estimate $\tilde L_p:=\frac{dP}{d\tilde Q}$ is given by}
\begin{eqnarray}\label{tildeLp}
\tilde L_{p}
&=&\frac{nA_n  \prod_{i=2}^n(\lambda_{(1)}-\lambda_{(i)})^{\beta}
\cdot \lambda_{(1)}^{\frac{\beta(p-n+1)}{2}-1}\cdot
e^{-\frac{1}{2} \lambda_{(1)}}}
{J_{\beta,x} e^{-J_{\beta,x} (\lambda_{(1)}-px\vee\lambda_{(2)})}\cdot I_{(\lambda_{(1)}>px\vee \lambda_{(2)})}}1_{(\lambda_{(1)}>px)}.
\end{eqnarray}
We have the following efficiency result for $\tilde L_p$:
\begin{theorem}\label{Thmconstant}
If $p/n\to\gamma\in [1,\infty)$ then
 the importance sampling estimate $\tilde L_p$ is asymptotically  efficient.
\end{theorem}

\begin{remark}
To achieve strong efficiency results as in Theorem \ref{theorem2}, we need to derive a more accurate approximation of the tail probability $P(\lambda_{(1)}>px)$ as in Theorem \ref{theorem1}. However, the techniques developed in this paper may not be directly applicable, though some of the derived approximation bounds in the auxiliary lemmas (such as bounds in the proof of Lemma 5) can be generalized to the case of $p/n\to\gamma\in [1,\infty)$.
We leave this as a future work. \end{remark}

\subsection{ Efficient simulation for $\lambda_{(n)}$}\label{Subsec:Min}
Recall that $\lambda_{(n)}$ is the smallest eigenvalue of the $\beta$-Laguerre ensemble defined as in (\ref{orderdensity}).
In this section we focus on the probability
$P(\lambda_{(n)} < py) \mbox{ as } p\to\infty$
for any $\beta>0$ and $0<y<\beta$.
We have the following approximation results similar to Theorem \ref{theorem1}.
Their proofs  follow from analogous arguments as in those for $\lambda_{(1)}$ and therefore are omitted.
\begin{theorem}\label{theoremmin}
For $0<y<\beta$ the following hold as $n\to\infty$.\\
(1). If $p/n\to\infty,$
$\log P(\lambda_{(n)} < py)
=B_{n,p,\beta}(y)+ O(1){ n^{5/2}}{p^{-3/2}},$
where $B_{n,p,\beta}(y)$ is defined as in Theorem \ref{theorem1}.\\
(2). If  $p/n^{5/3}=\Theta(1),$
$\log P(\lambda_{(n)} < py)= B_{n,p,\beta}(y)+ O(1)$.\\
(3). If $p/n^{5/3}\rightarrow\infty$,
$P(\lambda_{(n)} < py) \sim \exp(B_{n,p,\beta}(y)).$
\end{theorem}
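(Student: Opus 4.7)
The plan is to mirror the three-step peeling argument used for $\lambda_{(1)}$ in Lemma~\ref{lemma0} and Theorem~\ref{theorem1}, but peel off the smallest eigenvalue $\lambda_n$ in place of the largest. The key algebraic identity is unchanged: because $\beta(p-n+1)/2-1$ is invariant under $(n,p)\mapsto(n-1,p-1)$, the analogue of~\eqref{boston1} for the smallest eigenvalue reads
\[
g_{n,p,\beta}(\lambda_1,\ldots,\lambda_n)=I_{(\lambda_1>\cdots>\lambda_n)}\cdot nA_n\prod_{i=1}^{n-1}(\lambda_i-\lambda_n)^{\beta}\cdot\lambda_n^{\beta(p-n+1)/2-1}e^{-\lambda_n/2}\cdot g_{n-1,p-1,\beta}(\lambda_1,\ldots,\lambda_{n-1}).
\]
Let $\mu_1>\cdots>\mu_{n-1}$ denote the eigenvalues under the reduced $(n-1,p-1,\beta)$-ensemble. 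Integrating out $\lambda_n$ first, and noting that $\mu_{n-1}>py$ with probability $1-o(1)$ when $y<\beta$ (by chi-square concentration applied to the bidiagonal model), reduces $P(\lambda_{(n)}<py)$, up to negligible terms, to
\[
nA_n\cdot E_{n-1,p-1}\Big[\int_{0}^{py}\prod_{i=1}^{n-1}(\mu_i-t)^{\beta}\,t^{\beta(p-n+1)/2-1}e^{-t/2}\,dt\Big].
\]

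I would next perform a Laplace analysis of the inner integral at its upper endpoint. Writing $\phi(t):=a\log t-t/2+\beta\sum_{i}\log(\mu_i-t)$ with $a=\beta(p-n+1)/2-1$, one verifies that on the high-probability event where each $\mu_i/p$ is close to $\beta$, the derivative $\phi'(py)$ is asymptotically $(\beta-y)/(2y)>0$, so $e^{\phi(t)}$ is strictly increasing at $t=py$. A single integration by parts then yields
\[
\int_{0}^{py}e^{\phi(t)}\,dt\sim\frac{2y}{\beta-y}\cdot(py)^{\beta(p-n+1)/2-1}e^{-py/2}\prod_{i=1}^{n-1}(\mu_i-py)^{\beta},
\]
which is the lower-edge counterpart of the expression in Theorem~\ref{theorem1} under the correspondences $x\leftrightarrow y$ and $(x-\beta)\leftrightarrow(\beta-y)$. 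Taking $E_{n-1,p-1}$, I would replace $\prod_i(\mu_i-py)^{\beta}$ by $(p\beta-py)^{\beta(n-1)}$ times the fluctuation factor coming from the Taylor expansion $\log(\mu_i-py)=\log(p\beta-py)+\log\bigl(1+(\mu_i-p\beta)/(p\beta-py)\bigr)$; the quadratic term in $(\mu_i/p-\beta)$ reproduces exactly the $-\beta^{3}n^{2}/\{2(\beta-y)^{2}p\}$ correction in $B_{n,p,\beta}(y)$, and Stirling applied to $A_n$ assembles the remaining explicit constants.

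The three regimes of the theorem correspond to how accurately the higher-order Taylor remainders $\sum_i(\mu_i/p-\beta)^{k}$, $k\ge 3$, can be controlled: for $p/n\to\infty$ they contribute the $O(n^{5/2}/p^{3/2})$ slack yielding~(1); for $p/n^{5/3}=\Theta(1)$ they collapse to $O(1)$, giving~(2); for $p/n^{5/3}\to\infty$ they are $o(1)$ and one obtains the sharp $\sim$ in~(3). All the chi-square moment estimates in Section~\ref{prooflemma} are symmetric under the swap of the upper and lower edges of the spectrum and transfer verbatim after substituting $(\beta-\mu_i/p)$ for $(\lambda_i/p-\beta)$. The main obstacle I expect is in making the Laplace expansion uniform over a high-probability configuration of $(\mu_1,\ldots,\mu_{n-1})$: one has to show that $\phi'(t)$ stays bounded below by a strictly positive constant on a neighbourhood of $py$, and that $\int_{0}^{py-\delta_n}e^{\phi(t)}\,dt$ is negligible for a suitable $\delta_n$, both of which follow from the edge-concentration estimates already used in the proof of Theorem~\ref{theorem1}.
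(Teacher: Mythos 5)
Your proposal is correct and takes essentially the same approach the paper intends: the paper omits the proof of Theorem~\ref{theoremmin}, stating only that it follows from arguments analogous to those for $\lambda_{(1)}$, and your write-up correctly identifies the ingredients of that analogy — the factorization peeling off $\lambda_n$ (valid because $\beta(p-n+1)/2-1$ is invariant under $(n,p)\mapsto(n-1,p-1)$), the localization $\lambda_n\in(p(y-\delta_n),py)$ together with $\mu_i\approx p\beta$ via Lemma~\ref{lemma0}-type peeling, the Laplace estimate of the $\lambda_n$-integral yielding the rate $(\beta-y)/(2y)$ in place of $(x-\beta)/(2x)$, and the Taylor expansion of $\log(\mu_i-py)$ feeding into Lemma~\ref{lemma1'} (with the sign of the linear term flipped, which is harmless since only the quadratic term survives) to produce the $-\beta^3n^2/\{2(\beta-y)^2p\}$ correction and the three error regimes.
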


Since  $\lambda_{max}/p$ and $\lambda_{min}/p$ are all positive, it can be seen from Theorems \ref{theorem1} and \ref{theoremmin} that the two rate functions on $(0, \infty)$ look ``symmetric" with respect to the line $x=\beta.$
The dominant term in the above expression of $B_{n,p,\beta}(y)$ is $p(\frac{\beta}{2}-\frac{\beta}{2}\log\beta-\frac{y}{2}+\frac{\beta}{2}\log y)$, which is negative if $0<y<\beta$ and thus $\exp(B_{n,p,\beta}(y))$  is no more than 1.
In addition, this gives the same exponential decay rate for $\lambda_{(n)}$ as found in Jiang and Li (2014).

To obtain an efficient Monte Carlo estimator of $P(\lambda_{(n)} < py)$, we propose an importance sampling procedure similar to that for the largest eigenvalue $\lambda_{(1)}$.
\begin{itemize}
\item[Step 1.] Generate matrix
$\LL_{n-1,p-1,\beta}$. Calculate the corresponding eigenvalues $(\lambda_{1},\cdots,\lambda_{n-1})$ of $\LL_{n-1,p-1,\beta}$ and the order statistics $\lambda_{(1)}>\cdots>\lambda_{(n-1)}$.
\item[Step 2.] Conditional on $(\lambda_{(1)},\cdots,\lambda_{(n-1)})$, sample $\lambda_{(n)}$ from the distribution with density
$$
f(\lambda_{(n)}):=\frac{\beta-y}{2y} e^{\frac{\beta-y}{2y} (\lambda_{(n)}-py\wedge\lambda_{(n-1)})}\cdot I_{(\lambda_{(n)}<py\wedge \lambda_{(n-1)})}.$$
\end{itemize}

The importance sampling estimator $L_p$ can be written as
\begin{eqnarray*}
L_{p}
&=&\frac{nA_n  \prod_{1\leq i< n}(\lambda_{(i)}-\lambda_{(n)})^{\beta}
\cdot \lambda_{(n)}^{\frac{\beta(p-n+1)}{2}-1}\cdot
e^{-\frac{1}{2} \lambda_{(n)}}}{\frac{\beta-y}{2y} e^{\frac{\beta-y}{2y} (\lambda_{(n)}-py\wedge\lambda_{(n-1)})}\cdot I_{(\lambda_{(n)}<py\wedge \lambda_{(n-1)})}}I_{(\lambda_{(n)} < py)},
\end{eqnarray*}
The efficiency of the above importance sampling estimator is stated in the next theorem.
\begin{theorem}\label{theorem2min}
Assume $0<y<\beta$ and $n\to\infty$. We have \\
(1). if $p/n^{5/3}\rightarrow\infty$,
$E^Q\left[L_p^2\right]\sim P(\lambda_{(n)} < py)^2$;\\
(2). if $p/n^{5/3}=\Theta(1)$, $E^Q\left[L_p^2\right]=O(1) P(\lambda_{(n)} < py)^2$;\\
(3). if $p/n\to\infty$,  $L_p$ is asymptotically efficient.
\end{theorem}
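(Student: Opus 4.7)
The plan is to mirror the proof of Theorem \ref{theorem2}, replacing the ``peeling'' of $\lambda_{(1)}$ by an analogous peeling of $\lambda_{(n)}$. The first step is to factor the joint density as
\begin{align*}
g_{n,p,\beta}(\lambda_1,\ldots,\lambda_n) &= I_{(\lambda_1>\cdots>\lambda_n)}\cdot nA_n\prod_{i=1}^{n-1}(\lambda_i-\lambda_n)^{\beta}\cdot \lambda_n^{\frac{\beta(p-n+1)}{2}-1}e^{-\lambda_n/2}\\
&\quad\times g_{n-1,p-1,\beta}(\lambda_1,\ldots,\lambda_{n-1}),
\end{align*}
which is the exact analogue of (\ref{boston1}). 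The same constant $A_n$ appears because the exponent $\tfrac{\beta((p-1)-(n-1)+1)}{2}-1$ of the $(n-1,p-1,\beta)$-ensemble coincides with $\tfrac{\beta(p-n+1)}{2}-1$. Combined with the two-step sampler for $\lambda_{(n)}$ described in Section \ref{Subsec:Min}, this gives an explicit Radon-Nikodym derivative $dP/dQ$ supported on $\{\lambda_{(n)}<py\wedge\lambda_{(n-1)}\}$.

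Second, I would compute
$$E^Q[L_p^2]=E^P\!\left[\tfrac{dP}{dQ}\cdot I_{(\lambda_{(n)}<py)}\right]=\frac{2y\,nA_n}{\beta-y}\,E^P\!\left[\prod_{i=1}^{n-1}(\lambda_{(i)}-\lambda_{(n)})^{\beta}\lambda_{(n)}^{\frac{\beta(p-n+1)}{2}-1}e^{-\frac{\lambda_{(n)}}{2}-\frac{\beta-y}{2y}(\lambda_{(n)}-py\wedge\lambda_{(n-1)})}I_{(\lambda_{(n)}<py)}\right].$$
The rate $(\beta-y)/(2y)$ is chosen precisely to cancel the leading logarithmic derivative of $\lambda^{\frac{\beta(p-n+1)}{2}-1}e^{-\lambda/2}$ at $\lambda=py$, so that the integral localizes around $\lambda_{(n)}=py$ and around the bulk configuration $\lambda_{(i)}\approx\beta p$ for $i<n$. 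A Laplace-type expansion on this localization, together with the Selberg-type identities that underlie Theorem \ref{theoremmin} (the symmetric counterpart of Theorem \ref{theorem1}), delivers the sharp asymptotic $E^Q[L_p^2]\sim \exp(2B_{n,p,\beta}(y))\sim P(\lambda_{(n)}<py)^2$ when $p/n^{5/3}\to\infty$, giving parts (1) and (2). For part (3) the Laplace asymptotic is replaced by the coarser logarithmic bound of Theorem \ref{theoremmin}(1): this yields $\log E^Q[L_p^2]=2B_{n,p,\beta}(y)+O(n^{5/2}/p^{3/2})$, which matches $2\log P(\lambda_{(n)}<py)$ to leading order and hence gives logarithmic efficiency.

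The main obstacle is controlling the integral over $\lambda_{(n)}\in(0,py]$: because $\lambda_{(n)}^{\frac{\beta(p-n+1)}{2}-1}\approx\lambda_{(n)}^{\beta p/2}$ is a near-$p$-th power, even an $O(1)$ deviation of $\lambda_{(n)}$ below $py$ produces a substantial exponential correction, and the mass near $\lambda_{(n)}\to 0$ must be shown not to dominate. I plan to handle this by Taylor expanding the combined log-integrand in $\lambda_{(n)}$ around $py$: the second-order term provides Gaussian-type damping on an $O(1)$ scale, which localizes the integral near the upper endpoint and legitimates the leading-order expansion. The Vandermonde factor $\prod_{i<n}(\lambda_{(i)}-\lambda_{(n)})^{\beta}$ is comparatively benign, since $\lambda_{(i)}\approx\beta p\gg py$ keeps the differences bounded away from zero and contributes $(p(\beta-y))^{\beta(n-1)}(1+o(1))$ uniformly on the concentration event. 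Together with the tail normalization supplied by Theorem \ref{theoremmin}, these estimates close the argument.
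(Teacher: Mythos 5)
Your overall plan --- peel off $\lambda_{(n)}$ via the factorization
\[
g_{n,p,\beta}(\lambda_1,\ldots,\lambda_n)=I_{(\lambda_1>\cdots>\lambda_n)}\,nA_n\prod_{i=1}^{n-1}(\lambda_i-\lambda_n)^{\beta}\,\lambda_n^{\frac{\beta(p-n+1)}{2}-1}e^{-\lambda_n/2}\,g_{n-1,p-1,\beta}(\lambda_1,\ldots,\lambda_{n-1}),
\]
localize $\lambda_{(n)}$ near $py$ via the exponential tilt, localize the remaining eigenvalues near $p\beta$, and then compare against the asymptotic of $P(\lambda_{(n)}<py)$ from Theorem~\ref{theoremmin} --- is exactly the route the paper intends (the proof is omitted as ``analogous'' to Theorem~\ref{theorem2}), and the factorization and the identity $E^Q[L_p^2]=E^P[(dP/dQ)I_{(\lambda_{(n)}<py)}]$ are both correct.

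However, there is a real gap in the way you treat the Vandermonde factor. You assert that $\prod_{i<n}(\lambda_{(i)}-\lambda_{(n)})^{\beta}$ ``contributes $(p(\beta-y))^{\beta(n-1)}(1+o(1))$ uniformly on the concentration event.'' This is false pointwise: on the event $\{|\lambda_{(i)}-\beta p|<p\delta_n\ \forall i<n\}$, each ratio $(\lambda_{(i)}-\lambda_{(n)})/(p(\beta-y))$ is only $1+O(\delta_n)$, so the product can be as large as $e^{O(n\delta_n)}$, and under $p/n^{5/3}\to\infty$ the admissible $\delta_n$ has $n\delta_n\to\infty$ (indeed $n^2/p$ itself can diverge, e.g.\ $p=n^{1.8}$). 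The correct accounting is exactly what the proof of Theorem~\ref{theorem2} does: write $\lambda_{(i)}-\lambda_{(n)}=p(\beta-y)\bigl(1+\tfrac{\lambda_{(i)}-p\beta}{p(\beta-y)}-\tfrac{\lambda_{(n)}-py}{p(\beta-y)}\bigr)$, Taylor-expand the logarithm to \emph{second} order via Lemma~\ref{lemmabound}, and then compute $E\exp\{-\gamma\sum_i\tfrac{\lambda_i-p\beta}{p(\beta-y)}-\alpha\gamma\sum_i(\tfrac{\lambda_i-p\beta}{p(\beta-y)})^2\}$ using the bidiagonal (chi-square) representation of the $(n-1,p-1,\beta)$ ensemble, which is the content of Lemma~\ref{lemma1'}. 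This produces the nontrivial factor $\exp\{-\tfrac{\beta^3n^2}{(\beta-y)^2p}\bigl(1+o(1)\bigr)\}$ in the second moment --- precisely what is needed to match the $-\tfrac{\beta^3n^2}{2(\beta-y)^2p}$ term inside $B_{n,p,\beta}(y)$ squared. Without the second-order term you would only bound the second moment up to $e^{O(n\delta_n)}$, which is too lossy for parts (1)--(2) and, in the regime where $n^2/p$ is unbounded, would not even deliver the required ratio $\sim 1$.

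One small terminological point: the estimates behind Theorems~\ref{theorem1} and \ref{theoremmin} do not rest on Selberg-type identities but on the Dumitriu--Edelman bidiagonal matrix model, which converts $\sum_i\lambda_i$ and $\sum_i(\lambda_i-p\beta)^2$ into sums of independent chi-square functionals; this is what makes the moment-generating-function computation in Lemma~\ref{lemma1'} tractable. You will also want the $\lambda_{(n)}$-analogues of Lemmas~\ref{lemma20}, \ref{lemma0} and \ref{lemma3} (concentration of $\lambda_{(n)}$ near $py$ and of the other eigenvalues near $p\beta$ given $\lambda_{(n)}<py$) to justify restricting the integral to the concentration event; your appeal to Gaussian damping from a one-variable Taylor expansion in $\lambda_{(n)}$ handles the $\lambda_{(n)}$ direction but not the companion step of discarding configurations where some $\lambda_{(i)}$, $i<n$, escapes the band.

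Once the Vandermonde expansion is carried to second order and the companion concentration lemmas are invoked, the rest of your outline --- parts (1) and (2) from the sharp asymptotic, part (3) from the $O(n^{5/2}/p^{3/2})$ bound plus the H\"older lower bound $E^Q[L_p^2]\ge P(\lambda_{(n)}<py)^2$ --- is correct and matches the paper's method.
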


\section{Numerical Study}\label{simulation}
\subsection{Simulation study}
In order to evaluate the actual performance of our algorithms we conduct a numerical study over different $p$ and $n$ values.
We take $\beta=1$ and choose six combinations of $n$ and $p$: $(n,p)=(10, 10^2),$ $(10, 10^3)$, $(10, 10^4)$,  $(50, 10^2)$, $(50, 10^3)$ and $(50, 10^4)$.  We follow Algorithm \ref{algorithm1} to estimate $P(\lambda_{(1)}>px)$ for different values of $x$'s.
 For $(n,p)=(10,100)$ and $(50,100)$, the algorithm in Section \ref{Subsec:Sim2} gives similar results and therefore are not presented.
 Based on the simulation results,
 we would suggest use Algorithm \ref{algorithm1} in practice for
 $p/n\to \gamma\in[1,\infty]$.

Tables \ref{t1} and \ref{t2} show estimated tail probabilities (column ``Est'') along with the estimated standard deviations $Std(L_p)=\sqrt{Var^Q(L_p)}$ (column ``Std'').
The simulation results are based on $10^4$ independent simulations and it takes just a few seconds in the statistical software ``R'' for each case.
 Note that the standard deviation of the final estimate (in the column ``Est.'') is the reported standard deviation (in the column of ``Std.'') divided by $\sqrt{10^4}=100$.

\begin{table}[h!]{\small
\caption{Estimates of $P(\lambda_{(1)}>px)$ for $n=10$ and $p=100, 1000, 10000$. The standard deviation of the estimate ``Est." is Std./100.}
\begin{center}
$n=10,p=100$
\vspace{0.1cm}

\begin{tabular}{|l|c|c|c|c|c|c|c|}
\hline
$x$&Est.&Std.&Std./Est.&TA  &TW& DMC\\
\hline
 $1.9$ & 1.01e-02 & 6.91e-03 & 0.69 &5.89e-03 &1.34e-02(1.06e-02) &1.00e-02\\
  $2.0$  &  1.71e-03 & 9.95e-04 & 0.58 & 1.07e-03  & 2.20e-03(1.68e-03) &1.74e-03 \\
 $2.1$  &  2.31e-04 & 1.14e-04 & 0.49 & 1.55e-04  & 2.79e-04(2.06e-04) &2.41e-04 \\
  $2.5$  & 1.64e-08 & 5.43e-09 & 0.33 &1.29e-08 &-&-\\
 $3$  &  9.24e-15 & 2.16e-15 & 0.23  & 7.91e-15  &-&-\\
  $4$  &  2.21e-29 & 3.33e-30 & 0.15  & 2.03e-29  &-&-\\
\hline
\end{tabular}

\vspace{0.25cm}
$n=10,p=1000$
\vspace{0.1cm}

\begin{tabular}{|l|c|c|c|c|c|c|c|}
\hline
$x$&Est.&Std.&Std./Est.&TA  &TW& DMC\\
\hline
 $1.25$ & 1.16e-02 & 9.08e-03 & 0.78 &6.81e-03 &1.75e-02(1.28e-02) &1.16e-02\\
  $1.28$  &  1.19e-03 & 7.65e-04 & 0.64 & 7.70e-04  & 2.04e-03(1.42e-03) &1.22e-03 \\
 $1.3$  &  2.07e-04 & 1.22e-04 & 0.59 & 1.43e-04  & 3.93e-04(2.66e-04) &2.02e-04 \\
 $1.4$  &  3.88e-09 & 1.50e-09 & 0.39 & 3.11e-09 &-&-\\
  $1.5$  & 3.26e-15 & 9.56e-16 & 0.29 &2.82e-15 &-&-\\
 $2.0$  &  1.88e-59 & 2.59e-60 & 0.14 &1.81e-59  &-&-\\
\hline
\end{tabular}

\vspace{0.25cm}
$n=10,p=10000$
\vspace{0.1cm}

%
\begin{tabular}{|l|c|c|c|c|c|c|c|}
\hline
$x$&Est.&Std.&Std./Est.&TA  &TW& DMC\\
\hline
 $1.07$ & 4.26e-02 & 4.17e-02 & 0.97   &2.32e-02 & 6.18e-02(4.58e-02) &4.29e-02\\
 $1.08$  &  4.05e-03 & 3.03e-03 & 0.75 & 2.52e-03  &6.98e-03(4.84e-03) &4.02e-03 \\
 $1.09$  &  2.15e-04 & 1.26e-04 & 0.59 & 1.45e-04 & 4.69e-04(3.10e-04) & 2.18e-04\\
  $1.10$  &  6.49e-06 & 3.41e-06 & 0.52 & 4.76e-06 &1.84e-05(1.11e-05)& 5 e-06\\
  $1.15$  &  1.36e-16 & 4.19e-17 & 0.31 & 1.18e-16 &-&-\\
 $1.20$  &  8.15e-32 & 1.81e-32 & 0.22  & 7.49e-32  &-&-\\
\hline
\end{tabular}
 \end{center}
\label{t1}}
\end{table}

\begin{table}[h!]
{\small
\caption{Estimates of $P(\lambda_{(1)}>px)$ for $n=50$ and $p=100, 1000, 10000$. }
\begin{center}
$n=50,p=100$
\vspace{0.1cm}

\begin{tabular}{|l|c|c|c|c|c|c|c|}
\hline
$x$&Est.&Std.&Std./Est.&TA  &TW& DMC\\
\hline
 $3$ &  4.74e-02  & 1.16e-01 & 2.46 & 4.57e-02 &5.07e-02(4.84e-02) &4.77e-02\\
  $3.25$  &  9.15e-04 & 1.44e-03 & 1.58 & 7.22e-04 &8.94e-04(8.36e-04) &9.01e-04 \\
  $3.5$  &  6.02e-06 & 5.56e-06 & 0.92 & 4.47e-06 &2.38e-06(2.03e-06)&-\\
  $4$  &  3.09e-11 & 1.94e-11 & 0.63 & 2.18e-11 &-&-\\
 $5$  &  3.05e-24 & 1.17e-24 & 0.39 &2.27e-24  &-&-\\
\hline
\end{tabular}
%

\vspace{0.25cm}
$n=50,p=1000$
\vspace{0.1cm}

\begin{tabular}{|l|c|c|c|c|c|c|c|}
\hline
$x$&Est.&Std.&Std./Est.&TA  &TW& DMC\\
\hline
 $1.55$ &  3.51e-03  & 6.57e-03 & 1.87 & 4.43e-03 &4.30e-03(3.60e-03) &3.46e-03\\
  $1.57$  &  5.81e-04 & 8.43e-04 & 1.45 & 6.60e-04 &7.20e-04(5.91e-04) &5.77e-04 \\
  $1.6$  &  2.74e-05 & 3.14e-05 & 1.15 & 2.79e-05 &3.25e-05(2.56e-05)&2.0 e-05\\
  $1.7$  &  8.52e-11 & 6.63e-11 & 0.78 & 7.31e-11 &-&-\\
 $2.0$  &  2.32e-34 & 8.90e-35 & 0.38 &1.91e-34  &-&-\\
\hline
\end{tabular}
%
%

\vspace{0.25cm}
$n=50,p=10000$
\vspace{0.1cm}

\begin{tabular}{|l|c|c|c|c|c|c|c|}
\hline
$x$&Est.&Std.&Std./Est.&TA  &TW& DMC\\
\hline
 $1.155$ &  1.75e-02& 3.68e-02  & 2.11 & 3.21e-02 &2.25e-02(1.86e-02) &1.77e-02\\
  $1.16$  &  3.99e-03 & 7.37e-03 & 1.84 & 6.09e-03 &5.22e-03(4.20e-03) &3.87e-03 \\
  $1.17$  &  1.06e-04 & 1.50e-04 & 1.41 & 1.33e-04 &1.57e-04(1.20e-04)&1.02e-04\\
 $1.20$   & 4.22e-11 & 3.62e-11 & 0.86 &3.95e-11  &-&-\\
  $1.25$   &  2.75e-26 & 1.47e-26 & 0.53& 2.36e-26  &-&-\\
\hline
\end{tabular}
 \end{center}
\label{t2}}
\end{table}

To validate our importance sampling results we compute direct Monte Carlo estimates based on $10^6$ independent simulations (column ``DMC''). Note that this validation is not feasible for all probabilities considered. We also present the results from asymptotic approximation methods. The tail probability approximations from Theorem \ref{theorem1} are presented in the column ``TA'' and the approximation results based on the Tracy-Widom distribution are given in the column ``TW''.  The tail probabilities of the Tracy-Widom distribution are calculated using R package {\it ``RMTstat''} \citep*{johnstone2010rmtstat}. In particular,
it is known that when $\beta=1$
$$\frac{\lambda_{(1)}-\mu_{n,p}}{\sigma_{n,p}}$$ converges to the Tracy-Widom law  \citep{johnstone2001,el2003largest}, where
$$\mu_{n,p} = (\sqrt{n}+\sqrt{p-1})^2,\quad \sigma_{n,p}=(\sqrt{n}+\sqrt{p-1})\Big(\frac{1}{\sqrt{n}}+\frac{1}{\sqrt{p-1}}\Big)^{1/3}.$$
A more accurate approximation has been proposed in \cite{johnstone2012fast} and \cite{ma2012accuracy}, where
\begin{eqnarray*}
\mu_{n,p} &=& \Big(\sqrt{n-\frac{1}{2}}+\sqrt{p-\frac{1}{2}}\Big)^2,\\
\sigma_{n,p} &=& \Big(\sqrt{n-\frac{1}{2}}+\sqrt{p-\frac{1}{2}}\Big)\Big(\frac{1}{\sqrt{n-\frac{1}{2}}}+\frac{1}{\sqrt{p-\frac{1}{2}}}\Big)^{1/3}.
\end{eqnarray*}
We report both approximation results in the column ``TW'' with the second in the parentheses.

From Tables \ref{t1} and \ref{t2}, we can see that the proposed importance sampling estimates (``Est'') are consistent with those from direct Monte Carlo simulation (``DMC''). The ratios between the estimated standard deviations of $L_p$ and the estimated tail probabilities (``Std/Est'') stay reasonably small, indicating the efficiency of the algorithm (see equations \eqref{logeff} and  \eqref{strongeff}).
The ratio becomes smaller as $x$ increases. This implies that the algorithm is more efficient for larger $x$'s values.
Moreover, the proposed method provides an efficient way to evaluate the performance of the theoretical approximation methods.
In particular, we can see that the approximations based on the Tracy-Widom distribution (``TW'') overestimate the tail probabilities, especially for larger $x$'s values.
In addition,  larger estimation (relative) errors can be observed for $n=10$ than  for $n=50$.
For the tail approximations (``TA''), we can see they do not give accurate estimates for smaller $x$'s values while the performance gets better as $x$ increases.
Overall, the importance sampling method outperforms results based on the Tracy-Widom distribution  and the tail probability approximations. Lastly it should be noted that the approximations based on Tracy-Widom distribution and the direct Monte Carlo approach are not suitable for estimating the probability of extremely rare events. For these events the only possibility is to use the methods developed in the current work, i.e. importance sampling or the the tail approximation.
R code of the proposed importance sampling algorithm can be found at \url{http://users.stat.umn.edu/~xuxxx360/IS.R}.


\begin{table}[h!]{\small
\caption{Estimates of $P(\lambda_{(1)}>px)$ for $n=10$ and $p=100, 1000, 10000$. }
\begin{center}
$n=10,p=100$
\vspace{0.1cm}

\begin{tabular}{|l|c|c|c|c|c|c|c|c|}
\hline
$x$&Est. &TW &  $B_{(5, .5)}$ &  $B_{(10, .5)}$&$B_{(20, .5)}$& $t_{50}$ &  $t_{100}$  \\
\hline
 $1.9$ & 1.01e-2  &1.34(1.06)e-2  & 0.57e-2 & 0.81e-2 & 0.88e-2  & 1.15e-2 & 1.08e-2 \\
  $2.0$  &  1.71e-3 & 2.20(1.68)e-3 & 0.72e-3 & 1.27e-3 & 1.45e-3  & 1.93e-3 & 1.76e-3\\
\hline
\end{tabular}

\vspace{0.25cm}
$n=10,p=1000$
\vspace{0.1cm}

\begin{tabular}{|l|c|c|c|c|c|c|c|}
\hline
$x$&Est. &TW &  $B_{(5, .5)}$ &  $B_{(10, .5)}$&$B_{(20, .5)}$& $t_{50}$ &  $t_{100}$  \\
\hline
 $1.25$ & 1.16e-2 &1.75(1.28)e-2  & 0.72e-2& 1.00e-2 & 1.10e-2  & 1.36e-2 & 1.22e-2\\
  $1.28$  &  1.19e-3  & 2.04(1.42)e-3 & 0.55e-3 & 0.94e-3 & 1.13e-3  &1.61e-3 &1.20e-3\\
\hline
\end{tabular}

\vspace{0.25cm}
$n=10,p=10000$
\vspace{0.1cm}

\begin{tabular}{|l|c|c|c|c|c|c|c|c|}
\hline
$x$&Est. &TW &  $B_{(5, .5)}$ &  $B_{(10, .5)}$&$B_{(20, .5)}$& $t_{50}$ &  $t_{100}$  \\
\hline
 $1.07$ & 4.26e-2  & 6.18(4.58)e-2 & 3.04e-2& 3.65e-2 &3.96e-2 &4.85e-2 & 4.49e-2\\
 $1.08$  &  4.05e-3&6.98(4.84)e-3 & 2.16e-3& 3.20e-3 & 3.61e-3 & 4.77e-2 &4.25e-3\\
\hline
\end{tabular}
 \end{center}
\label{t3}}
\end{table}

\subsection{Non-Gaussian Matrices}
We next investigate the behavior of the algorithm and approximations in the non-Gaussian setting.
In particular, we generate the matrix $\XX^*\XX$ where $\XX=(x_{ij})_{n\times p}$ with $n=10,$ $p=100, 1000$ and $10000$, and $x_{ij}$ are i.i.d.  random variables following a standardized Binomial or t-distribution with mean zero and variance one.
Table \ref{t3} presents the simulation results, where  columns ``Est'' is  the importance sampling estimates under the Gaussian assumption as in Table \ref{t1}, ``TW'' is the Tracy-Widom estimates, and the last five columns are direct Monte Carlo results under different standardized distributions with  $10^5$ replications.
Table \ref{t3} shows that the importance sampling estimators are generally  comparable to Tracy-Widom  estimators.
Moreover, as the distribution of $x_{1,1}$ becomes more like the normal distribution
(such as when the number of trials of a Binomial distribution increases or the degrees of freedom of a t-distribution increases),
the importance sampling estimators become more accurate and outperform Tracy-Widom estimators.

\cite{bordenave2014large} studied the large deviations properties of the spectrum of Wigner matrices whose entries were random variables with density proportional to $e^{-|x|^{\alpha}}$ for all $x\in \mathbb{R}$ with parameter $\alpha\in (0,2)$. Interestingly they observed that both the speed and the rate function of the large deviations principle depends on the parameter $\alpha$.
Although the Wigner matrices and the Wishart matrices belong to different ensembles, their large deviation principles are of similar structures; see, e.g., Anderson et al. (2010).  Therefore we do not expect a big universality family for our theoretical approximation results. However, as illustrated  in the above simulation, the importance sampling estimator based on the Gaussian assumption will provide an adequate approximation for many cases.


\begin{figure}[h]
\includegraphics[width=12cm]
{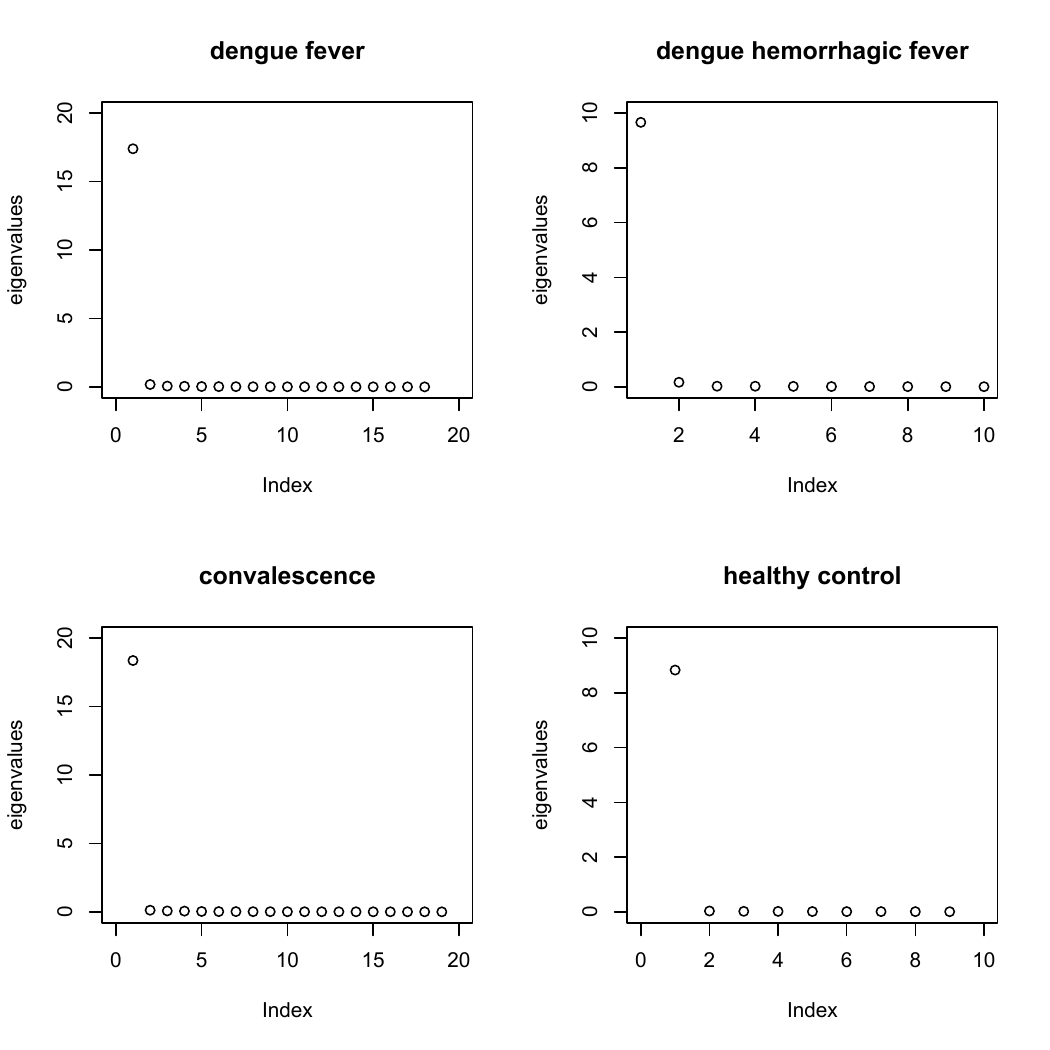}
\caption{Plot of eigenvalues}
\label{figure1}
\end{figure}

\subsection{Dengue Virus Example}
To illustrate the use of our algorithm we consider a real data set of immunity to the Dengue virus (DENV). 
The data set contains the innate immune response to DENV infection in whole blood samples of acutely infected humans in Bangkok, Thailand during the season of 2009 \citep{kwissa2014dengue}.
The data set can be downloaded from
\url{http://www.ncbi.nlm.nih.gov/geo/query/acc.cgi?acc=GSE51808}.
Whole blood samples were analyzed from 18 dengue fever patients and 10 dengue hemorrhagic fever patients hospitalized at the Siriraj Hospital in Bangkok, Thailand, and the samples were obtained between days 2 and 9 after onset of symptoms.
Blood samples from 19 convalescence patients were also obtained  at 4 weeks or later after discharge. In addition, there is a  control group of 9 healthy donors, and their blood was also sampled.

We consider data from  four groups: 18 dengue fever patients; 10 dengue hemorrhagic fever patients; 19 convalescence patients; and 9 healthy control patients. Each individual has $p=54715$ covariates and Robust Multi-array Average (RMA) normalization was performed using Expression Console software \citep{kwissa2014dengue}.
We further standardize each group by their global mean and standard deviation.
For each group we consider the null hypothesis that their covariance matrix is a white Wishart matrix.
We calculate the largest eigenvalues from the four groups' covariance matrices  as our test statistics.
The eigenvalues of the matrix $\XX^*\XX/p$ of each group  is given in the Figure \ref{figure1}.
We compute the corresponding $p$-values of the four test statistics to be  $\ll 10^{-10}$ and therefore we reject all four null hypotheses.

\section{Proof of Theorems}\label{proofmain}
In this section we present the proofs of main theorems. Technical  lemmas and their proofs are provided in the Supplementary Material.
\paragraph{Outline of Proofs}
In our  analysis of the tail probability $P(\lambda_{(1)}>px)$ as well as  the variance of the Monte Carlo estimator,  we frequently use the following factorization of the joint probability density function (pdf) in \eqref{orderdensity}:
$$
g_{n,p,\beta}(\lambda_1,\ldots,\lambda_n)=nA_n\prod_{i=2}^n(\lambda_1-\lambda_i)\lambda_1^{\frac{\beta(p-n+1)}{2}-1}e^{-\lambda_1/2}g_{n-1,p-1,\beta}(\lambda_2,\ldots,\lambda_n),
$$ where $A_n$ is defined as in \eqref{An}. Some key lemmas are developed to show that in the case $p/n\to\infty$ we can find a positive sequence $\delta_n$ converging to zero (in certain rate) such that we can focus on the event
\begin{equation}\label{key1}
\{p(x+\delta_n)>\lambda_{(1)}>px, \lambda_{(2)}<p(\beta+\delta_n),\lambda_{(n)}>p(\beta-\delta_n)\}
\end{equation}
instead of $\{\lambda_{(1)}>px\}$. Roughly speaking we can think of this as the following, if $\lambda_{(1)}>px$, then (i) $\lambda_{(1)}\approx px$ and (ii) the remaining eigenvalues are approximately $p\beta$, see Lemmas \ref{lemma20} and \ref{lemma0} for further details.
With the result in \eqref{key1}, we can approximate $\prod_{i=2}^n(\lambda_1-\lambda_i)$  by
$$
\psi_{n,p,\beta}(\lambda_1,\ldots,\lambda_n)=(px-p\beta)^{\beta(n-1)}e^{(n+o(n))\frac{\lambda_1-px}{px-p\beta} -\sum_{i=2}^n\frac{\lambda_i-p\beta}{px-p\beta}-
\alpha\sum_{i=2}^n\left(\frac{\lambda_i-p\beta}{px-p\beta}\right)^2},
$$ with $\alpha$ being approximately equal to $1/2$. Consider then approximating $g_{n,p,\beta}$ with
$$
nA_n\psi_{n,p,\beta}(\lambda_1,\ldots,\lambda_n)\lambda_1^{\frac{\beta(p-n+1)}{2}-1}e^{-\lambda_1/2}g_{n-1,p-1,\beta}(\lambda_2,\ldots,\lambda_n).
$$
The benefit of working with the approximation in the previous display is that when integrating we can factor our integrand into the product of terms involving $\lambda_1$ and terms involving $\lambda_i$ for $i\in\{2,\ldots,n-1\}$. The term involving $\lambda_1$ are quite simple, basically boiling down to the pdf of a gamma distributed random variable. The term with $\lambda_i$ for $i>1$ requires the approximation of
\begin{align}
\label{eq:IntroExpectedValue}
E\exp\left[-\sum_{i=2}^n\frac{\lambda_i-p\beta}{px-p\beta}-
\alpha\sum_{i=2}^n\left(\frac{\lambda_i-p\beta}{px-p\beta}\right)^2\right].
\end{align}
We achieve this through using a matrix representation of the $\beta$-Laguerre ensemble by \cite{dumitriu2002matrix} to express
$\sum_{i=2}^n\lambda_i$ and $\sum_{i=2}^n\left(\lambda_i-p\beta\right)^2$ as sums of independent random variables.
 We are then able to develop approximations to the expected value in \eqref{eq:IntroExpectedValue}, see Lemma \ref{lemma1'} in the Supplementary Material.

\medskip
\begin{proof}[Proof of Theorem \ref{theorem1}]
We first focus on the case when  $p/n^{5/3}\to\infty$.
Set $a_n=\sqrt{np^{-1}}+p^{-1}$ and $b_n=pn^{-2}$. Choose $\delta_n=\min\{\sqrt{a_n}, \sqrt{a_nb_n}\,\}$. From the assumption $p/n^{5/3}\to\infty$ it is trivial to check  that
\begin{eqnarray}\label{eat_medcine}
\delta_n\to 0,\ \ p\delta_n\to\infty,\ \ \frac{\delta_n^2p}{n}\to\infty, \mbox{ and } \frac{\delta_nn^2}{p}\to 0.
\end{eqnarray}
In the discussion below, whenever we need a restriction about $\delta_n$ we can always get it from the above limits.

To prove \eqref{gxaaaa}, we first show that
\begin{eqnarray}
&& P(\lambda_{(1)}>px)
\gtrsim
 \frac{2x}{x-\beta}nA_n (px-p\beta)^{\beta(n-1)} (px)^{\frac{\beta(p-n+1)}{2}-1}
e^{-\frac{px}{2}-\frac{\beta^3n^{2}}{2(x-\beta)^2p}};\label{couter_clock}\\
& & P(\lambda_{(1)}>px)
\lesssim
 \frac{2x}{x-\beta}nA_n (px-p\beta)^{\beta(n-1)} (px)^{\frac{\beta(p-n+1)}{2}-1}
e^{-\frac{px}{2}-\frac{\beta^3n^{2}}{2(x-\beta)^2p}}\label{nice_sky}.
\end{eqnarray}
We prove them separately.

\medskip
{\it The proof of (\ref{couter_clock})}. By Lemmas \ref{lemma20} and \ref{lemma0} in the Supplementary Material,
\begin{eqnarray*}
P(\lambda_{(1)}> p(x+\delta_n))&=&o(1)P(\lambda_{(1)}> px),\\
P(\lambda_{(1)}>px, \lambda_{(2)}> p(\beta+\delta_n)) &=&o(1)P(\lambda_{(1)}>px),\\
P(\lambda_{(1)}>px, \lambda_{(n)}< p(\beta-\delta_n)) &=&o(1)P(\lambda_{(1)}>px).
\end{eqnarray*}
Therefore, $P(\lambda_{(1)}>px)$ is asymptotically equivalent to the probability of $\{px<\lambda_{(1)}<p(x+\delta_n),~ \lambda_{(2)}< p(\beta+\delta_n),$ and $\lambda_{(n)}> p(\beta-\delta_n)\}$. That is,
\begin{eqnarray*}
 && P(\lambda_{(1)}>px)\\
&\sim & \int_{\lambda_1>\cdots>\lambda_n,~ px<\lambda_{1}<p(x+\delta_n),\atop
~ \lambda_{2}< p(\beta+\delta_n), \lambda_{n}> p(\beta-\delta_n)}
n! f_{n,p,\beta}(\lambda_1,\cdots,\lambda_n) d\lambda_1\cdots d\lambda_n \\
&=& \int_{\lambda_1>\cdots>\lambda_n,~ px<\lambda_{1}<p(x+\delta_n),\atop
~ \lambda_{2}< p(\beta+\delta_n), \lambda_{n}> p(\beta-\delta_n)}
 n A_n \prod_{i=2}^n(\lambda_1-\lambda_i)^\beta
\cdot \lambda_1^{\frac{\beta(p-n+1)}{2}-1}\cdot
e^{-\frac{1}{2} \lambda_1}\\
&&\quad\quad\quad
 \times g_{n-1,p-1,\beta}(\lambda_2,\cdots,\lambda_n)d\lambda_1\cdots d\lambda_n \\
&=& \int_{\lambda_1>\cdots>\lambda_n,~ px<\lambda_{1}<p(x+\delta_n),\atop~ \lambda_{2}< p(\beta+\delta_n), \lambda_{n}> p(\beta-\delta_n)}
 n A_n  (px-p\beta)^{\beta(n-1)}
\prod_{i=2}^n\left(1+\frac{\lambda_1-px}{px-p\beta}-\frac{\lambda_i-p\beta}{px-p\beta}\right)^\beta\\
&&\quad\quad\quad \times
 \lambda_1^{\frac{\beta(p-n+1)}{2}-1}
e^{-\frac{1}{2} \lambda_1}
g_{n-1,p-1,\beta}(\lambda_2,\cdots,\lambda_n)d\lambda_1\cdots d\lambda_n.
\end{eqnarray*}

Take $z$ and $a_2$ in Lemma \ref{lemmabound} such that $$z=\frac{\lambda_1-px}{px-p\beta}-\frac{\lambda_i-p\beta}{px-p\beta}
 \mbox{ and }
 \alpha_2=\frac{1}{2}-\frac{\delta_n}{x-\beta},$$
 then $\alpha_2<\frac{1}{2}$ as $n$ is sufficiently large  (we will have similar situations in the rest of the paper, the same interpretation ``as $n$ is sufficiently large" applies unless otherwise specified). Consequently,
 for $px<\lambda_{1}<p(x+\delta_n)$ and
  $p(\beta-\delta_n)< \lambda_{n}\leq\ldots\leq \lambda_{2}< p(\beta+\delta_n)$
\begin{eqnarray}\label{gx6}
 &&\prod_{i=2}^n\Big(1+\frac{\lambda_1-px}{px-p\beta}-\frac{\lambda_i-p\beta}{px-p\beta}\Big)\notag\\
&=& e^{\sum_{i=2}^n\log\left(1+\frac{\lambda_1-px}{px-p\beta}-\frac{\lambda_i-p\beta}{px-p\beta}\right)}\notag\\
&\leq&
e^{\sum_{i=2}^n\left(\frac{\lambda_1-px}{px-p\beta}-\frac{\lambda_i-p\beta}{px-p\beta}\right)
-\alpha_2\sum_{i=2}^n\left(\frac{\lambda_1-px}{px-p\beta}-\frac{\lambda_i-p\beta}{px-p\beta}\right)^2}
\notag\\
&\leq&
e^{(n+o(n))\frac{\lambda_1-px}{px-p\beta} -\sum_{i=2}^n\frac{\lambda_i-p\beta}{px-p\beta}-
\alpha_2\sum_{i=2}^n\left(\frac{\lambda_i-p\beta}{px-p\beta}\right)^2},
\end{eqnarray}
where in the last step we used $e^{-(n-1)\alpha_2\left(\frac{\lambda_1-px}{px-p\beta}\right)^2}\leq 1$ and
$\alpha_2\frac{\lambda_1-px}{px-p\beta}\sum_{i=2}^n\frac{\lambda_i-p\beta}{px-p\beta}=o(n)\frac{\lambda_1-px}{px-p\beta}$ since $\frac{\lambda_i-p\beta}{px-p\beta}\leq O(\delta_n)=o(1)$ uniformly for all $2\leq i \leq n$.
Then we have the following upper bound:
\begin{eqnarray}
 && P(\lambda_{(1)}>px) \nonumber\\
&\lesssim &
nA_n (px-p\beta)^{\beta(n-1)}
\int_{px}^{p(x+\delta_n)}
\lambda_1^{\frac{\beta (p-n+1)}{2}-1}
e^{(\beta n+o(n))\frac{\lambda_1-px}{px-p\beta} -\frac{\lambda_1}{2}}d\lambda_1\nonumber \\
&&\times
\int_{\lambda_2>\cdots>\lambda_n\atop  \lambda_{2}< p(\beta+\delta_n), \lambda_{n}> p(\beta-\delta_n)}
e^{-\beta\sum_{i=2}^n\frac{\lambda_i-p\beta}{p(x-\beta)}
-\beta\alpha_2\sum_{i=2}^n\left(\frac{\lambda_i-p\beta}{px-p\beta}\right)^2}\notag \\
&&\quad\times g_{n-1,p-1,\beta}(\lambda_2,\cdots,\lambda_n)d\lambda_2\cdots d\lambda_n \nonumber\\
&\leq&
nA_n (px-p\beta)^{\beta(n-1)}
\int_{px}^{p(x+\delta_n)}
\lambda_1^{\frac{\beta (p-n+1)}{2}-1}
e^{(\beta n+o(n))\frac{\lambda_1-px}{px-p\beta} -\frac{\lambda_1}{2}}d\lambda_1 \nonumber\\
&&\times E\Big[e^{-\beta\sum_{i=2}^n\frac{(\lambda_i-\beta p)}{p(x-\beta)}-\beta\alpha_2\sum_{i=2}^n\frac{(\lambda_i-\beta p)^2}{p^2(x-\beta)^2}}\Big]. \label{left_sun}
\end{eqnarray}
Trivially, $\frac{n^2}{p}-\frac{(n-1)^2}{p-1}=o(1)$ since $n/p\to 0$. We then have from Lemma \ref{lemma1'} that
\begin{eqnarray}\label{siren_dark}
E\Big[e^{-\beta\sum_{i=2}^n\frac{(\lambda_i-\beta p)}{p(x-\beta)}-\beta\alpha_2\sum_{i=2}^n\frac{(\lambda_i-\beta p)^2}{p^2(x-\beta)^2}}\Big]=
 e^{-\frac{\alpha_2\beta^3}{(x-\beta)^2}[\frac{n^2}{p}-\frac{O(1)n^{3}}{3p^2}]}.
\end{eqnarray}
 This implies that:
\begin{eqnarray}\label{upbound}
&& P(\lambda_{(1)}>px)\notag\\
&\lesssim& nA_n (px-p\beta)^{\beta(n-1)}
e^{-\frac{\alpha_2\beta^3}{(x-\beta)^2}[\frac{n^2}{p}-\frac{O(1)n^{3}}{3p^2}]}
\notag\\
&&\times
\int_{px}^{p(x+\delta_n)}
\lambda_1^{\frac{\beta (p-n+1)}{2}-1}
e^{(\beta n+o(n))\frac{\lambda_1-px}{px-p\beta}-\frac{\lambda_1}{2}}d\lambda_1\notag\\
&=& nA_n (px-p\beta)^{\beta(n-1)}
e^{-\frac{\alpha_2\beta^3}{(x-\beta)^2}[\frac{n^2}{p}-\frac{O(1)n^{3}}{3p^2}]}
\notag\\
&&\times
\int_{0}^{p\delta_n}
(\lambda_1+px)^{ \frac{\beta(p-n+1)}{2}-1}
e^{(\beta n+o(n))\frac{\lambda_1}{px-p\beta}-\frac{\lambda_1+px}{2}}d\lambda_1\notag\\
& \leq &
nA_n (px-p\beta)^{\beta(n-1)}
e^{-\frac{px}{2}-\frac{\alpha_2\beta^3}{(x-\beta)^2}[\frac{n^2}{p}-\frac{O(1)n^{3}}{3p^2}]}
 (px)^{\frac{\beta(p-n+1)}{2}-1}\notag\\
&&\times \int_{0}^{p\delta_n}
e^{ \{\frac{\beta(p-n+1)}{2}-1\}\frac{\lambda_1}{px}+(\beta n+o(n))\frac{\lambda_1}{px-p\beta}  -\frac{\lambda_1}{2}}
d\lambda_1 \notag\\
&\sim& \frac{2x}{x-\beta}nA_n (px-p\beta)^{\beta(n-1)} (px)^{\frac{\beta(p-n+1)}{2}-1}
e^{-\frac{px}{2}-\frac{\alpha_2\beta^3}{(x-\beta)^2}[\frac{n^2}{p}-\frac{O(1)n^{3}}{3p^2}]},
\end{eqnarray}
where in the second step we changed the variable $\lambda_1$ to $\lambda_1+px$; in the third step we used $(\lambda_1+px)\leq(px)\exp\{\lambda_1/(px)\}$; the last step follows from the fact that   $\{\frac{\beta(p-n+1)}{2}-1\}\frac{\lambda_1}{px}+
(\beta n+o(n))\frac{\lambda_1}{px-p\beta} -\frac{\lambda_1}{2}\sim \frac{\beta-x}{2x}\lambda_1$ by using the fatcs $p\delta_n\to\infty$ and $n/p\to 0$.

Finally, noticing that $\frac{\alpha_2\beta^3}{(x-\beta)^2}[\frac{n^2}{p}-\frac{O(1)n^{3}}{3p^2}]-\frac{\beta^3n^{2}}{2(x-\beta)^2p}\to 0$ due to the fact $\frac{\delta_n n^2}{p}\to 0$ and $\frac{n^{5/3}}{p}\to 0$, we obtain (\ref{couter_clock}).

\medskip
{\it The proof of (\ref{nice_sky})}. By the same argument as in the above derivation,
  take $\alpha_1=1/2+\delta_n/(x-\beta)$ in Lemma \ref{lemmabound} to have
\begin{eqnarray*}
\sum_{i=2}^n\log\left(1+\frac{\lambda_1-px}{px-p\beta}-\frac{\lambda_i-p\beta}{px-p\beta}\right)
&\geq&\sum_{i=2}^n \log\left(1-\frac{\lambda_i-p\beta}{px-p\beta}\right)\\
&\geq&
-\sum_{i=2}^n\frac{\lambda_i-p\beta}{px-p\beta}
-\alpha_1\sum_{i=2}^n\frac{(\lambda_i-p\beta)^2}{(px-p\beta)^2}
\end{eqnarray*}
under the restriction $px<\lambda_{1}<p(x+\delta_n),~ \lambda_{2}< p(\beta+\delta_n),$ and $\lambda_{n}> p(\beta-\delta_n)$. Therefore,
\begin{eqnarray}\label{lowerbound_0}
&& P(\lambda_{(1)}>px)\notag\\
&\geq  & \int_{\lambda_1>\cdots>\lambda_n,~ \lambda_{1}>px,\atop \lambda_{2}< p(\beta+\delta_n), \lambda_{n}> p(\beta-\delta_n)} n! f_{n,p,\beta}(\lambda_1,\cdots,\lambda_n) d\lambda_1\cdots d\lambda_n \notag\\
&\geq & nA_n (px-p\beta)^{\beta(n-1)}\notag\\
&&\times
\int_{\lambda_2>\cdots>\lambda_n\atop  \lambda_{2}< p(\beta+\delta_n), \lambda_{n}> p(\beta-\delta_n)}\int_{px}^{p(x+\delta_n)}
 e^{-\beta\sum_{i=2}^n\frac{\lambda_i-p\beta}{p(x-\beta)}
-\beta\alpha_1\sum_{i=2}^n\frac{(\lambda_i-p\beta)^2}{p^2(x-\beta)^2}}\notag\\
&&\quad\quad\times
\lambda_1^{\frac{\beta (p-n+1)}{2}-1}e^{-\frac{\lambda_1}{2}}
\times g_{n-1,p-1,\beta}(\lambda_2,\cdots,\lambda_n)d\lambda_1d\lambda_2\cdots d\lambda_n\notag\\
&= & nA_n (px-p\beta)^{\beta(n-1)}\times \int_{px}^{p(x+\delta_n)}
\lambda_1^{\frac{\beta (p-n+1)}{2}-1}e^{-\frac{\lambda_1}{2}}d\lambda_1\notag\\
&&\times \int_{\lambda_2>\cdots>\lambda_n,\atop  \lambda_{2}< p(\beta+\delta_n), \lambda_{n}> p(\beta-\delta_n)}e^{-\beta\sum_{i=2}^n\frac{\lambda_i-p\beta}{p(x-\beta)}
-\beta\alpha_1\sum_{i=2}^n\frac{(\lambda_i-p\beta)^2}{p^2(x-\beta)^2}}
\notag\\ &&\quad\quad\times
g_{n-1,p-1,\beta}(\lambda_2,\cdots,\lambda_n)d\lambda_2\cdots d\lambda_n.
\end{eqnarray}
By Lemmas \ref{lemma1'} and  \ref{lemma4}, we have the second integral in \eqref{lowerbound_0} is $ e^{-\frac{\alpha_1\beta^3}{(x-\beta)^2}[\frac{n^2}{p}-\frac{O(1)n^{3}}{3p^2}]}$ since $\frac{\delta_n n^2}{p}\to 0$ and  $p\delta_n\to\infty$.
It follows that
\begin{eqnarray}
 && P(\lambda_{(1)}>px)\notag\\
&&\gtrsim
nA_n (px-p\beta)^{\beta(n-1)}
e^{-\frac{\alpha_1\beta^3}{(x-\beta)^2}[\frac{n^2}{p}-\frac{O(1)n^{3}}{3p^2}]}
\int_{px}^{p(x+\delta_n)}
\lambda_1^{ \frac{\beta(p-n+1)}{2}-1}e^{-\frac{\lambda_1}{2}}d\lambda_1
\notag\\
&&\sim
 \frac{2x}{x-\beta}nA_n (px-p\beta)^{\beta(n-1)} (px)^{\frac{\beta(p-n+1)}{2}-1}
e^{-\frac{px}{2}-\frac{\alpha_1\beta^3}{(x-\beta)^2}[\frac{n^2}{p}-\frac{O(1)n^{3}}{3p^2}]},
\label{lowerbound}
\end{eqnarray}
where the last step follows the same argument as in \eqref{upbound} due to the fact $p\delta_n\to\infty$. This yields (\ref{nice_sky}) by noticing that $\frac{\alpha_1\beta^3}{(x-\beta)^2}[\frac{n^2}{p}-\frac{O(1)n^{3}}{3p^2}]-\frac{\beta^3n^{2}}{2(x-\beta)^2p}\to 0$.

\bigskip
Next we prove the result \eqref{gxaaaa}. By the above derivations,
\begin{eqnarray}\label{appoxeq}
&& \log P(\lambda_{(1)} > px)\notag\\
&&=~ \log n+\log A_n +\log \frac{2x}{x-\beta}+{\beta(n-1)}\log(px-p\beta)\notag\\
&&~ +\left[\frac{\beta (p-n+1)}{2}-1\right]\log(px)-\frac{px}{2}-\frac{\beta^3n^{2}}{2(x-\beta)^2p}+o(1).
\end{eqnarray}
From the Stirling formula 
$$\log\Gamma(z)=z\log z-z-(\log z)/2+\log \sqrt{2\pi}+o(1)$$ for large $|z|$; we know
\begin{align}\label{logA}
&~ \log A_n\notag\\
=&~
\log\frac{2^{-\frac{\beta(n+p-1)}{2}}\Gamma(1+\frac{\beta}{2})}{\Gamma(1+\frac{\beta n}{2})\Gamma(\frac{\beta p}{2})}
=\log\frac{2^{-\frac{\beta(n+p-1)}{2}}\Gamma(1+\frac{\beta}{2})}{\frac{\beta n}{2}\Gamma(\frac{\beta n}{2})\Gamma(\frac{\beta p}{2})}\notag\\
=&~
-\frac{\beta}{2}(n+p-1)\log 2+\log\Gamma\big(1+\frac{\beta}{2}\big)-\frac{\beta n}{2}\log\frac{\beta n}{2} -\frac{\beta p}{2}\log\frac{\beta p}{2} \notag\\
&~+\frac{\beta n}{2}+\frac{\beta p}{2}+\frac{1}{2}\log \frac{\beta p}{2}-\frac{1}{2}\log \frac{\beta n}{2}
-2\log \sqrt{2\pi}
+o(1)\notag\\
=&~ -\frac{\beta p}{2}\log p -\frac{\beta p}{2}\left(\log {\beta}-1\right)
-\frac{\beta n}{2}\log n-\frac{\beta n}{2}\left(\log{\beta}-1\right)\notag\\
&~+\frac{1}{2}\log \frac{\beta p}{2}-\frac{1}{2}\log \frac{\beta n}{2}
+{\frac{\beta}{2}\log 2} -2\log \sqrt{2\pi}+\log\Gamma\big(1+\frac{\beta}{2}\big)+o(1).\notag\\&
\end{align}
Therefore, plugging in the above expansion of $\log A_n$  into equation \eqref{appoxeq}, we obtain
\begin{eqnarray*}
&&\log P(\lambda_{(1)} > px) \\
&=&\log n -\frac{\beta p}{2}\log p -\frac{\beta p}{2}\left(\log{\beta}-1\right)
-\frac{\beta n}{2}\log n-\frac{\beta n}{2}\left(\log{\beta}-1\right)\\
&&+\frac{1}{2}\log \frac{\beta p}{2}-\frac{1}{2}\log \frac{\beta n}{2}{+\frac{\beta}{2}\log 2}-2\log \sqrt{2\pi}+\log\Gamma\big(1+\frac{\beta}{2}\big)\\
 &&+\log \frac{2x}{x-\beta}+{\beta(n-1)}\log(px-p\beta) \\
&&+\left[\frac{\beta (p-n+1)}{2}-1\right]\log(px)-\frac{px}{2}-\frac{\beta^3n^{2}}{2(x-\beta)^2p}+o(1)\\
&=&  p\left(\frac{\beta}{2}-\frac{\beta}{2}\log\beta-\frac{x}{2}+\frac{\beta}{2}\log x\right)+\frac{\beta n}{2}\log \frac{p}{n} -\frac{\beta+1}{2}\log p\\
&&+\beta n\left(-\frac{\log x}{2}+\log(x-\beta)-\frac{\log\beta}{2}+\frac{1}{2}\right)+\frac{1}{2}\log n-\frac{\beta^3n^{2}}{2(x-\beta)^2p}\\
&&-(\beta+1)\log(x-\beta)+\frac{\beta}{2}\log ({2}x)-\log (\pi)+\log\Gamma\big(1+\frac{\beta}{2}\big)+o(1).
\end{eqnarray*}
This completes the proof of \eqref{gxaaaa}.

\medskip
Next we prove the result \eqref{gxgx2} when $p/n\to\infty$.
For any $l_n>0$ such that $l_n\to\infty$ and $\sqrt{np^{-1}}l_n\to 0$, take $\delta_n=\sqrt{np^{-1}}l_n$. Then $\delta_n\to 0$ and
$\frac{\delta_n^2}{np^{-1}}\to\infty.$ Reviewing the proof of  the asymptotic upper bound \eqref{upbound} and the lower bound \eqref{lowerbound}, we only use the three conditions: $\delta_n\to 0$,
$\frac{\delta_n^2}{np^{-1}}\to\infty$ and $p/n\to\infty$.
Consequently,
\begin{eqnarray*}
&& B_{n,p,\beta}(x)+ \big(\frac{1}{2}-\alpha_1\big)\frac{\beta^3n^{2}}{(x-\beta)^2p}
 +\alpha_1\frac{\beta^3O(1)n^{3}}{3(x-\beta)^2p^2}\\
&\leq& \log P(\lambda_{(1)} > px)\\
&\leq& B_{n,p,\beta}(x)+\big(\frac{1}{2}-\alpha_2\big)\frac{\beta^3n^{2}}{(x-\beta)^2p}
+\alpha_2\frac{\beta^3O(1)n^{3}}{3(x-\beta)^2p^2},
\end{eqnarray*}
where $B_{n,p,\beta}(x)$ is defined as in Theorem \ref{theorem1}, $\alpha_1=\frac{1}{2}+\frac{\delta_n}{x-\beta}$ and $\alpha_2=\frac{1}{2}-\frac{\delta_n}{x-\beta}$.
Replace $\delta_n$ with $\sqrt{np^{-1}}l_n$ and we have
$$B_{n,p,\beta}(x)- \Theta(1)\frac{l_n n^{5/2}}{p^{3/2}}
\leq \log P(\lambda_{(1)} > px)
\leq B_{n,p,\beta}(x)+ \Theta(1)\frac{l_n n^{5/2}}{p^{3/2}},$$
 or equivalently,
\begin{eqnarray*}
\big|\log P(\lambda_{(1)} > px) - B_{n,p,\beta}(x)\big|\cdot \frac{p^{3/2}}{n^{5/2}} \leq \Theta(l_n)
\end{eqnarray*}
for any $l_n$ satisfying $l_n\to\infty$ and  $l_n=o(\frac{p}{n})$. Observe that the left hand side of the above does not depend on $l_n$, we conclude
\begin{eqnarray*}
\limsup_{n\to\infty}\big|\log P(\lambda_{(1)} > px) - B_{n,p,\beta}(x)\big|\cdot \frac{p^{3/2}}{n^{5/2}} < \infty
\end{eqnarray*}
by using a trivial argument of contradiction.
This completes the proof.
\end{proof}

\bigskip

\begin{proof}[Proof of Theorem \ref{theorem2}] 

First consider (i). Since $p/n^{5/3}\to\infty$, we are able to pick $\delta_n>0$ satisfying $\delta_n\to 0$,
${\delta_n^2}{n^{-1}p}\to\infty$ and $\delta_nn^2/p \to 0$. To show $E^Q\left[L_p^2\right]=E^Q\left[L_p^2;\, \lambda_{(1)}>px\right]\sim P(\lambda_{(1)}>px)^2$, by Lemma \ref{lemma3} in the Supplementary Material, it suffices to show that
\begin{eqnarray}\label{Liuhe_county}
&& E^Q\left[L_p^2;\, \lambda_{(1)}>px,p(\beta+\delta_n)>\lambda_{(2)}\cdots>\lambda_{(n)}>p(\beta-\delta_n) \right]\\
&\sim&
 P(\lambda_{(1)}>px)^2.\notag
\end{eqnarray}
Following (\ref{aa1}) and \eqref{Lp},
\begin{align}
 & \mbox{LHS of } (\ref{Liuhe_county})\notag \\
 = &~ E^Q\Big[\Big(\frac{nA_n\times
\prod_{i=2}^n(\lambda_1-\lambda_i)^{\beta}
\cdot \lambda_1^{\frac{\beta(p-n+1)}{2}-1}\cdot
e^{-\frac{1}{2} \lambda_1}}
{\frac{x-\beta}{2x} e^{-\frac{x-\beta}{2x} (\lambda_1-px)}\cdot I_{(\lambda_1>px)}}
\Big)^2;  \nonumber\\
& \ \ \ \ \ \ \ \ \ \ \ \ \ \ \ \ \ \ \ \ \ \ \ \ \ \ \ \
 \lambda_1>px,p(\beta+\delta_n)>\lambda_2\cdots>\lambda_n>p(\beta-\delta_n)\Big]\notag\\
=&~ 2x(x-\beta)^{-1}n^2 A_n^2 e^{-\frac{x-\beta}{2x} px}\notag\\
&~ \times \int_{\lambda_1>px,\atop p(\beta+\delta_n)>\lambda_2\cdots>\lambda_n>p(\beta-\delta_n)}
\prod_{i=2}^n(\lambda_1-\lambda_i)^{2\beta}
\cdot \lambda_1^{\beta(p-n+1)-2}\cdot
e^{-(1-\frac{x-\beta}{2x})\lambda_1} d\lambda_1\notag\\
&\quad\quad\times g_{n-1,p-1,\beta}(\lambda_2,\cdots,\lambda_n)d\lambda_2\cdots d\lambda_n\notag\\
=&~
 2x(x-\beta)^{-1}n^2 A_n^2 e^{-\frac{x-\beta}{2x} px} (px-p\beta)^{2\beta(n-1)}\notag\\
&~ \times \int_{\lambda_2>\cdots>\lambda_n\atop  \lambda_{2}< p(\beta+\delta_n), \lambda_{n}> p(\beta-\delta_n)} \int_{px}^\infty
 \lambda_1^{\beta(p-n+1)-2}\cdot
e^{-(1-\frac{x-\beta}{2x})\lambda_1} d\lambda_1\notag\\
&\quad\quad\times \prod_{i=2}^n\left(1+\frac{\lambda_1-px}{px-p\beta}-\frac{\lambda_i-p\beta}{px-p\beta}\right)^{2\beta}\cdot g_{n-1,p-1,\beta}(\lambda_2,\cdots,\lambda_n)d\lambda_2\cdots d\lambda_n.\notag\\&
\label{eq:sq}
\end{align}
Using the upper bound as in \eqref{gx6} and part of the arguments in (\ref{left_sun}) and (\ref{siren_dark}), we have
\begin{align}
& \mbox{Display } \eqref{eq:sq}\notag\\
\lesssim& ~
 2x(x-\beta)^{-1}n^2 A_n^2 e^{-\frac{x-\beta}{2x} px} (px-p\beta)^{2\beta(n-1)}\notag\\
&~ \times
\int_{px}^\infty
e^{(2\beta n+o(n))\frac{\lambda_1-px}{px-p\beta} -
2\beta(n-1)\alpha_2\left(\frac{\lambda_1-px}{px-p\beta}\right)^2}
\cdot \lambda_1^{\beta(p-n+1)-2}\cdot
e^{-(1-\frac{x-\beta}{2x})\lambda_1} d\lambda_1\notag\\
&~\times\int_{\lambda_2>\cdots>\lambda_n\atop  \lambda_{2}< p(\beta+\delta_n), \lambda_{n}> p(\beta-\delta_n)}
 e^{-2\beta\sum_{i=2}^n\frac{\lambda_i-p\beta}{p(x-\beta)}
-2\beta\alpha_2\sum_{i=2}^n\left(\frac{\lambda_i-p\beta}{px-p\beta}\right)^2}\notag\\
&\times
g_{n-1,p-1,\beta}(\lambda_2,\cdots,\lambda_n)d\lambda_2\cdots d\lambda_n\notag\\
\sim&~
 2x(x-\beta)^{-1}n^2 A_n^2
 e^{-\frac{x-\beta}{2x} px-2\alpha_2\frac{\beta^3}{(x-\beta)^2}[\frac{n^2}{p}-\frac{O(1)n^3}{3p^2}]}
 (px-p\beta)^{2\beta(n-1)}\notag\\
&~ \times  \int_{px}^\infty e^{(2\beta n+o(n))\frac{\lambda_1-px}{px-p\beta} -
2\beta(n-1)\alpha_2\left(\frac{\lambda_1-px}{px-p\beta}\right)^2 }
\cdot \lambda_1^{\beta(p-n+1)-2}\cdot
e^{-(1-\frac{x-\beta}{2x})\lambda_1} d\lambda_1\notag\\
=&~
2x(x-\beta)^{-1}n^2 A_n^2 e^{-\frac{x-\beta}{2x} px
-2\alpha_2\frac{\beta^3}{(x-\beta)^2}[\frac{n^2}{p}-\frac{O(1)n^3}{3p^2}]}
(px-p\beta)^{2\beta(n-1)}\notag\\
&~ \times  \int_{0}^\infty e^{\frac{(2\beta n+o(n))\lambda_1}{px-p\beta} -2\beta(n-1)\alpha_2\left(\frac{\lambda_1}{px-p\beta}\right)^2
-(1-\frac{x-\beta}{2x})(\lambda_1+px)}
\cdot (\lambda_1+px)^{\beta(p-n+1)-2}d\lambda_1\notag\\
\lesssim &~
(2x)^2(x-\beta)^{-2}n^2 A_n^2 e^{-px-2\alpha_2\frac{\beta^3}{(x-\beta)^2}[\frac{n^2}{p}-\frac{O(1)n^3}{3p^2}]} (px-p\beta)^{2\beta(n-1)}
(px)^{\beta(p-n+1)-2},\notag\\&
\label{eq:sq1}
\end{align}
where $\alpha_2=\frac{1}{2}-\frac{\delta_n}{x-\beta}$; in the third step we changed variable $\lambda_1$ to $\lambda_1+px$ for the integral; the last step follows from the inequality that
\begin{eqnarray*}
&& \int_{0}^\infty e^{\frac{(2\beta n+o(n))\lambda_1}{px-p\beta} -2\beta(n-1)\alpha_2\left(\frac{\lambda_1}{px-p\beta}\right)^2
-(1-\frac{x-\beta}{2x})(\lambda_1+px)}
\cdot (\lambda_1+px)^{\beta(p-n+1)-2}d\lambda_1\\
 &\leq&
(px)^{\beta(p-n+1)-2}e^{-(1-\frac{x-\beta}{2x})px}
\int_{0}^\infty e^{\frac{(2\beta n+o(n))\lambda_1}{px-p\beta} -(1-\frac{x-\beta}{2x})\lambda_1}
e^{[\beta(p-n+1)-2]\frac{\lambda_1}{px}}d\lambda_1\\
&\sim&
(px)^{\beta(p-n+1)-2}e^{-(1-\frac{x-\beta}{2x})px}
\frac{2x}{x-\beta},
\end{eqnarray*}
where in the first step we used $(\lambda+px)^{{\beta(p-n+1)-2}}\leq (px)^{\beta(p-n+1)-2}\exp(\lambda_1/(px))$ and the second step we used $\frac{(2\beta n+o(n))\lambda_1}{px-p\beta} -(1-\frac{x-\beta}{2x})\lambda_1+[\beta(p-n+1)-2]\frac{\lambda_1}{px}\sim -\frac{x-\beta}{2x}\lambda_1.$ Easily, $\alpha_2[\frac{n^2}{p}-\frac{O(1)n^3}{3p^2}]-\frac{n^2}{2p} \to 0$.
Based on  {\eqref{eq:sq1}} and Theorem \ref{theorem1}, we know that
$$\frac{E^Q\big[(\frac{dP}{dQ})^2; \lambda_{(1)}>px\big]}{P(\lambda_{(1)}>px)^{2}}
\lesssim 1$$
provided $p/n^{5/3}\to\infty$.
Since
\begin{equation}\label{climb_mountain}
E^Q\Big[(\frac{dP}{dQ})^2; \lambda_{(1)}>px\Big]
\geq \Big\{E^Q\Big[(\frac{dP}{dQ}); \lambda_{(1)}>px\Big]\Big\}^2
=P(\lambda_{(1)}>px)^{2}
\end{equation}
by H\"older's inequality, we have
\begin{eqnarray}\label{array}
\frac{E^Q\big[(\frac{dP}{dQ})^2; \lambda_{(1)}>px\big]}{P(\lambda_{(1)}>px)^{2}}
\sim 1.
\end{eqnarray}
So the second statement of the theorem is obtained. 

Now we prove the first one.
Recall $P^*_{px}=P(\cdot|\lambda_{(1)}>px)$  as described in the paragraph following (\ref{estr}). It is easy to check that
$$\frac{dP^*_{px}}{dQ}=\frac{dP}{dQ}\cdot\frac{I(\lambda_{(1)}>px)}{P(\lambda_{(1)}>px)}$$ a.s.  with respect to $Q$ defined on ${\cal{B}}([0, \infty)^n)$.
For any $A\subset {\cal{B}}([0, \infty)^n)$,
\begin{eqnarray*}
|Q(A)-P_{px}^*(A)|&=&\Big|\int_A\Big(\frac{dP^*_{px}}{dQ}-1\Big)\,dQ\Big|\\
&\leq & \biggr\{E^Q\Big(\frac{dP}{dQ}\cdot\frac{I(\lambda_{(1)}>px)}{P(\lambda_{(1)}>px)}-1\Big)^2
\biggr\}^{1/2}\\
& = & \Big(\frac{E^Q\big[(\frac{dP}{dQ})^2; \lambda_{(1)}>px\big]}{P(\lambda_{(1)}>px)^{2}}-1\Big)^{1/2} \to 0
\end{eqnarray*}
by H\"{o}lder's inequality and (\ref{array}).
Thus,  $$\lim_{n\rightarrow\infty} \sup_{A\in {\cal F}}|Q(A)-P^*_{px}(A)|=0$$ as $p/n^{5/3}\to\infty$. This gives the first conclusion of part (i).

We next prove the conclusion in (ii) and (iii). For general $p/n\to\infty$, by Lemma \ref{lemma3}, we have
\begin{eqnarray*}
& &  E^Q\left[L_p^2;\, \lambda_{(1)}>px\right] \nonumber\\
 & \sim &
 E^Q\left[L_p^2;\, \lambda_{(1)}>px,p(\beta+\delta_n)>\lambda_{(2)}\cdots>\lambda_{(n)}>p(\beta-\delta_n) \right] \\
 &&+ o(1) P(\lambda_{(1)}>px)^2.\ \
\end{eqnarray*}
Note that $P(\lambda_{(1)}>px)^2 \leq E^Q\left[L_p^2;\, \lambda_{(1)}>px\right]$. Then we have
\begin{eqnarray*}
  E^Q\left[L_p^2;\, \lambda_{(1)}>px\right]
  \sim
E^Q\left[L_p^2;\, \lambda_{(1)}>px,p(\beta+\delta_n)>\lambda_{(2)}\cdots>\lambda_{(n)}>p(\beta-\delta_n) \right] .
\end{eqnarray*}
Then following exactly the same argument as in \eqref{eq:sq} and \eqref{eq:sq1}, which requires the assumption that $p/n\to\infty$ only, we have
\begin{eqnarray*}
& &   E^Q\left[L_p^2;\, \lambda_{(1)}>px\right]\\
& \lesssim & (2x)^2(x-\beta)^{-2}n^2 A_n^2 e^{-px-2\alpha_2\frac{\beta^3}{(x-\beta)^2}[\frac{n^2}{p}-\frac{O(1)n^3}{3p^2}]} \\
&& \times ~(px-p\beta)^{2\beta(n-1)}
(px)^{\beta(p-n+1)-2}.
\end{eqnarray*}
Similar to the proof of Theorem \ref{theorem1}, this implies that
\begin{equation}\label{ax}
	E^Q\left[L_p^2;\, \lambda_{(1)}>px\right]\leq \exp\Big\{2B_{n,p,\beta}(x)+ O(1)\frac{ n^{5/2}}{p^{3/2}}\Big\}.
\end{equation}
Recall that $L_p=\frac{dP}{dQ}1\{\lambda_{(1)} > px\}$. Then, the  ratio $${E^Q\big[L_p^2; \lambda_{(1)}>px\big]}/{P(\lambda_{(1)}>px)^{2}}=O(1)$$  provided $n^{5/3}/p=O(1)$; \eqref{ax} together with (\ref{climb_mountain}) further imply that
$$\lim_{n\to\infty}\frac{\log E^Q\big[(\frac{dP}{dQ})^2; \lambda_{(1)}>px\big]}{2\log P(\lambda_{(1)}>px) }
 = 1 $$ as $p/n\to\infty$.  The proof is complete.
  \end{proof}

\appendix
\section*{Acknowledgment}
 The authors thank the editor, an associate editor, and an anonymous reviewer for many helpful and constructive comments.

\section*{Supplementary Material}
The online Supplementary Material contains proofs of technical lemmas and Theorem 3.

\bibliographystyle{imsart-nameyear}
\bibliography{minimalBib}

\begin{thebibliography}{37}

\bibitem[\protect\citeauthoryear{Adler, Blanchet and Liu}{2012}]{ABL09}
\begin{barticle}[author]
\bauthor{\bsnm{Adler},~\bfnm{R.~J.}\binits{R.~J.}},
  \bauthor{\bsnm{Blanchet},~\bfnm{J.~H.}\binits{J.~H.}} \AND
  \bauthor{\bsnm{Liu},~\bfnm{J.~C.}\binits{J.~C.}}
(\byear{2012}).
\btitle{Efficient {Monte Carlo} for Large Excursions of {Gaussian} Random
  Fields}.
\bjournal{Annals of Applied Probability}
\bvolume{22}
\bpages{1167-1214}.
\end{barticle}
\endbibitem

\bibitem[\protect\citeauthoryear{Anderson, Guionnet and
  Zeitouni}{2010}]{anderson2010introduction}
\begin{bbook}[author]
\bauthor{\bsnm{Anderson},~\bfnm{Greg~W}\binits{G.~W.}},
  \bauthor{\bsnm{Guionnet},~\bfnm{Alice}\binits{A.}} \AND
  \bauthor{\bsnm{Zeitouni},~\bfnm{Ofer}\binits{O.}}
(\byear{2010}).
\btitle{An Introduction to Random Matrices}.
\bpublisher{Cambridge University Press}.
\end{bbook}
\endbibitem

\bibitem[\protect\citeauthoryear{Asmussen and Glynn}{2007}]{ASMGLY07}
\begin{bbook}[author]
\bauthor{\bsnm{Asmussen},~\bfnm{S.}\binits{S.}} \AND
  \bauthor{\bsnm{Glynn},~\bfnm{P.}\binits{P.}}
(\byear{2007}).
\btitle{Stochastic Simulation: Algorithms and Analysis}.
\bpublisher{Springer}, \baddress{New York, NY, USA}.
\end{bbook}
\endbibitem

\bibitem[\protect\citeauthoryear{Asmussen and Kroese}{2006}]{AsmKro06}
\begin{barticle}[author]
\bauthor{\bsnm{Asmussen},~\bfnm{S.}\binits{S.}} \AND
  \bauthor{\bsnm{Kroese},~\bfnm{D.}\binits{D.}}
(\byear{2006}).
\btitle{Improved algorithms for rare event simulation with heavy tails}.
\bjournal{Advances in Applied Probability}
\bvolume{38}
\bpages{545-558}.
\end{barticle}
\endbibitem

\bibitem[\protect\citeauthoryear{Bianchi et~al.}{2011}]{bianchi2011performance}
\begin{barticle}[author]
\bauthor{\bsnm{Bianchi},~\bfnm{Pascal}\binits{P.}},
  \bauthor{\bsnm{Debbah},~\bfnm{Merouane}\binits{M.}},
  \bauthor{\bsnm{Ma{\"\i}da},~\bfnm{Myl{\`e}ne}\binits{M.}} \AND
  \bauthor{\bsnm{Najim},~\bfnm{Jamal}\binits{J.}}
(\byear{2011}).
\btitle{Performance of statistical tests for single-source detection using
  random matrix theory}.
\bjournal{Information Theory, IEEE Transactions on}
\bvolume{57}
\bpages{2400--2419}.
\end{barticle}
\endbibitem

\bibitem[\protect\citeauthoryear{Blanchet and Glynn}{2008}]{BlaGly07}
\begin{barticle}[author]
\bauthor{\bsnm{Blanchet},~\bfnm{J.}\binits{J.}} \AND
  \bauthor{\bsnm{Glynn},~\bfnm{P.}\binits{P.}}
(\byear{2008}).
\btitle{Effcient rare event simulation for the maximum of heavy-tailed random
  walks}.
\bjournal{Annals of Applied Probability}
\bvolume{18}
\bpages{1351-1378}.
\end{barticle}
\endbibitem

\bibitem[\protect\citeauthoryear{Blanchet, Glynn and Leder}{2012}]{BlaGlyLed12}
\begin{barticle}[author]
\bauthor{\bsnm{Blanchet},~\bfnm{J.}\binits{J.}},
  \bauthor{\bsnm{Glynn},~\bfnm{P.}\binits{P.}} \AND
  \bauthor{\bsnm{Leder},~\bfnm{K.}\binits{K.}}
(\byear{2012}).
\btitle{On {L}yapunov Inequalities and Subsolutions for Efficient Importance
  Sampling}.
\bjournal{ACM Trans. Model. Comput. Simul.}
\bvolume{22}
\bpages{13:1--13:27}.
\end{barticle}
\endbibitem

\bibitem[\protect\citeauthoryear{Blanchet and Liu}{2008}]{BL08}
\begin{barticle}[author]
\bauthor{\bsnm{Blanchet},~\bfnm{J.~H.}\binits{J.~H.}} \AND
  \bauthor{\bsnm{Liu},~\bfnm{J.}\binits{J.}}
(\byear{2008}).
\btitle{State-Dependent Importance Sampling for Regularly Varying Random
  Walks}.
\bjournal{Advances in Applied Probability}
\bvolume{40}
\bpages{1104-1128}.
\end{barticle}
\endbibitem

\bibitem[\protect\citeauthoryear{Bordenave and
  Caputo}{2014}]{bordenave2014large}
\begin{barticle}[author]
\bauthor{\bsnm{Bordenave},~\bfnm{Charles}\binits{C.}} \AND
  \bauthor{\bsnm{Caputo},~\bfnm{Pietro}\binits{P.}}
(\byear{2014}).
\btitle{A large deviation principle for Wigner matrices without Gaussian
  tails}.
\bjournal{The Annals of Probability}
\bvolume{42}
\bpages{2454--2496}.
\end{barticle}
\endbibitem

\bibitem[\protect\citeauthoryear{Chow and Teicher}{1988}]{ChowTeicher98}
\begin{bbook}[author]
\bauthor{\bsnm{Chow},~\bfnm{Y.~S.}\binits{Y.~S.}} \AND
  \bauthor{\bsnm{Teicher},~\bfnm{H.}\binits{H.}}
(\byear{1988}).
\btitle{Probability Theory, Independence, Interchangeability, Martingales.},
\bedition{2nd} ed.
\bpublisher{Springer}, \baddress{New York, NY}.
\end{bbook}
\endbibitem

\bibitem[\protect\citeauthoryear{Dembo and Zeitouni}{2009}]{dembo2009large}
\begin{bbook}[author]
\bauthor{\bsnm{Dembo},~\bfnm{Amir}\binits{A.}} \AND
  \bauthor{\bsnm{Zeitouni},~\bfnm{Ofer}\binits{O.}}
(\byear{2009}).
\btitle{Large Deviations Techniques and Applications},
\bedition{2nd} ed.
\bpublisher{Springer}.
\end{bbook}
\endbibitem

\bibitem[\protect\citeauthoryear{Dumitriu}{2003}]{dumitriu2003eigenvalue}
\begin{bphdthesis}[author]
\bauthor{\bsnm{Dumitriu},~\bfnm{Ioana}\binits{I.}}
(\byear{2003}).
\btitle{Eigenvalue Statistics for Beta-Ensembles}
\btype{PhD thesis},
\bpublisher{Massachusetts Institute of Technology}.
\end{bphdthesis}
\endbibitem

\bibitem[\protect\citeauthoryear{Dumitriu and
  Edelman}{2002}]{dumitriu2002matrix}
\begin{barticle}[author]
\bauthor{\bsnm{Dumitriu},~\bfnm{Ioana}\binits{I.}} \AND
  \bauthor{\bsnm{Edelman},~\bfnm{Alan}\binits{A.}}
(\byear{2002}).
\btitle{Matrix models for beta ensembles}.
\bjournal{Journal of Mathematical Physics}
\bvolume{43}
\bpages{5830--5847}.
\end{barticle}
\endbibitem

\bibitem[\protect\citeauthoryear{Dupuis, Leder and Wang}{2007}]{DupLedWang07}
\begin{barticle}[author]
\bauthor{\bsnm{Dupuis},~\bfnm{P.}\binits{P.}},
  \bauthor{\bsnm{Leder},~\bfnm{K.}\binits{K.}} \AND
  \bauthor{\bsnm{Wang},~\bfnm{H.}\binits{H.}}
(\byear{2007}).
\btitle{Importance sampling for sums of random variables with regularly varying
  tails}.
\bjournal{ACM Trans. Model. Comput. Simul.}
\bvolume{17}
\bpages{1-14}.
\bdoi{http://doi.acm.org/10.1145/1243991.1243995}
\end{barticle}
\endbibitem

\bibitem[\protect\citeauthoryear{Edelman and Rao}{2005}]{edelman2005random}
\begin{barticle}[author]
\bauthor{\bsnm{Edelman},~\bfnm{Alan}\binits{A.}} \AND
  \bauthor{\bsnm{Rao},~\bfnm{N~Raj}\binits{N.~R.}}
(\byear{2005}).
\btitle{Random matrix theory}.
\bjournal{Acta Numerica}
\bvolume{14}
\bpages{233--297}.
\end{barticle}
\endbibitem

\bibitem[\protect\citeauthoryear{El~Karoui}{2003}]{el2003largest}
\begin{barticle}[author]
\bauthor{\bsnm{El~Karoui},~\bfnm{Noureddine}\binits{N.}}
(\byear{2003}).
\btitle{On the largest eigenvalue of {W}ishart matrices with identity
  covariance when n, p and p/n tend to infinity}.
\bjournal{arXiv preprint math/0309355}.
\end{barticle}
\endbibitem

\bibitem[\protect\citeauthoryear{Hiai and Petz}{1998}]{hiai1998eigenvalue}
\begin{barticle}[author]
\bauthor{\bsnm{Hiai},~\bfnm{Fumio}\binits{F.}} \AND
  \bauthor{\bsnm{Petz},~\bfnm{D{\'e}nes}\binits{D.}}
(\byear{1998}).
\btitle{Eigenvalue density of the {W}ishart matrix and large deviations}.
\bjournal{Infinite Dimensional Analysis, Quantum Probability and Related
  Topics}
\bvolume{1}
\bpages{633--646}.
\end{barticle}
\endbibitem

\bibitem[\protect\citeauthoryear{James}{1964}]{james1964distributions}
\begin{barticle}[author]
\bauthor{\bsnm{James},~\bfnm{Alan~T}\binits{A.~T.}}
(\byear{1964}).
\btitle{Distributions of matrix variates and latent roots derived from normal
  samples}.
\bjournal{The Annals of Mathematical Statistics}
\bvolume{35}
\bpages{475--501}.
\end{barticle}
\endbibitem

\bibitem[\protect\citeauthoryear{Jiang and Li}{2014}]{jiang2013approximation}
\begin{barticle}[author]
\bauthor{\bsnm{Jiang},~\bfnm{Tiefeng}\binits{T.}} \AND
  \bauthor{\bsnm{Li},~\bfnm{Danning}\binits{D.}}
(\byear{2014}).
\btitle{Approximation of Rectangular Beta-{L}aguerre Ensembles and Large
  Deviations}.
\bjournal{Journal of Theoretical Probability to appear}.
\end{barticle}
\endbibitem

\bibitem[\protect\citeauthoryear{Johansson}{2000}]{johansson2000shape}
\begin{barticle}[author]
\bauthor{\bsnm{Johansson},~\bfnm{Kurt}\binits{K.}}
(\byear{2000}).
\btitle{Shape fluctuations and random matrices}.
\bjournal{Communications in mathematical physics}
\bvolume{209}
\bpages{437--476}.
\end{barticle}
\endbibitem

\bibitem[\protect\citeauthoryear{Johnstone}{2001}]{johnstone2001}
\begin{barticle}[author]
\bauthor{\bsnm{Johnstone},~\bfnm{Iain~M}\binits{I.~M.}}
(\byear{2001}).
\btitle{On the distribution of the largest eigenvalue in principal components
  analysis}.
\bjournal{Annals of statistics}
\bvolume{29}
\bpages{295--327}.
\end{barticle}
\endbibitem

\bibitem[\protect\citeauthoryear{Johnstone and Ma}{2012}]{johnstone2012fast}
\begin{barticle}[author]
\bauthor{\bsnm{Johnstone},~\bfnm{Iain~M}\binits{I.~M.}} \AND
  \bauthor{\bsnm{Ma},~\bfnm{Zongming}\binits{Z.}}
(\byear{2012}).
\btitle{Fast approach to the {T}racy-{W}idom law at the edge of {GOE} and
  {GUE}}.
\bjournal{The Annals of Applied Probability}
\bvolume{22}
\bpages{1962--1988}.
\end{barticle}
\endbibitem

\bibitem[\protect\citeauthoryear{Johnstone et~al.}{2010}]{johnstone2010rmtstat}
\begin{barticle}[author]
\bauthor{\bsnm{Johnstone},~\bfnm{IM}\binits{I.}},
  \bauthor{\bsnm{Ma},~\bfnm{Z}\binits{Z.}},
  \bauthor{\bsnm{Perry},~\bfnm{PO}\binits{P.}} \AND
  \bauthor{\bsnm{Shahram},~\bfnm{M}\binits{M.}}
(\byear{2010}).
\btitle{RMTstat: Distributions, statistics and tests derived from random matrix
  theory}.
\bjournal{Unpublished Manuscript}.
\end{barticle}
\endbibitem

\bibitem[\protect\citeauthoryear{Kwapie{\'n}, Dro\.{z}d\.{z} and
  Speth}{2003}]{KDS03}
\begin{barticle}[author]
\bauthor{\bsnm{Kwapie{\'n}},~\bfnm{J.}\binits{J.}},
  \bauthor{\bparticle{Dro\.~{z}d\.} \bsnm{{z}},~\bfnm{S.}\binits{S.}} \AND
  \bauthor{\bsnm{Speth},~\bfnm{J.}\binits{J.}}
(\byear{2003}).
\btitle{Alternation of different fluctuation regimes in the stock market
  dynamics}.
\bjournal{Physica A: Statistical Mechanics and its Applications}
\bvolume{330}
\bpages{605-621}.
\end{barticle}
\endbibitem

\bibitem[\protect\citeauthoryear{Kwissa et~al.}{2014}]{kwissa2014dengue}
\begin{barticle}[author]
\bauthor{\bsnm{Kwissa},~\bfnm{Marcin}\binits{M.}},
  \bauthor{\bsnm{Nakaya},~\bfnm{Helder~I}\binits{H.~I.}},
  \bauthor{\bsnm{Onlamoon},~\bfnm{Nattawat}\binits{N.}},
  \bauthor{\bsnm{Wrammert},~\bfnm{Jens}\binits{J.}},
  \bauthor{\bsnm{Villinger},~\bfnm{Francois}\binits{F.}},
  \bauthor{\bsnm{Perng},~\bfnm{Guey~Chuen}\binits{G.~C.}},
  \bauthor{\bsnm{Yoksan},~\bfnm{Sutee}\binits{S.}},
  \bauthor{\bsnm{Pattanapanyasat},~\bfnm{Kovit}\binits{K.}},
  \bauthor{\bsnm{Chokephaibulkit},~\bfnm{Kulkanya}\binits{K.}},
  \bauthor{\bsnm{Ahmed},~\bfnm{Rafi}\binits{R.}} \AND
  \bauthor{\bsnm{Pulendran},~\bfnm{Bali}\binits{B.}}
(\byear{2014}).
\btitle{Dengue virus infection induces expansion of a CD14+ CD16+ monocyte
  population that stimulates plasmablast differentiation}.
\bjournal{Cell host \& microbe}
\bvolume{16}
\bpages{115--127}.
\end{barticle}
\endbibitem

\bibitem[\protect\citeauthoryear{Liu and Xu}{2014a}]{LiuXu12}
\begin{barticle}[author]
\bauthor{\bsnm{Liu},~\bfnm{J.}\binits{J.}} \AND
  \bauthor{\bsnm{Xu},~\bfnm{G.}\binits{G.}}
(\byear{2014}a).
\btitle{On the Conditional Distributions and the Efficient Simulations of
  Exponential Integrals of {Gaussian} Random Fields}.
\bjournal{Annals of Applied Probability}
\bvolume{24}
\bpages{1691--1738}.
\end{barticle}
\endbibitem

\bibitem[\protect\citeauthoryear{Liu and Xu}{2014b}]{LiuXuTomacs}
\begin{barticle}[author]
\bauthor{\bsnm{Liu},~\bfnm{J.}\binits{J.}} \AND
  \bauthor{\bsnm{Xu},~\bfnm{G.}\binits{G.}}
(\byear{2014}b).
\btitle{Efficient Simulations for the Exponential Integrals of {H}\"{o}lder
  Continuous {G}aussian Random Fields}.
\bjournal{The ACM Transactions on Modeling and Computer Simulation}
\bvolume{24}
\bpages{9:1--9:24}.
\end{barticle}
\endbibitem

\bibitem[\protect\citeauthoryear{Ma}{2012}]{ma2012accuracy}
\begin{barticle}[author]
\bauthor{\bsnm{Ma},~\bfnm{Zongming}\binits{Z.}}
(\byear{2012}).
\btitle{Accuracy of the {T}racy--{W}idom limits for the extreme eigenvalues in
  white {W}ishart matrices}.
\bjournal{Bernoulli}
\bvolume{18}
\bpages{322--359}.
\end{barticle}
\endbibitem

\bibitem[\protect\citeauthoryear{Macdonald}{1998}]{macdonald1998symmetric}
\begin{bbook}[author]
\bauthor{\bsnm{Macdonald},~\bfnm{Ian~Grant}\binits{I.~G.}}
(\byear{1998}).
\btitle{Symmetric functions and Hall polynomials}.
\bpublisher{Oxford university press}.
\end{bbook}
\endbibitem

\bibitem[\protect\citeauthoryear{Ma{\"\i}da}{2007}]{maida2007large}
\begin{barticle}[author]
\bauthor{\bsnm{Ma{\"\i}da},~\bfnm{Mylene}\binits{M.}}
(\byear{2007}).
\btitle{Large deviations for the largest eigenvalue of rank one deformations of
  {G}aussian ensembles}.
\bjournal{Electron. J. Probab}
\bvolume{12}
\bpages{1131--1150}.
\end{barticle}
\endbibitem

\bibitem[\protect\citeauthoryear{Mar{\v{c}}enko and
  Pastur}{1967}]{marvcenko1967distribution}
\begin{barticle}[author]
\bauthor{\bsnm{Mar{\v{c}}enko},~\bfnm{Vladimir~A}\binits{V.~A.}} \AND
  \bauthor{\bsnm{Pastur},~\bfnm{Leonid~Andreevich}\binits{L.~A.}}
(\byear{1967}).
\btitle{Distribution of eigenvalues for some sets of random matrices}.
\bjournal{Sbornik: Mathematics}
\bvolume{1}
\bpages{457--483}.
\end{barticle}
\endbibitem

\bibitem[\protect\citeauthoryear{Muirhead}{2009}]{muirhead2009aspects}
\begin{bbook}[author]
\bauthor{\bsnm{Muirhead},~\bfnm{Robb~J}\binits{R.~J.}}
(\byear{2009}).
\btitle{Aspects of Multivariate Statistical Theory}.
\bpublisher{John Wiley \& Sons}.
\end{bbook}
\endbibitem

\bibitem[\protect\citeauthoryear{Patterson, Price and
  Reich}{2006}]{patterson2006population}
\begin{barticle}[author]
\bauthor{\bsnm{Patterson},~\bfnm{Nick}\binits{N.}},
  \bauthor{\bsnm{Price},~\bfnm{Alkes~L}\binits{A.~L.}} \AND
  \bauthor{\bsnm{Reich},~\bfnm{David}\binits{D.}}
(\byear{2006}).
\btitle{Population structure and eigenanalysis}.
\bjournal{PLoS genetics}
\bvolume{2}
\bpages{e190}.
\end{barticle}
\endbibitem

\bibitem[\protect\citeauthoryear{Ram{\'\i}rez, Rider and
  Vir{\'a}g}{2011}]{ramirez2011beta}
\begin{barticle}[author]
\bauthor{\bsnm{Ram{\'\i}rez},~\bfnm{Jos{\'e}}\binits{J.}},
  \bauthor{\bsnm{Rider},~\bfnm{Brian}\binits{B.}} \AND
  \bauthor{\bsnm{Vir{\'a}g},~\bfnm{B{\'a}lint}\binits{B.}}
(\byear{2011}).
\btitle{Beta ensembles, stochastic {A}iry spectrum, and a diffusion}.
\bjournal{Journal of the American Mathematical Society}
\bvolume{24}
\bpages{919--944}.
\end{barticle}
\endbibitem

\bibitem[\protect\citeauthoryear{Roy}{1953}]{roy1953heuristic}
\begin{barticle}[author]
\bauthor{\bsnm{Roy},~\bfnm{Samarendra~Nath}\binits{S.~N.}}
(\byear{1953}).
\btitle{On a heuristic method of test construction and its use in multivariate
  analysis}.
\bjournal{The Annals of Mathematical Statistics}
\bvolume{24}
\bpages{220--238}.
\end{barticle}
\endbibitem

\bibitem[\protect\citeauthoryear{Siegmund}{1976}]{SIE76}
\begin{barticle}[author]
\bauthor{\bsnm{Siegmund},~\bfnm{D.}\binits{D.}}
(\byear{1976}).
\btitle{Importance Sampling in the {M}onte {C}arlo Study of Sequential Tests}.
\bjournal{Ann.\ Stat.}
\bvolume{4}
\bpages{673-684}.
\end{barticle}
\endbibitem

\bibitem[\protect\citeauthoryear{Xu, Lin and Liu}{2014}]{xu2014rare}
\begin{barticle}[author]
\bauthor{\bsnm{Xu},~\bfnm{Gongjun}\binits{G.}},
  \bauthor{\bsnm{Lin},~\bfnm{Guang}\binits{G.}} \AND
  \bauthor{\bsnm{Liu},~\bfnm{Jingchen}\binits{J.}}
(\byear{2014}).
\btitle{Rare-Event Simulation for the Stochastic Korteweg--de Vries Equation}.
\bjournal{SIAM/ASA Journal on Uncertainty Quantification}
\bvolume{2}
\bpages{698--716}.
\end{barticle}
\endbibitem

\end{thebibliography}

\newpage

\title{Supplementary Material: Rare-event Analysis for Extremal Eigenvalues of white Wishart matrices}
\begin{aug}
\author{Tiefeng Jiang, Kevin Leder, and Gongjun Xu}
\affiliation{University of Minnesota}
\end{aug}

\appendix

\bigskip
This supplementary material contains proofs of technical lemmas in Appendix \ref{prooflemma} and Theorem 3 in Appendix \ref{aaax}.

\section{Auxiliary Lemmas}\label{prooflemma}
In this section we will use $\chi^2_k$ to refer to chi-squared random variables with $k$ degrees of freedom. The parameter $k$ does not have to be an integer.

\begin{lemma}\label{lemmabound}
For $\alpha_1>1/2$ and $0<\alpha_2<1/2$, we have
\begin{eqnarray*}
&&  \log(1-z)\geq -z-\alpha_1z^2, \quad \mbox{ if } z\in \Big(-1,1-\frac{1}{2\alpha_1}\Big);
\\
&&  \log(1+z)\leq z-\alpha_2z^2, \quad \mbox{ if } z\in \Big(-1,\frac{1}{2\alpha_2}-1\Big).
\end{eqnarray*}
\end{lemma}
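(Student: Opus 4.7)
The plan is to treat each inequality as a standard one-variable calculus exercise: form the difference of the two sides, check that it vanishes at $z=0$, and then show the derivative has a sign consistent with the claimed inequality on the given interval.

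For the first inequality, I would let $f(z)=\log(1-z)+z+\alpha_1 z^2$ and aim to show $f(z)\ge 0$ on $(-1,1-\tfrac{1}{2\alpha_1})$. Clearly $f(0)=0$, so it suffices to analyze the sign of
\[
f'(z)=-\frac{1}{1-z}+1+2\alpha_1 z=\frac{z\bigl((2\alpha_1-1)-2\alpha_1 z\bigr)}{1-z}.
\]
On the interval of interest the denominator $1-z$ is positive, so the sign of $f'$ is controlled by the numerator $z\bigl((2\alpha_1-1)-2\alpha_1 z\bigr)$, which vanishes exactly at $z=0$ and $z=1-\tfrac{1}{2\alpha_1}$. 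I would check that for $z\in(-1,0)$ the first factor is negative and the second is positive (since $\alpha_1>1/2$ and $-2\alpha_1 z>0$), so $f'<0$; and for $z\in(0,1-\tfrac{1}{2\alpha_1})$ both factors are positive, so $f'>0$. Hence $z=0$ is a global minimum of $f$ on the interval, giving $f\ge 0$.

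The second inequality is handled in an entirely symmetric fashion. Set $g(z)=\log(1+z)-z+\alpha_2 z^2$; then $g(0)=0$ and
\[
g'(z)=\frac{1}{1+z}-1+2\alpha_2 z=\frac{z\bigl((2\alpha_2-1)+2\alpha_2 z\bigr)}{1+z}.
\]
Because $\alpha_2<1/2$, the numerator vanishes at $z=0$ and at $z=\tfrac{1}{2\alpha_2}-1>0$, and the denominator is positive throughout $(-1,\infty)$. A case check then shows $g'>0$ on $(-1,0)$ and $g'<0$ on $(0,\tfrac{1}{2\alpha_2}-1)$, so $z=0$ is a global maximum of $g$ and $g\le 0$ on the interval, which is exactly the desired bound.

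There is no genuine obstacle here; the only thing one must keep in mind is to exploit the specific relationship between the critical point of $f'$ (respectively $g'$) and the right endpoint of the interval — both are designed so that the quadratic factor in the numerator does not change sign on the interior, ensuring that $z=0$ is the unique extremum. Once that is noted, both inequalities follow immediately from the sign analysis above.
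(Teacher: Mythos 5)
Your proof is correct, and since the paper simply states that the lemma "follows from basic calculation" and omits the argument, your one-variable sign analysis of $f'$ and $g'$ is exactly the intended elementary verification. The key observation — that the right endpoint of each interval is chosen so that the linear factor in the numerator does not change sign before the endpoint, making $z=0$ the unique extremum — is correctly identified.
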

The proof of Lemma \ref{lemmabound} follows from basic calculation and is therefore omitted.

\begin{lemma}\label{Starbucks} The following are true for chi-square distributions.

(a) $E(\chi^2_k-k)^4=O(k^2)$ as $k\to\infty$.

(b) $Var(\chi^2_k\chi^2_l)\leq 4 kl(k+l)$ for any $k>0$ and $l>0$ such that $k+l>2$, where $\chi^2_k$ and $\chi^2_l$ are independent.

(c) $Var((X+c)^2)\leq 2 Var(X^2) + 8c^2Var(X)$ for any random variable $X$ and constant $c>0$.
\end{lemma}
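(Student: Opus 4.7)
The plan is to handle the three parts separately using elementary moment calculations.

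For part (a), I will use the fact that $\chi^2_k$ has a Gamma distribution with shape $k/2$ and scale $2$, so raw moments are given by $E[(\chi^2_k)^m] = 2^m \Gamma(k/2+m)/\Gamma(k/2)$. Expanding $(\chi^2_k - k)^4$ and computing the first four raw moments yields a closed form of $E(\chi^2_k-k)^4 = 12k^2 + 48k$, which is $O(k^2)$. Alternatively, in the integer-$k$ case, one can write $\chi^2_k = \sum_{i=1}^k Z_i^2$ with $Z_i$ i.i.d.\ $N(0,1)$, apply independence to discard all cross terms whose index multi-set contains an odd multiplicity, and collect the surviving terms; this also gives $12k^2 + 48k$. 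Either route is a brief computation.

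For part (b), independence of $\chi^2_k$ and $\chi^2_l$ gives
\begin{align*}
\mathrm{Var}(\chi^2_k \chi^2_l) &= E[(\chi^2_k)^2] E[(\chi^2_l)^2] - k^2 l^2 \\
&= (2k + k^2)(2l + l^2) - k^2 l^2 = 4kl + 2kl(k+l).
\end{align*}
Then I will observe that $4kl + 2kl(k+l) \leq 4kl(k+l)$ exactly when $k+l \geq 2$, which yields the stated bound.

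For part (c), expand $(X+c)^2 = X^2 + 2cX + c^2$ and note the additive constant $c^2$ does not contribute to the variance. Then
\begin{equation*}
\mathrm{Var}((X+c)^2) = \mathrm{Var}(X^2) + 4c^2\,\mathrm{Var}(X) + 4c\,\mathrm{Cov}(X^2, X).
\end{equation*}
Bounding the covariance by Cauchy--Schwarz gives $4c\,\mathrm{Cov}(X^2,X) \leq 4c\sqrt{\mathrm{Var}(X^2)\mathrm{Var}(X)}$, and then AM--GM in the form $2ab \leq a^2 + b^2$ with $a = \sqrt{\mathrm{Var}(X^2)}$, $b = 2c\sqrt{\mathrm{Var}(X)}$ yields $4c\sqrt{\mathrm{Var}(X^2)\mathrm{Var}(X)} \leq \mathrm{Var}(X^2) + 4c^2\mathrm{Var}(X)$, producing the claimed bound $2\,\mathrm{Var}(X^2) + 8c^2\,\mathrm{Var}(X)$.

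None of the three parts presents any real obstacle; the entire lemma is essentially a bookkeeping exercise. If anything, the only mild subtlety is ensuring part (b) is stated under the natural restriction $k+l \geq 2$ (which the statement already includes), since the inequality $4kl + 2kl(k+l) \leq 4kl(k+l)$ can fail for $k+l < 2$.
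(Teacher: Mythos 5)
Your proof is correct, and part (a) takes a genuinely cleaner route than the paper's. For (a), you compute the fourth central moment in closed form from the Gamma$(k/2,2)$ representation, getting $E(\chi^2_k-k)^4 = 12k^2 + 48k$ directly for all real $k>0$. The paper instead first treats integer $k$ by writing $\chi^2_k$ as a sum of $k$ i.i.d.\ $N(0,1)^2$ variables and citing a moment bound from Chow--Teicher, then extends to non-integer $k$ via the additive decomposition $\chi^2_k \stackrel{d}{=} \chi^2_{[k]} + \chi^2_{\{k\}}$ together with $(a+b)^4 \le 8a^4 + 8b^4$. Your Gamma-moment computation avoids the integer/fractional split entirely and gives an exact constant, which is strictly more information than the paper's $O(k^2)$ bound. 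Part (b) is essentially identical to the paper's: both reduce to $(k^2+2k)(l^2+2l) - k^2l^2 = 4kl + 2kl(k+l)$ and note that this is at most $4kl(k+l)$ precisely when $k+l\ge 2$. For (c), the paper centers $(X+c)^2$ as $(X^2 - EX^2) + 2c(X-EX)$ and applies $(a+b)^2 \le 2a^2+2b^2$ in one stroke, whereas you expand the variance into a covariance term and bound it via Cauchy--Schwarz followed by AM--GM; these are the same inequality unwound, so the content is equivalent, with the paper's version being a touch more compact.
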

\begin{proof}[Proof] (a)
If $k$ is an integer, we know $E(\chi^2_k-k)^4=O(k^2)$ as $k\to\infty$ since $\chi^2_k$ is a sum of independent random variables with distribution $N(0,1)^2$ (see, e.g., p. 368 in Chapter 10 of \cite{ChowTeicher98}). In general, write $k=[k]+\{k\}$ where $[k]$ is the integer part. Then, by the additive property of the chi-square distribution, $\chi^2_k$ has the same distribution as that of $\chi^2_{[k]}+ \chi^2_{\{k\}}$ where the two random variables are independent. It follows that
\begin{eqnarray*}
E(\chi^2_k-k)^4 &\leq & 8 E(\chi^2_{[k]}-[k])^4 + 8E(\chi^2_{\{k\}}-\{k\})^4\\
& \leq & O(k^2) + 64E(\chi^2_{\{k\}})^4+ 64\{k\}^4=O(k^2)
\end{eqnarray*}
as $k\to\infty$ since $E(\chi^2_{k})^4$ is increasing in $k$ due to the representation of $\chi^2_k$ as a sum of $k$ $N(0,1)^2$.

(b) Easily since for $k+l>2$,
\begin{eqnarray*}
Var(\chi^2_k\chi^2_l) & = & E(\chi^2_k\chi^2_l)^2 - (E\chi^2_kE\chi^2_l)^2\\
& = & E(\chi^2_k)^2E(\chi^2_l)^2-k^2l^2\\
& = & (k^2+2k)(l^2+2l)-k^2l^2 \leq 4 kl(k+l).
\end{eqnarray*}

(c) Evidently,
\begin{eqnarray*}
Var((X+c)^2)&=& E[(X^2-EX^2) + 2c (X-EX)]^2\\
& \leq & 2 Var(X^2) + 8c^2Var(X).
\end{eqnarray*}
\end{proof}

\begin{lemma}\label{lemma1'} Let $(\lambda_1, \cdots, \lambda_n)$ have density $f_{n, p, \beta}(\lambda_1, \cdots, \lambda_n)$ as in (\ref{density}). Then,
$$
\sum_{i=1}^n(\lambda_i-p\beta)
= O_p(n^{1/2}p^{1/2}),\quad
\sum_{i=1}^n(\lambda_i-p\beta)^2
=\beta^2n^{2}p+O_p(np)
$$
provided $p/n\to\infty$. In addition, for any $\alpha,\gamma>0$ and $p/n\to\infty$, we have
$$
E\Big[e^{-\gamma\sum_{i=1}^n\frac{(\lambda_i-\beta p)}{p(x-\beta)}-\alpha\gamma\sum_{i=1}^n\frac{(\lambda_i-\beta p)^2}{p^2(x-\beta)^2}}\Big]
=
e^{-\alpha\gamma\frac{\beta^2}{(x-\beta)^2}[\frac{n^{2}}{p}-\frac{O(1)n^{3}}{3p^2}]}.$$
\end{lemma}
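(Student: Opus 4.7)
The plan is to invoke the Dumitriu--Edelman tridiagonal matrix model: the eigenvalues $\lambda_1,\ldots,\lambda_n$ have the same joint distribution as those of $T = \BB\BB^\top$, where $\BB$ is the $n\times n$ lower bidiagonal matrix with independent entries $d_i^2 \sim \chi^2_{\alpha_i}$, $\alpha_i := \beta(p-i+1)$ ($i=1,\ldots,n$), on the diagonal and $s_j^2 \sim \chi^2_{\beta_j}$, $\beta_j := \beta(n-j)$ ($j=1,\ldots,n-1$), on the subdiagonal. Setting $s_0 := 0$, this gives
\[\sum_i \lambda_i \;=\; \sum_{i=1}^n d_i^2 + \sum_{j=1}^{n-1} s_j^2, \qquad \sum_i \lambda_i^2 \;=\; \sum_{i=1}^n (d_i^2 + s_{i-1}^2)^2 + 2\sum_{i=1}^{n-1} d_i^2 s_i^2.\]

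For the two $O_p$ statements, I would work directly with these representations. The first sum has mean $\beta np$ and variance $2\beta np$, so $\sum_i(\lambda_i - p\beta) = O_p(\sqrt{np})$ by Chebyshev. A slightly more involved but routine moment calculation---using independence of $\{d_i,s_j\}$ and Lemma~\ref{Starbucks} for the fourth moments of chi-squares and products of chi-squares---yields $E\bigl[\sum_i(\lambda_i - p\beta)^2\bigr] = \beta^2 n^2 p + O(np)$ and $\mathrm{Var}\bigl(\sum_i(\lambda_i - p\beta)^2\bigr) = O(n^2 p^2)$, whence the second bound follows by Chebyshev again.

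The heart of the proof is the exponential moment. The key observation is that the entire target exponent is produced by the off-diagonal contribution $2\sum_{i=1}^{n-1} d_i^2 s_i^2$. Because the pairs $(d_i,s_i)$ have disjoint indices, these summands are independent across $i$; conditioning on $s_i^2 \sim \chi^2_{\beta_i}$ and using $E[e^{-t d_i^2}\mid s_i^2] = (1+2t s_i^2)^{-\alpha_i/2}$, then integrating out $s_i^2$ and Taylor-expanding in the small parameter $a := 2\alpha\gamma/[p^2(x-\beta)^2]$, one obtains
\[\log E[e^{-a d_i^2 s_i^2}] \;=\; -a\,\alpha_i\beta_i \;+\; O\bigl(a^2\,\alpha_i\beta_i(\alpha_i+\beta_i)\bigr).\]
Summing over $i$ and plugging in the closed-form evaluation $\sum_{i=1}^{n-1}\alpha_i\beta_i = \beta^2\bigl[\tfrac{n^2 p}{2} - \tfrac{n^3}{6} + O(np)\bigr]$ produces precisely $-\alpha\gamma\beta^2 n^2/[p(x-\beta)^2] + \alpha\gamma\beta^2 n^3/[3p^2(x-\beta)^2] + o(1)$, matching the target; note that both the leading term and the $n^3/p^2$ correction come from the two leading terms of the evaluation of $\sum \alpha_i\beta_i$.

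The main obstacle will be showing that the remaining terms in the exponent---the diagonal pieces $\sum_i(d_i^2 + s_{i-1}^2 - p\beta)^2$ from $S_2$ and the linear $-\gamma S_1$---together contribute only $o(1)$ to the log, despite sharing the same $d_i,s_j$ as the off-diagonal piece (so the MGF does not factor across the two parts). I would resolve this by carrying out the $d_i$-integration conditional on $(s_{i-1}^2, s_i^2)$: the $d_i$-dependent exponent is a quadratic in $d_i^2$ of the form $-\tau y^2 - \sigma(s) y + \text{const}$, and the conditional expectation has an explicit form via the tilted chi-square MGF that can be Taylor-expanded in $\tau = \alpha\gamma/[p^2(x-\beta)^2]$. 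A careful bookkeeping of the residuals, with the aid of Lemma~\ref{Starbucks} to bound chi-square moments, then shows that every additional contribution is $o(1)$ in the log under $p/n\to\infty$, completing the argument.
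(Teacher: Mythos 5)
Your decomposition and the treatment of the two $O_p$ statements mirror the paper's proof essentially verbatim: same Dumitriu--Edelman bidiagonal model, same trace identities, Chebyshev after bounding $\mathrm{Var}\bigl(\sum(\lambda_i-p\beta)^2\bigr)=O(n^2p^2)$ with Lemma~\ref{Starbucks}. Your off-diagonal computation is also the right one: the paper pairs $(\chi^2_{\beta(p+1-i)},\chi^2_{\beta(n-i)})$ across $i=1,\ldots,n-1$ so that both the linear term $\sum\lambda_i$ (after the regrouping $\chi^2_{\beta p}+\sum_{i=1}^{n-1}[\chi^2_{\beta(p-i)}+\chi^2_{\beta(n-i)}]=\sum_{i=1}^{n-1}[\chi^2_{\beta(p+1-i)}+\chi^2_{\beta(n-i)}]+\chi^2_{\beta(p-n+1)}$) and the cross-term $2\sum\chi^2_{\beta(p+1-i)}\chi^2_{\beta(n-i)}$ factor across $i$, and your conditional-on-$s_i^2$ expansion of each factor and the closed-form evaluation of $\sum\alpha_i\beta_i$ then give the claimed exponent.

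The genuine gap is in what you yourself flag as the main obstacle: the proposed resolution does not go through. You claim that, after fixing $(s_{i-1}^2,s_i^2)$, the $d_i$-dependent exponent $-\tau y^2-\sigma y$ (with $y=d_i^2$) admits an explicit conditional expectation via a ``tilted chi-square MGF'' that you can Taylor-expand in $\tau$. There is no such closed form: $E[e^{-\tau (\chi^2_k)^2 - \sigma\chi^2_k}]$ is not a tilted gamma integral, precisely because of the $y^2$ term coming from the diagonal quadratic $(d_i^2+s_{i-1}^2-p\beta)^2$. Even if one Taylor-expanded $e^{-\tau y^2}$ before integrating, the resulting bookkeeping would have to be \emph{two-sided} (the diagonal piece must be shown negligible from both above and below), and with the diagonal squares coupling chi-squares across adjacent off-diagonal pairs, the joint MGF does not factor in the way your one-pair-at-a-time conditioning suggests. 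As written, this step is a hope, not a proof.

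The paper avoids this difficulty entirely by arguing only one-sidedly and then using softness. Since each diagonal square $(d_i^2+s_{i-1}^2-p\beta)^2\geq0$ enters the exponent with a negative coefficient, dropping all of them produces an explicit \emph{upper} bound for the expectation, and that upper bound factors cleanly over the pairs $(\chi^2_{\beta(p+1-i)},\chi^2_{\beta(n-i)})$ by the regrouping above; this is what yields $E[\cdots]\lesssim e^{-\alpha\gamma\frac{\beta^2}{(x-\beta)^2}[\frac{n^2}{p}-\frac{n^3}{3p^2}]}$. For the matching lower bound one does \emph{not} attempt a precise expansion of the diagonals. Instead, the key observation is that the same upper bound holds with $\gamma$ replaced by $K\gamma$ for any $K>1$, which gives
$\sup_n E\bigl[X_n^K\bigr]<\infty$
for the normalized random variable $X_n=\exp\bigl\{-\gamma\sum\frac{\lambda_i-\beta p}{p(x-\beta)}-\alpha\gamma\sum\frac{(\lambda_i-\beta p)^2}{p^2(x-\beta)^2}+\alpha\gamma\frac{\beta^2}{(x-\beta)^2}[\frac{n^2}{p}-\frac{n^3}{3p^2}]\bigr\}$, i.e., uniform integrability of $\{X_n\}$. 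Combined with $X_n\to1$ in probability (which the two $O_p$ statements are designed to furnish), uniform integrability gives $E[X_n]\to1$, which is exactly the required asymptotic equivalence. You should replace your Taylor-expansion plan with this ``drop the nonnegative diagonal terms, then $K\gamma$ plus UI plus convergence in probability'' argument; without it, the lower bound is not established.
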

\begin{proof}[Proof]
From \cite{dumitriu2002matrix}, the eigenvalues
 $(\lambda_1,\cdots,\lambda_n)$ with density function \eqref{density}
 has the same distribution as the eigenvalues of the matrix
$\LL_{n,p,\beta}=\BB_{n,p,\beta} \BB_{n,p,\beta}^\top,$ where $\BB_{n,p,\beta}$ is a bidiagonal matrix defined as
\begin{eqnarray*}
\BB_{n,p,\beta}=\left(\begin{array}{ccccc}
\chi_{\beta p} &\\
\chi_{\beta(n-1)}& \chi_{\beta p-\beta}&\\~\\
 &\ddots&\ddots\\~\\
&&\chi_{\beta}& \chi_{\beta p-\beta(n-1)}&\\
\end{array}\right)_{n\times n}.
\end{eqnarray*}
Here all of the diagonal and sub-diagonal elements are mutually independent and the notation $\chi_{a}$ stands for the square root of  $\chi^2_a$, the chi-square distribution of degree $a$.
This gives
\begin{eqnarray*}
\LL_{n,p,\beta}=\left(\begin{array}{rrrrr}
\chi_{\beta p}^2 &\chi_{\beta(n-1)}\chi_{\beta p}\\~\\
\chi_{\beta(n-1)}\chi_{\beta p}& \chi^2_{\beta (p-1)}+\chi_{\beta(n-1)}^2&\chi_{\beta(n-2)}\chi_{\beta (p-1)}\\~\\
\ddots&\ddots&\ddots\\~\\
&\chi_{\beta+1}\chi_{\beta (p-(n-3))}& \chi_{\beta (p-(n-2))}^2+\chi_{2\beta}^2&\chi_{\beta}\chi_{\beta (p-(n-2))}&\\~\\
&&\chi_{\beta}\chi_{\beta (p-(n-2))}\quad & \chi_{\beta (p-(n-1))}^2+\chi_{\beta}^2&\\
\end{array}\right)_{n\times n}.
\end{eqnarray*}
Then, we know that
\begin{eqnarray*}
&&\sum_{i=1}^n\lambda_i\sim^d Tr(\LL_{n,p,\beta}),\quad
\sum_{i=1}^n\lambda_i^2\sim^d Tr(\LL_{n,p,\beta}\LL_{n,p,\beta}),\\
&&\sum_{i=1}^n(\lambda_i-\beta p)^2\sim^d Tr((\LL_{n,p,\beta}-\beta p \II_n)(\LL_{n,p,\beta}-\beta p \II_n)),
\end{eqnarray*} where $\II_n$ is the $n\times n$ identity matrix and $X\sim^d Y$ denotes that $X$ and $Y$ have the same distribution.
This implies
\begin{eqnarray}
\sum_{i=1}^n\lambda_i
&\sim^d&
\chi_{\beta p}^2+\sum_{i=1}^{n-1}[\chi_{\beta(p-i)}^2 + \chi_{\beta(n-i)}^2]
\sim ^d \chi_{\beta pn}^2
, \nonumber\\
\sum_{i=1}^n(\lambda_i-\beta p)^2
&\sim^d&
(\chi_{\beta p}^2-\beta p)^2+\sum_{i=1}^{n-1}[\chi_{\beta(p-i)}^2 + \chi_{\beta(n-i)}^2-\beta p]^2\notag\\
&&+2\sum_{i=1}^{n-1}\chi_{\beta(p+1-i)}^2\chi_{\beta(n-i)}^2.
\label{variance_changchun}
\end{eqnarray}
With the result for the distribution of $\sum_{i=1}^n\lambda_i$ we can apply Chebyshev's inequality to  see that
 $$\sum_{i=1}^n(\lambda_i-p\beta)
= O_p(n^{1/2}p^{1/2}).$$
In addition,
by using independence and the facts that $E\chi^2_a=a$ and $Var(\chi^2_a)=2a$ for all $a>0$, we have
\begin{eqnarray}
&&E\Big[\sum_{i=1}^n(\lambda_i-\beta p)^2\Big] \nonumber\\
&=&
E\Big[(\chi_{\beta p}^2-\beta p)^2+\sum_{i=1}^{n-1}[\chi_{\beta(p-i)}^2 + \chi_{\beta(n-i)}^2-\beta p]^2
+2\sum_{i=1}^{n-1}\chi_{\beta(p+1-i)}^2\chi_{\beta(n-i)}^2\Big]\nonumber \\
&=&
2\beta p+\sum_{i=1}^{n-1}[2\beta(p+n-2i)+\beta^2(n-2i)^2]
+2\sum_{i=1}^{n-1}\beta^2(p+1-i)(n-i) \nonumber.
\end{eqnarray}
A direct calculation gives that
\begin{eqnarray*}
\sum_{i=1}^{n-1}[2\beta(p+n-2i)+\beta^2(n-2i)^2] &=& 2\beta p(n-1)-\beta^2n^2(n-1)+4\beta^2\sum_{i=1}^{n-1}i^2,\\
2\sum_{i=1}^{n-1}\beta^2(p+1-i)(n-i)&=&\beta^2n(n-1)(p-n+1)+2\beta^2\sum_{i=1}^{n-1}i^2.
\end{eqnarray*}
Since $\sum_{i=1}^{n-1}i^2=(n-1)n(2n-1)/6,$ it follows that
\begin{eqnarray}
E\Big[\sum_{i=1}^n(\lambda_i-\beta p)^2\Big]
=
\beta^2n^2p+O(pn). \label{hot_cold}
\end{eqnarray}

Now we bound $Var(\sum_{i=1}^n(\lambda_i-\beta p)^2)$ as in (\ref{variance_changchun}). Trivially,
\begin{eqnarray*}
Var(X+Y+Z) \leq 3 (Var(X) + Var(Y) + Var(Z))
\end{eqnarray*}
for any random variables $X, Y, Z$. By (\ref{variance_changchun}),
\begin{eqnarray*}
 Var(\sum_{i=1}^n(\lambda_i-\beta p)^2)
& \leq &  3E(\chi_{\beta p}^2-\beta p)^4
 + 3\sum_{i=1}^{n-1}Var\big([\chi_{\beta(p-i)}^2 + \chi_{\beta(n-i)}^2-\beta p]^2\big)
 \\
&&+ 12 \sum_{i=1}^{n-1}Var\big(\chi_{\beta(p+1-i)}^2\chi_{\beta(n-i)}^2\big).
\end{eqnarray*}
From Lemma \ref{Starbucks}, $E(\chi_{\beta p}^2-\beta p)^4= O(p^2)$ and
\begin{eqnarray*}
 \sum_{i=1}^{n-1}Var\big(\chi_{\beta(p+1-i)}^2\chi_{\beta(n-i)}^2\big)
& \leq & 4 \beta^2  \sum_{i=1}^{n-1} (p+1-i)(n-i)(p+n+1-2i)\\
& \leq & 4\beta^2p^2n^2.
\end{eqnarray*}
Further, \begin{eqnarray*}
&&Var\big([\chi_{\beta(p-i)}^2 + \chi_{\beta(n-i)}^2-\beta p]^2\big) \\
&=& Var\big([(\chi_{\beta(p-i)}^2 -\beta(p-i))+( \chi_{\beta(n-i)}^2-\beta(n-i))+\beta (n-2i)]^2\big) \\
& \leq & 16\, E[(\chi_{\beta(p-i)}^2 -\beta(p-i))^4] + 16\,E[(\chi_{\beta(n-i)}^2-\beta(n-i))^4] \\
&&+ 8\beta^2(n-2i)^2(p+n-2i)\\
& \leq & O(p^2) + O(n^2) + O(pn^2)= O(p^2+pn^2)
\end{eqnarray*}
uniformly for all $1\leq i \leq n$.  Here the first inequality follows from an application of Lemma \ref{Starbucks}(c) and the second inequality follows from $E(\chi^2(k)-k)^4=O(k^2)$ by Lemma \ref{Starbucks}(a).
This says that
\begin{eqnarray*}
3\sum_{i=1}^{n-1}Var\big([\chi_{\beta(p-i)}^2 + \chi_{\beta(n-i)}^2-\beta p]^2\big) =O(np^2+pn^3).
\end{eqnarray*}
Combine the above to have
$
Var(\sum_{i=1}^n(\lambda_i-\beta p)^2)=O(n^2p^2),
$
which together with (\ref{hot_cold}) implies
$$\sum_{i=1}^n(\lambda_i-p\beta)^2=\beta^2n^{2}p+O_p(np).$$

Next we show that for $\alpha, \beta, \gamma>0$ and $p/n\to\infty$,
$$E\Big[e^{-\gamma\sum_{i=1}^n\frac{(\lambda_i-\beta p)}{p(x-\beta)}-\alpha\gamma\sum_{i=1}^n\frac{(\lambda_i-\beta p)^2}{p^2(x-\beta)^2}}\Big]=
 e^{-\alpha\gamma\frac{\beta^2}{(x-\beta)^2}[\frac{n^{2}}{p}-\frac{O(1)n^{3}}{3p^2}]}.$$
We have the following lower bound:
\begin{eqnarray*}
&&\sum_{i=1}^n\frac{(\lambda_i-\beta p)}{p(x-\beta)}+\alpha\sum_{i=1}^n\frac{(\lambda_i-\beta p)^2}{p^2(x-\beta)^2} \nonumber\\
&\sim^d&
\frac{1}{{p(x-\beta)}}
\Big\{ \chi_{\beta p}^2+\sum_{i=1}^{n-1}[\chi_{\beta(p-i)}^2 + \chi_{\beta(n-i)}^2]
-\beta pn\Big\}
\nonumber \\
&&+\frac{\alpha}{{p^2(x-\beta)^2}}\Big\{(\chi_{\beta p}^2-\beta p)^2+\sum_{i=1}^{n-1}[\chi_{\beta(p-i)}^2 + \chi_{\beta(n-i)}^2-\beta p]^2\notag\\
&&\quad\quad+2\sum_{i=1}^{n-1}\chi_{\beta(p+1-i)}^2\chi_{\beta(n-i)}^2\Big\} \nonumber\\
&\geq&
\frac{1}{{p(x-\beta)}}\Big\{\sum_{i=1}^{n-1}[\chi_{\beta(p+1-i)}^2 + \chi_{\beta(n-i)}^2] +\chi_{\beta(p-n+1)}^2-\beta pn\Big\} \nonumber\\
&&+\frac{2\alpha}{{p^2(x-\beta)^2}}\Big\{\sum_{i=1}^{n-1}\chi_{\beta(p+1-i)}^2\chi_{\beta(n-i)}^2\Big\}
\end{eqnarray*}
since $\chi_{\beta p}^2+\sum_{i=1}^{n-1}[\chi_{\beta(p-i)}^2 + \chi_{\beta(n-i)}^2]= \sum_{i=1}^{n-1}[\chi_{\beta(p+1-i)}^2 + \chi_{\beta(n-i)}^2] +\chi_{\beta(p-n+1)}^2$. For any $0\leq i\leq n-1$,  we have
\begin{eqnarray*}
&&E\left[e^{-\frac{\gamma}{{p(x-\beta)}} [\chi_{\beta(p+1-i)}^2 + \chi_{\beta(n-i)}^2]
 -\frac{2\alpha\gamma}{p^2(x-\beta)^2}\chi_{\beta(p+1-i)}^2\chi_{\beta(n-i)}^2}\right]\\
&=&\frac{2^{-\frac{\beta(p+1-i)+\beta(n-i)}{2}}}{\Gamma(\frac{\beta(p+1-i)}{2})
\Gamma(\frac{\beta(n-i)}{2})}
\int_0^\infty\int_0^\infty y^{\frac{\beta(p+1-i)}{2}-1}z^{\frac{\beta(n-i)}{2}-1}
e^{-\frac{\gamma(y+z)}{{p(x-\beta)}}
 -\frac{2\alpha\gamma yz}{p^2(x-\beta)^2} -\frac{y+z}{2}}dydz\\
 &=&\frac{2^{-\frac{\beta(n-i)}{2}}}{\Gamma(\frac{\beta(n-i)}{2})}
\int_0^\infty
z^{\frac{\beta(n-i)}{2}-1}
e^{-\frac{\gamma z}{{p(x-\beta)}} -\frac{z}{2}}\\
&&\times
\frac{2^{-\frac{\beta(p+1-i)}{2}}}{\Gamma(\frac{\beta(p+1-i)}{2})}
 \int_0^\infty y^{\frac{\beta(p+1-i)}{2}-1}
e^{-\frac{\gamma y}{{p(x-\beta)}}
 -\frac{2\alpha\gamma yz}{p^2(x-\beta)^2} -\frac{y}{2}}dydz\\
  &=&\frac{2^{-\frac{\beta(n-i)}{2}}}{\Gamma(\frac{\beta(n-i)}{2})}
\int_0^\infty
z^{\frac{\beta(n-i)}{2}-1}
e^{-\frac{\gamma z}{{p(x-\beta)}} -\frac{z}{2}}\times
\Big(1+ \frac{2\gamma}{p(x-\beta)}+\frac{4\alpha\gamma z}{p^2(x-\beta)^2}\Big)^{-\frac{\beta(p+1-i)}{2}}
dz.
\end{eqnarray*}
For large constant $M>[\beta]+1$, the previous display is bounded by
\begin{eqnarray*}
  &\leq&\frac{2^{-\frac{\beta(n-i)}{2}}}{\Gamma(\frac{\beta(n-i)}{2})}
\int_0^{Mn}
z^{\frac{\beta(n-i)}{2}-1}
e^{-\frac{\gamma z}{{p(x-\beta)}} -\frac{z}{2}}\times
\Big(1+ \frac{2\gamma}{p(x-\beta)}+\frac{4\alpha\gamma z}{p^2(x-\beta)^2}\Big)^{-\frac{\beta(p+1-i)}{2}}
dz \\
&&+P(\chi_{\beta (n-i)}^2>Mn)\Big(1+ \frac{2\gamma}{p(x-\beta)}+\frac{4\alpha\gamma Mn}{p^2(x-\beta)^2}\Big)^{-\frac{\beta(p+1-i)}{2}}
\\
  &\leq&\frac{2^{-\frac{\beta(n-i)}{2}}}{\Gamma(\frac{\beta(n-i)}{2})}
\int_0^{\infty}
z^{\frac{\beta(n-i)}{2}-1}
e^{-\frac{\gamma z}{{p(x-\beta)}} -\frac{z}{2}-\frac{\beta(p+1-i)}{2}[\frac{2\gamma}{p(x-\beta)}+\frac{4\alpha\gamma z}{p^2(x-\beta)^2}+O(\frac{1}{p^2})]}dz \\
&&+P(\chi_{\beta (n-i)}^2>Mn)e^{-\frac{\beta(p+1-i)}{2}[\frac{2\gamma}{p(x-\beta)}+\frac{4\alpha\gamma Mn}{p^2(x-\beta)^2}+O(\frac{1}{p^2})]}\\
&\lesssim&\Big(1+\frac{2\gamma }{{p(x-\beta)}}+\frac{4\alpha\gamma\beta(p+1-i)}{p^2(x-\beta)^2}\Big)^{-\frac{\beta(n-i)}{2}}e^{-\frac{\gamma\beta(p+1-i)}{p(x-\beta)}}\\
&&+e^{-Cn}\times e^{-\frac{\beta(p+1-i)}{2}[\frac{2\gamma}{p(x-\beta)}+\frac{4\alpha\gamma Mn}{p^2(x-\beta)^2}+O(\frac{1}{p^2})]}\\
&\sim&e^{-(\frac{2\gamma }{{p(x-\beta)}}+\frac{4\alpha\gamma\beta(p+1-i)}{p^2(x-\beta)^2}){\frac{\beta(n-i)}{2}}}e^{-\frac{\gamma\beta(p+1-i)}{p(x-\beta)}}
=e^{-\frac{\gamma\beta(p+n-2i+1) }{{p(x-\beta)}}-\frac{2\alpha\gamma\beta^2(p+1-i)(n-i)}{p^2(x-\beta)^2}},
\end{eqnarray*}
where we use the fact $\frac{2\gamma}{p(x-\beta)}+\frac{4\alpha\gamma z}{p^2(x-\beta)^2}
=O(\frac{1}{p})$
 uniformly for all $0\leq z \leq Mn$ in the second inequality due to the fact $n=o(p)$; the Chernoff bound for $\sup_{1\leq i \leq n-1}P(\chi_{\beta (n-i)}^2>Mn)\leq P(\chi_{([\beta]+1) n}^2>Mn) \leq e^{-Cn}$ with some constant $C=C_{\beta,M}>0$ (see, e.g., p. 31 from \cite{dembo2009large}) is used in the third inequality.
  Similarly, we have
\begin{eqnarray*}
E\left[e^{-\frac{\gamma}{{p(x-\beta)}}\chi_{\beta(p-n+1)}^2 }\right]
= \Big(1+\frac{2\gamma}{{p(x-\beta)}}\Big)^{-\frac{\beta(p-n+1)}{2}}\sim e^{-\frac{\gamma\beta}{x-\beta}}.
\end{eqnarray*}
Since for different $i$, the $\chi$-distributed random variables are independent,
 from the above result  we know
\begin{eqnarray*}
&&E\Big[e^{-\gamma\sum_{i=1}^n\frac{(\lambda_i-\beta p)}{p(x-\beta)}-\gamma\alpha\sum_{i=1}^n\frac{(\lambda_i-\beta p)^2}{p^2(x-\beta)^2}}\Big]\\
&\leq&E\left[e^{-\sum_{i=1}^{n-1}\left\{\frac{\gamma}{{p(x-\beta)}} [\chi_{\beta(p+1-i)}^2 + \chi_{\beta(n-i)}^2]
+\frac{2\alpha\gamma}{p^2(x-\beta)^2}\chi_{\beta(p+1-i)}^2\chi_{\beta(n-i)}^2\right\}
-\frac{\gamma}{{p(x-\beta)}}\chi_{\beta(p-n+1)}^2 +\frac{\gamma\beta n}{x-\beta}}\right]\\
 &\lesssim&e^{-\sum_{i=1}^{n-1}\{\frac{\gamma\beta(p+n-2i+1) }{{p(x-\beta)}}+\frac{2\alpha\gamma\beta^2(p+1-i)(n-i)}{p^2(x-\beta)^2}\}}e^{-\frac{\gamma\beta}{x-\beta}}e^{\frac{\gamma\beta n}{x-\beta}}\\
 &\sim&e^{-\sum_{i=1}^{n-1}\frac{2\alpha\gamma\beta^2(p+1-i)(n-i)}{p^2(x-\beta)^2}}\\
 &\sim& e^{-\alpha\gamma\frac{\beta^2}{(x-\beta)^2}[\frac{n^{2}}{p}-\frac{n^{3}}{3p^2}]},
\end{eqnarray*}
where $-\sum_{i=1}^{n-1}\frac{\gamma\beta(p+n-2i+1) }{p(x-\beta)}-\frac{\gamma\beta}{x-\beta}+ \frac{\gamma\beta n}{x-\beta}=-\frac{r\beta(n-1)}{p(x-\beta)}=o(1)$ is used in the third step. This implies that
$$\sup_{n\geq 2} E\Big[e^{-\gamma\sum_{i=1}^n\frac{(\lambda_i-\beta p)}{p(x-\beta)}-\alpha\gamma\sum_{i=1}^n
\frac{(\lambda_i-\beta p)^2}{p^2(x-\beta)^2}+\alpha\gamma\frac{\beta^2}{(x-\beta)^2}
[\frac{n^{2}}{p}-\frac{n^{3}}{3p^2}]}
\Big]< \infty.$$
Since the above result holds for any $\gamma$,  take $\gamma_K =K\gamma$ with $K>1$ to have
$$\sup_{n\geq 2} E\Big[e^{-\gamma_K\sum_{i=1}^n\frac{(\lambda_i-\beta p)}{p(x-\beta)}-\alpha\gamma_K\sum_{i=1}^n\frac{(\lambda_i-\beta p)^2}{p^2(x-\beta)^2}+\alpha\gamma_K\frac{\beta^2}{(x-\beta)^2}[\frac{n^{2}}{p}-\frac{n^{3}}{3p^2}]}\Big]< \infty.$$
This implies the uniform integrability of $e^{-\gamma\sum_{i=1}^n\frac{(\lambda_i-\beta p)}{p(x-\beta)}-\alpha\gamma\sum_{i=1}^n\frac{(\lambda_i-\beta p)^2}{p^2(x-\beta)^2}+\alpha\gamma\frac{\beta^2}{(x-\beta)^2}[\frac{n^{2}}{p}-\frac{n^{3}}{3p^2}]}$. 
Further,  by the previous results that $\sum_{i=1}^n(\lambda_i-p\beta)
= O_p(n^{1/2}p^{1/2})$ and $\sum_{i=1}^n(\lambda_i-p\beta)^2=\beta^2n^{2}p+O_p(np)$,
 we have for $p/n\to\infty$,
  $$E\Big[e^{-\gamma\sum_{i=1}^n\frac{(\lambda_i-\beta p)}{p(x-\beta)}-\alpha\gamma\sum_{i=1}^n\frac{(\lambda_i-\beta p)^2}{p^2(x-\beta)^2}}\Big]= e^{-\alpha\gamma\frac{\beta^2}{(x-\beta)^2}[\frac{n^{2}}{p}-\frac{O(1)n^{3}}{3p^2}]}.$$
This completes the proof.
\end{proof}

\begin{lemma}\label{lemma1}
Consider the order statistics $\lambda_{(1)}>\cdots>\lambda_{(n)}$ as defined in (\ref{orderdensity}).
For $p/n\rightarrow\infty$ and $\delta_n>0$ such that $\delta_n\to 0$ and $\frac{\delta_n^2}{np^{-1}\log (n^{-1}p)}\to\infty$ as $n\to\infty$, we have
 $$\log P(\lambda_{(1)}> p(\beta+\delta_n))\lesssim -\frac{p\delta_n^2}{4\beta}, \quad \log P(\lambda_{(n)}< p(\beta-\delta_n))\lesssim-\frac{p\delta_n^2}{4\beta}.$$
\end{lemma}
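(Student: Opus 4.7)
I would mimic the Laplace-method argument used to prove Theorems~\ref{theorem1}--\ref{theorem1'}, now evaluated at the shrinking threshold $x=\beta+\delta_n$. Starting from the factorization~(\ref{boston1}),
\[
n!\,f_{n,p,\beta}(\lambda_1,\ldots,\lambda_n)=nA_n\,\lambda_1^{\frac{\beta(p-n+1)}{2}-1}e^{-\lambda_1/2}\prod_{i=2}^n(\lambda_1-\lambda_i)^\beta\cdot g_{n-1,p-1,\beta}(\lambda_2,\ldots,\lambda_n),
\]
I write $P(\lambda_{(1)}>p(\beta+\delta_n))$ as the integral of this density over $\lambda_1>p(\beta+\delta_n)$ and $\lambda_1>\cdots>\lambda_n>0$. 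I first confine the integration to the typical region where $\lambda_1$ lies slightly above $p(\beta+\delta_n)$ and each $\lambda_i$ ($i\geq 2$) lies within a vanishing fraction of $p\delta_n$ of $p\beta$; the atypical remainder is negligible by the chi-square identities for $\sum \lambda_i$ and $\sum(\lambda_i-p\beta)^2$ in Lemma~\ref{lemma1'} together with standard chi-square tail bounds. On the typical region I expand $\prod_{i=2}^n(\lambda_1-\lambda_i)^\beta\approx (p\delta_n)^{\beta(n-1)}\exp(\text{linear plus quadratic in the centred }\lambda_i)$ as in~(\ref{gx6}), apply Lemma~\ref{lemma1'} with the parameter $(x-\beta)$ replaced by $\delta_n$ (the lemma remains applicable because $p\delta_n^2/n\to\infty$), and then carry out the $\lambda_1$-integration against the gamma weight $\lambda_1^{\frac{\beta(p-n+1)}{2}-1}e^{-\lambda_1/2}$.

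\noindent Combining these ingredients with Stirling's expansion~(\ref{logA}) of $A_n$ yields $\log P(\lambda_{(1)}>p(\beta+\delta_n))=B_{n,p,\beta}(\beta+\delta_n)+o(p\delta_n^2)$, where $B_{n,p,\beta}$ is as in Corollary~\ref{cor1}. A second-order Taylor expansion of the leading part $p[\tfrac{\beta}{2}-\tfrac{\beta}{2}\log\beta-\tfrac{x}{2}+\tfrac{\beta}{2}\log x]$ at $x=\beta$---whose second derivative there equals $-1/(2\beta)$---produces the dominant contribution $-p\delta_n^2/(4\beta)$, matching the curvature of the Jiang--Li rate function. The remaining pieces of $B_{n,p,\beta}(\beta+\delta_n)$, namely $\tfrac{\beta n}{2}\log(p/n)$, $\beta n\log\delta_n$, $-\beta^3n^2/(2\delta_n^2 p)$, and $O(n+\log p)$, are each $o(p\delta_n^2)$ under the hypothesis $\delta_n^2 p/(n\log(n^{-1}p))\to\infty$, giving $\log P(\lambda_{(1)}>p(\beta+\delta_n))\lesssim -p\delta_n^2/(4\beta)$. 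The bound on $P(\lambda_{(n)}<p(\beta-\delta_n))$ follows from the symmetric factorization peeling off $\lambda_n$ instead of $\lambda_1$, together with the local symmetry (to second order) of the leading part of $B_{n,p,\beta}(x)$ about $x=\beta$.

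\noindent\textbf{Main obstacle.} The most delicate correction is $\beta n\log\delta_n$: as $\delta_n\to 0$, $\log(1/\delta_n)\to\infty$. The factor $\log(n^{-1}p)$ in the hypothesis is exactly the slack needed---it forces $1/\delta_n\lesssim\sqrt{p/(n\log(n^{-1}p))}$, so that $n\log(1/\delta_n)\lesssim \tfrac{n}{2}\log(p/n)=o(p\delta_n^2)$, without which this term would overwhelm the target exponent $p\delta_n^2/(4\beta)$. Equally delicate is verifying that Lemma~\ref{lemma1'} still furnishes a useful asymptotic when its ``inverse temperature'' parameter $1/(x-\beta)=1/\delta_n$ tends to infinity; this again rests on $p\delta_n^2/n\to\infty$, which is why the exact same hypothesis appears in the statement.
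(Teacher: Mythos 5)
Your plan aims for a two-sided sharp asymptotic $\log P(\lambda_{(1)}>p(\beta+\delta_n))=B_{n,p,\beta}(\beta+\delta_n)+o(p\delta_n^2)$, but the lemma only needs a one-sided upper bound, and the paper's argument is much simpler for exactly that reason: it just bounds $\prod_{i=2}^n(\lambda_1-\lambda_i)^\beta\leq\lambda_1^{\beta(n-1)}$, integrates out $\lambda_2,\ldots,\lambda_n$ (their marginal integral is $1$), evaluates the resulting one-dimensional gamma-type integral via the elementary bound (\ref{gx1}), and then applies Stirling. No restriction to a typical region and no invocation of Lemma~\ref{lemma1'} is needed. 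Your route introduces several genuine gaps that the paper's direct route avoids entirely.

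First, you propose to apply Lemma~\ref{lemma1'} ``with $(x-\beta)$ replaced by $\delta_n$.'' That lemma is proved for fixed $x>\beta$ via a uniform-integrability argument whose estimates (e.g.\ $\tfrac{2\gamma}{p(x-\beta)}+\tfrac{4\alpha\gamma z}{p^2(x-\beta)^2}=O(1/p)$ uniformly over $0\leq z\leq Mn$) degrade as $x-\beta\to 0$; acknowledging this is ``delicate'' is not the same as re-proving the lemma in the shrinking regime, and the work to do so would be substantial. Second, your confinement of $\lambda_2,\ldots,\lambda_n$ to a narrow band around $p\beta$ before integrating is nearly circular: the exponential smallness of events like $\{\lambda_{(2)}>p(\beta+\cdot)\}$ is exactly what Lemmas~\ref{lemma1}--\ref{lemma0} establish, and Chebyshev-type concentration of $\sum\lambda_i$ and $\sum(\lambda_i-p\beta)^2$ only yields polynomial tail control, which is not enough to dominate a remainder that must compete with a probability of order $e^{-\Theta(p\delta_n^2)}$. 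Third, the ``main obstacle'' you flag, $\beta n\log\delta_n$, is in fact a negative contribution to $B_{n,p,\beta}(\beta+\delta_n)$ and therefore cannot ``overwhelm'' the upper bound; it is also an artifact of your choosing to factor out $(p\delta_n)^{\beta(n-1)}$ rather than using the crude bound $\lambda_1^{\beta(n-1)}$, in which case no $\log\delta_n$ appears at all. The term that genuinely requires the hypothesis $\delta_n^2/(np^{-1}\log(n^{-1}p))\to\infty$ is $\tfrac{\beta n}{2}\log(p/n)$, which you list but under-emphasize. Finally, for $\lambda_{(n)}$ the product $\prod_{i<n}(\lambda_i-\lambda_n)^\beta$ cannot be controlled without an a priori bound on the largest eigenvalue, which is why the paper splits into $\{\lambda_{(1)}<Mp\}$ and $\{\lambda_{(1)}>Mp\}$ and invokes (\ref{gx3}) for the latter; appealing to ``local symmetry of $B_{n,p,\beta}$ about $x=\beta$'' does not address this step.
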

\begin{proof}[Proof] Recall $A_n$ as in (\ref{An}).
We have
 \begin{eqnarray*}
&& P(\lambda_{(1)}> p(\beta+\delta_n))\\
&= & \int_{\lambda_1>\cdots>\lambda_n,~ \lambda_{1}>p(\beta+\delta_n)}  n A_n g_{n-1,p-1,\beta}(\lambda_2,\cdots,\lambda_n)  \prod_{i=2}^n(\lambda_1-\lambda_i)^\beta
\cdot \lambda_1^{\frac{\beta(p-n+1)}{2}-1}\\
&&\times
e^{-\frac{1}{2} \lambda_1}d\lambda_1\cdots d\lambda_n \\
&\leq& n A_n \int_{p(\beta+\delta_n)}^\infty \lambda_1^{\beta \frac{p+n-1}{2}-1}e^{-\frac{\lambda_1}{2}}d\lambda_1 \int_{\lambda_2>\cdots>\lambda_n}   g_{n-1,p-1,\beta}(\lambda_2,\cdots,\lambda_n)  d\lambda_2\cdots d\lambda_n \\
&= & n A_n \int_{p(\beta+\delta_n)}^\infty \lambda_1^{\beta \frac{p+n-1}{2}-1}e^{-\frac{\lambda_1}{2}}d\lambda_1.
\end{eqnarray*}
For any $x>\beta$, we have  from (5.6) in Jiang and Li (2014) that  for large $p$,
\begin{equation}\label{gx1}
\int_{px}^\infty \lambda_1^{\beta \frac{p+n-1}{2}-1}e^{-\frac{\lambda_1}{2}}d\lambda_1\leq
 \frac{2}{px-p\beta-\beta n+\beta-2} (px)^{\frac{\beta (p+n-1)}{2}}e^{-\frac{px}{2}}.
 \end{equation}
In addition, we know from \eqref{logA}
\begin{eqnarray*}
\log A_n
= -\frac{\beta p}{2}\log p -\frac{\beta p}{2}\left(\log {\beta}-1\right)
-\frac{\beta n}{2}\log n-\frac{\beta n}{2}\left(\log{\beta}-1\right)+O(\log p).
\end{eqnarray*}
The above results then imply that
 \begin{align}\label{gx0}
&\log P(\lambda_{(1)}> p(\beta+\delta_n))\notag\\
\leq &~ \log n+\log A_n+\log \int_{p(\beta+\delta_n)}^\infty \lambda_1^{\beta \frac{p+n-1}{2}-1}e^{-\frac{\lambda_1}{2}}d\lambda_1\notag\\
\leq&~ \log n -\frac{\beta p}{2}\log p -\frac{\beta p}{2}\left(\log{\beta}-1\right)
-\frac{\beta n}{2}\log n-\frac{\beta n}{2}\left(\log {\beta}-1\right)+O(\log p)\notag\\
&~-\log (p\delta_n)+\frac{\beta(p+n-1)}{2}(\log p+\log (\beta+\delta_n))-\frac{p\beta +p\delta_n}{2} +O(1)
\notag\\
= &~ -(1+o(1))\frac{p\delta_n^2}{4\beta} +\frac{\beta n}{2}\log \frac{p}{n} +O(\log p+n),\notag\\&
\end{align}
where in the last step we used  $\log (\beta+\delta_n)=\log \beta +\frac{\delta_n}{\beta}-\frac{\delta_n^2}{2\beta^2}(1 +o(1))$.
Therefore, since $p/n\rightarrow\infty$ and $\frac{\delta_n^2}{np^{-1}\log (n^{-1}p)}\to\infty$, we have
$$ \log P(\lambda_{(1)}> p(\beta+\delta_n)) \lesssim -\frac{p\delta_n^2}{4\beta}.$$
Note that this implies $P(\lambda_{(1)}> p(\beta+\delta_n))=o(1).$

\medskip
Now we study $\lambda_{(n)}$. For some big $M>0$,
 \begin{eqnarray}\label{gx4}
&&  P(\lambda_{(n)}< p(\beta-\delta_n))\notag\\
&\leq&
 P(\lambda_{(n)}< p(\beta-\delta_n), \lambda_{(1)}<Mp) +P(\lambda_{(1)}>Mp).
 \end{eqnarray}
For the first term on the right hand side of \eqref{gx4}, we have from (5.24) in Jiang and Li (2014) that
 \begin{eqnarray}
&&\log P(\lambda_{(n)}< p(\beta-\delta_n), \lambda_{(1)}<Mp) \nonumber\\
&\leq & \log n+\log A_n+\log \int_0^{p(\beta-\delta_n)} (Mp)^{\beta(n-1)}\lambda_n^{\beta \frac{p-n+1}{2}-1}e^{-\frac{\lambda_n}{2}}d\lambda_n \nonumber\\
&\leq & \log n -\frac{\beta p}{2}\log p -\frac{\beta p}{2}\left(\log {\beta}-1\right)
-\frac{\beta n}{2}\log n-\frac{\beta n}{2}\left(\log {\beta}-1\right) \nonumber\\
&&+\beta(n-1)\log p-\frac{p\beta -p\delta_n}{2} +\frac{\beta(p-n+1)}{2}(\log p+\log (\beta-\delta_n))\nonumber\\
&&-\log (p\delta_n)+O(\log p+n)\nonumber\\
&= & -(1+o(1))\frac{p\delta_n^2}{4\beta} +\frac{\beta n}{2}\log \frac{p}{n} +O(\log p+n),\label{go_come}
\end{eqnarray}
where in the second step we used the approximation in \eqref{logA}; in the last step the equality $\log (\beta-\delta_n)=\log \beta -\frac{\delta_n}{\beta}-\frac{\delta_n^2}{2\beta^2}(1 +o(1))$ and the inequality
 (5.7) in Jiang and Li (2014) that for $y<\beta$,
\begin{equation}\label{gx2}
\int_{0}^{py} \lambda_n^{\frac{\beta (p-n+1)}{2}-1}e^{-\frac{\lambda_n}{2}}d\lambda_n\leq
 \frac{2}{p(\beta-y)-\beta n} (px)^{\frac{\beta (p-n+1)}{2}}e^{-\frac{px}{2}}
 \end{equation}
 are used.
Consider the second term in \eqref{gx4}. A similar argument as in \eqref{gx0} gives that for $x>\beta$
 \begin{eqnarray}
&&\log P(\lambda_{(1)}> px)\notag\\
&\leq & \log n+\log A_n+\log \int_{px}^\infty \lambda_1^{\beta \frac{p+n-1}{2}-1}e^{-\frac{\lambda_1}{2}}d\lambda_1\notag\\
&\leq & \log n -\frac{\beta p}{2}\log p -\frac{\beta p}{2}\left(\log{\beta}-1\right)
-\frac{\beta n}{2}\log n-\frac{\beta n}{2}\left(\log {\beta}-1\right)\notag\\
&&+O(\log p)-\log (px-p\beta)+\frac{\beta(p+n-1)}{2}\log (px)-\frac{px}{2} +O(1)
\notag\\
&= & p\left(\frac{\beta}{2}-\frac{\beta}{2}\log\beta-\frac{x}{2}+\frac{\beta}{2}\log x\right)
+O(n\log\frac{p}{n}).\label{gx3}
\end{eqnarray}
Then we have $\log P(\lambda_{(1)}>Mp)\leq -C p +O(n\log\frac{p}{n})$ with a constant $C=C_M>0$.
Combining this, (\ref{gx4}) and (\ref{go_come}), we get the desired result for $\lambda_{(n)}$.
\end{proof}

\begin{lemma}\label{lemma2}
For the order statistics $\lambda_{(1)}>\cdots>\lambda_{(n)}$ as defined in (\ref{orderdensity}),
if $p/n\rightarrow\infty$ and $x>\beta>y>0$, we have
\begin{eqnarray*}
\log P(\lambda_{(1)}> px) &=& p\left(\frac{\beta}{2}-\frac{\beta}{2}\log\beta-\frac{x}{2}+\frac{\beta}{2}\log x\right)
+O(n\log\frac{p}{n}),\\
\log P(\lambda_{(n)}< py) &=& p\left(\frac{\beta}{2}-\frac{\beta}{2}\log\beta-\frac{y}{2}+\frac{\beta}{2}\log y\right)
+O(n\log\frac{p}{n}).
\end{eqnarray*}
\end{lemma}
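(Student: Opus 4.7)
My plan is to prove matching upper and lower bounds
\begin{equation*}
\log P(\lambda_{(1)}>px)=p\Big(\frac{\beta}{2}-\frac{\beta}{2}\log\beta-\frac{x}{2}+\frac{\beta}{2}\log x\Big)+O\!\left(n\log\frac{p}{n}\right),
\end{equation*}
and then treat $\lambda_{(n)}$ in a parallel fashion. The upper bound for $\lambda_{(1)}$ is already contained in the proof of Lemma \ref{lemma1}: display \eqref{gx3} is literally this inequality, since the error $O(\log p+n)$ recorded there can be absorbed into $O(n\log(p/n))$ once $p/n\to\infty$. The only real work is the matching lower bound.

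For the lower bound I would use the peeling factorization recalled at the start of Section \ref{proofmain} to write
\begin{equation*}
P(\lambda_{(1)}>px)\ \ge\ nA_n\int_{px}^\infty\lambda_1^{\frac{\beta(p-n+1)}{2}-1}e^{-\lambda_1/2}d\lambda_1\cdot E^{(n-1,p-1,\beta)}\!\!\biggl[\prod_{i=2}^n(\lambda_1-\tilde\lambda_{(i)})^\beta;\ \tilde\lambda_{(1)}<\lambda_1\biggr].
\end{equation*}
Fix any $\delta_n\to0$ satisfying the hypotheses of Lemma \ref{lemma1}. Applied to the $(n-1,p-1)$-ensemble, that lemma says the event $\{p(\beta-\delta_n)<\tilde\lambda_{(n-1)}\le\cdots\le\tilde\lambda_{(1)}<p(\beta+\delta_n)\}$ has probability at least $1/2$ for $n$ large. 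On this event the product is bounded below by $[p(x-\beta-\delta_n)]^{\beta(n-1)}$ uniformly in $\lambda_1>px$. Since the mode of $\lambda^{\frac{\beta(p-n+1)}{2}-1}e^{-\lambda/2}$ sits at $\approx\beta p<px$, the integrand is monotone decreasing on $(px,\infty)$, and an elementary one-sided estimate gives $\int_{px}^\infty\ge c(px)^{\frac{\beta(p-n+1)}{2}-1}e^{-px/2}$ for some constant $c>0$. Plugging these in and using the Stirling expansion \eqref{logA} of $\log A_n$, the log-computation becomes a term-by-term mirror of the upper bound derivation; the sole truncation residue is $\beta(n-1)\log[1-\delta_n/(x-\beta)]=O(n\delta_n)=o(n)$, comfortably inside the error budget.

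For $\lambda_{(n)}$, the same scheme works because the Laguerre exponent $\frac{\beta(p-n+1)}{2}-1$ is invariant under $(n,p)\mapsto(n-1,p-1)$, so the density also factors by peeling off the smallest eigenvalue:
\begin{equation*}
g_{n,p,\beta}(\lambda_1,\ldots,\lambda_n)=nA_n\prod_{i=1}^{n-1}(\lambda_i-\lambda_n)^\beta\,\lambda_n^{\frac{\beta(p-n+1)}{2}-1}e^{-\lambda_n/2}\,g_{n-1,p-1,\beta}(\lambda_1,\ldots,\lambda_{n-1})
\end{equation*}
under the ordering $\lambda_1>\cdots>\lambda_n$. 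The upper bound follows the split in \eqref{gx4}, using the upper bound just proved for the largest eigenvalue to control $P(\lambda_{(1)}>Mp)$ and applying \eqref{gx2} to the $\lambda_n$-integral on the complementary event. The lower bound mimics the $\lambda_{(1)}$ argument, the only change being that $\lambda_n^{\frac{\beta(p-n+1)}{2}-1}e^{-\lambda_n/2}$ is now monotone increasing on $(0,py)$ (again because the mode is $\approx\beta p$), so $\int_0^{py}\ge c(py)^{\frac{\beta(p-n+1)}{2}-1}e^{-py/2}$ by evaluation on a constant-length window at $py$.

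The main obstacle is arranging the lower bound so the error stays within $O(n\log(p/n))$. A naive localization of $\lambda_1$ to a small window $(px,p(x+\delta_n))$ introduces a penalty $e^{-p\delta_n/2}$ contributing $-p\delta_n/2$ in the log; but the Lemma \ref{lemma1} constraint $\delta_n^2 p\gg n\log(p/n)$ forces $p\delta_n\gtrsim\sqrt{np\log(p/n)}$, which is strictly larger than the permitted error. The remedy above — integrating over the entire tail $(px,\infty)$ and exploiting the monotonicity of the integrand past its mode — is precisely the point where the hypothesis $p/n\to\infty$ is used in the lower bound argument.
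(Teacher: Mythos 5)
Your proposal is correct and follows essentially the same route as the paper: peel off the extreme eigenvalue via the factorization with $A_n$, restrict the remaining $(n-1,p-1)$ spectrum to a $\delta_n$-neighborhood of $p\beta$ using Lemma~\ref{lemma1}, lower-bound the Vandermonde product by a power of the gap, control the single-eigenvalue integral by a constant multiple of the boundary value (the paper uses the integration-by-parts bound $\int_{px}^\infty\ge 2(px)^{a}e^{-px/2}$ from Jiang--Li, which is your constant $c$ made explicit), and finish with the Stirling expansion of $\log A_n$. The only cosmetic difference is that the paper localizes $\lambda_{(n)}$ to the width-$n$ window $(p(y-n/p),py)$ and invokes Jiang--Li's Theorem~3 for $P(\lambda_{n-1,(n-1)}>pr)\sim1$, whereas you use a unit-length window and Lemma~\ref{lemma1} directly; both land inside the $O(n\log(p/n))$ budget.
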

\begin{proof}[Proof] Recall $A_n$ as in (\ref{An}).
Take $\delta_n>0$ such that $\delta_n\to 0$ and $\frac{\delta_n^2}{np^{-1}\log (n^{-1}p)}\to\infty$ as $n\to\infty$.
We have
\begin{eqnarray*}
P(\lambda_{(1)}> px)
&\geq& P(\lambda_{(1)}>px, \lambda_{(2)}< p(\beta+\delta_n))\\
&= & \int_{\lambda_1>\cdots>\lambda_n,\atop \lambda_{1}>px,~ \lambda_{2}< p(\beta+\delta_n)} n A_n g_{n-1,p-1,\beta}(\lambda_2,\cdots,\lambda_n) \\
&&\times \prod_{i=2}^n(\lambda_1-\lambda_i)^\beta
\cdot \lambda_1^{\frac{\beta(p-n+1)}{2}-1}\cdot
e^{-\frac{1}{2} \lambda_1}d\lambda_1\cdots d\lambda_n \\
&\geq & nA_n \int_{px}^\infty (px-p\beta-p\delta_n)^{\beta(n-1)}\lambda_1^{\frac{\beta (p-n+1)}{2}-1}e^{-\frac{\lambda_1}{2}}d\lambda_1\\
&&
\times P( \lambda_{n-1,(1)}< p(\beta+\delta_n))\\
&\sim & nA_n \int_{px}^\infty (px-p\beta-p\delta_n)^{\beta(n-1)}\lambda_1^{\frac{\beta (p-n+1)}{2}-1}e^{-\frac{\lambda_1}{2}}d\lambda_1,
\end{eqnarray*}
where in the third step we used $\lambda_{(1)}-\lambda_{(i)}\geq \lambda_{(1)}-\lambda_{(2)}\geq px-p(\beta+\delta_n)$, $i=2,\cdots,n$,  the last step follows from Lemma \ref{lemma1} that $P( \lambda_{n-1,(1)}< p(\beta+\delta_n))\sim 1$,
and the notation  $ \lambda_{n-1,(1)}$ is a shorthand of  $\lambda_{(1)}$ with $\lambda_{(1)}>\cdots > \lambda_{(n-1)}$ having the density $g_{n-1, p-1, \beta}(\lambda_{(1)}, \cdots, \lambda_{(n-1)})$.
Applying the inequality  $\int_{px}^\infty \lambda_1^{\frac{\beta (p-n+1)}{2}-1}e^{-\frac{\lambda_1}{2}}d\lambda_1 \geq 2(px)^{\frac{\beta (p-n+1)}{2}-1}e^{-\frac{px}{2}}$ to the previous display and then using the result in \eqref{logA} we see that
\begin{eqnarray}\label{gx5}
&& \log P(\lambda_{(1)}> px) \notag\\
&\geq & \log n+\log A_n +{\beta(n-1)}\log  (px-p\beta)\notag\\
&& +[{\frac{\beta (p-n+1)}{2}-1}]\log (px) -\frac{px}{2} +o(n)
\notag\\
&=& \log n+\log A_n+[{\frac{\beta (p+n-1)}{2}-1}]\log (px) \notag\\
&&-\frac{px}{2}+{\beta(n-1)}\log  (1-\beta/x)  +o(n)\notag\\
&=&  p\left(\frac{\beta}{2}-\frac{\beta}{2}\log\beta-\frac{x}{2}+\frac{\beta}{2}\log x\right)
+\frac{\beta n}{2}\log \frac{p}{n} -\frac{\beta+1}{2}\log p
\notag\\
&& +\beta n\left(-\frac{\log x}{2}+\log(x-\beta)-\frac{{\log}\beta}{2}+\frac{1}{2}\right)+{o(n)}.
\end{eqnarray}
Evidently, $\log p=o(n\log \frac{p}{n}).$ Therefore, we obtain the corresponding lower bound. From \eqref{gx3} we know
 $$\log P(\lambda_{(1)}> px)\leq
 p\left(\frac{\beta}{2}-\frac{\beta}{2}\log\beta-\frac{x}{2}+\frac{\beta}{2}\log x\right) +O(n\log\frac{p}{n}).$$
Consequently,
  $$\log P(\lambda_{(1)}> px)=
 p\left(\frac{\beta}{2}-\frac{\beta}{2}\log\beta-\frac{x}{2}+\frac{\beta}{2}\log x\right) +O(n\log\frac{p}{n}).$$

Similarly, we have for $y<r<\beta$ and $\delta_n=n/p$,
\begin{eqnarray*}
&& P(\lambda_{(n)}< py)\\
&\geq& P(p(y-\delta_n)<\lambda_{(n)}<py, \lambda_{(n-1)}> pr)\\
&=&
\int_{\lambda_1>\cdots>\lambda_n,\atop p(y-\delta_n)<\lambda_{n}<py, \lambda_{n-1}> pr}
n A_n g_{n-1,p-1,\beta}(\lambda_1,\cdots,\lambda_{n-1})\\
&&\times
 \prod_{i=1}^{n-1}(\lambda_i-\lambda_n)^\beta
 \lambda_n^{\frac{\beta(p-n+1)}{2}-1}
e^{-\frac{1}{2} \lambda_n}d\lambda_1\cdots d\lambda_n \\
&\geq & nA_n \int_{p(y-\delta_n)}^{py} (pr-py)^{\beta(n-1)}\lambda_n^{\frac{\beta (p-n+1)}{2}-1}e^{-\frac{\lambda_n}{2}}d\lambda_n
\times P( \lambda_{n-1,(n-1)}> pr)\\
&\sim & nA_n \int_{p(y-\delta_n)}^{py} (pr-py)^{\beta(n-1)}\lambda_n^{\frac{\beta (p-n+1)}{2}-1}e^{-\frac{\lambda_n}{2}}d\lambda_n,
\end{eqnarray*}
where  $\lambda_{n-1,(n-1)}$ is a shorthand of  $\lambda_{(n-1)}$ with $\lambda_{(1)}>\cdots > \lambda_{(n-1)}$ having the density $g_{n-1, p-1, \beta}(\lambda_{1}, \cdots, \lambda_{n-1})$.
In the last step we used the approximation that $P( \lambda_{n-1,(n-1)}> pr)\sim 1$ (Theorem 3, Jiang and Li (2014)).
Together with approximation \eqref{logA}
 and inequality that
$\int_{p(y-\delta_n)}^{py}\lambda_n^{\frac{\beta (p-n+1)}{2}-1}e^{-\frac{\lambda_n}{2}}d\lambda_n\geq
p\delta_n [p(y-\delta_n)]^{\frac{\beta (p-n+1)}{2}-1}e^{-\frac{py}{2}}$,
 this implies that
  \begin{eqnarray}
 && \log P(\lambda_{(n)}< py) \nonumber\\
 &\geq& \log n+\log A_n+\beta(n-1)\log (pr-py)
 \notag\\
&& +[\frac{\beta (p-n+1)}{2}-1]\log (p(y-\delta_n))
 -\frac{py}{2}+ \log (p\delta_n)\nonumber\\
  &=& \log n+\log A_n+\beta(n-1)\log p
  \nonumber\\
  &&+[\frac{\beta (p-n+1)}{2}-1]\log (py)
 -\frac{py}{2}+O(n)\nonumber\\
 &=& \log n+\log A_n+[\frac{\beta (p+n-1)}{2}-1]\log (py)
 -\frac{py}{2}+O(n)\nonumber\\
 &=& p\left(\frac{\beta}{2}-\frac{\beta}{2}\log\beta-\frac{y}{2}+\frac{\beta}{2}\log y\right)
+O(n\log\frac{p}{n}), \label{Sky_cold}
\end{eqnarray}
where in the second step, we used $\log (p(y-\delta_n)) =\log (py)-O(\delta_n)$; the calculation for the last step is similar as that of inequality \eqref{gx5}.
  This gives the corresponding lower bound.

Now let us look at the upper bound of $\lambda_{(n)}$. For some big constant $M$,
 $ P(\lambda_{(n)}< py)\leq P(\lambda_{(n)}< py, \lambda_{(1)}<Mp)+P(\lambda_{(1)}>Mp)$.
 A similar argument as in \eqref{go_come} in the proof of Lemma \ref{lemma1} gives   that
 \begin{eqnarray}
&&
\log P(\lambda_{(n)}< py, \lambda_{(1)}<Mp) \nonumber\\
&\leq &
\log n+\log A_n+\log \int_0^{py} (Mp)^{\beta(n-1)}\lambda_n^{\beta \frac{p-n+1}{2}-1}e^{-\frac{\lambda_n}{2}}d\lambda_n \nonumber\\
&\leq &
 \log n -\frac{\beta p}{2}\log p -\frac{\beta p}{2}\left(\log {\beta}-1\right)
-\frac{\beta n}{2}\log n-\frac{\beta n}{2}\left(\log {\beta}-1\right) \nonumber\\
&&
+\beta(n-1)\log p-\frac{py}{2} +\frac{\beta(p-n+1)}{2}\log (py)
-\log (p(\beta-y))\notag\\
&&+O(\log p+n)\nonumber\\
&= &
p\left(\frac{\beta}{2}-\frac{\beta}{2}\log\beta-\frac{y}{2}+\frac{\beta}{2}\log y\right)+O(n\log\frac{p}{n}).
\end{eqnarray}
From \eqref{gx3} we have that
$\log P(\lambda_{(1)}>Mp)
\lesssim
p\left(\frac{\beta}{2}-\frac{\beta}{2}\log\beta-\frac{M}{2}+\frac{\beta}{2}\log M\right)$.
Then,
$$\log P(\lambda_{(n)}< py)\leq p\left(\frac{\beta}{2}-\frac{\beta}{2}\log\beta-\frac{y}{2}+\frac{\beta}{2}\log y\right)+O(n\log\frac{p}{n}).$$
 This and (\ref{Sky_cold}) yield the desired approximation for $\log P(\lambda_{(n)}< py)$.
\end{proof}

\begin{lemma}\label{lemma20}
Let $\lambda_{(1)}>\cdots>\lambda_{(n)}$ be defined as in (\ref{orderdensity}). Assume $n/p\to 0$.
 For $x>\beta$ and  $\delta_n >0$ with $\delta_n\to0$ and  $\frac{\delta_n^2}{p^{-1}n}\to\infty$ we have
$$P(\lambda_{(1)}> p(x+\delta_n))=o(1)P(\lambda_{(1)}> px).$$
\end{lemma}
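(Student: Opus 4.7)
The plan is to compare the two tail probabilities via their logarithmic asymptotics from Lemma \ref{lemma2}, and exploit the fact that the rate function $\phi(x) = \frac{x}{2} - \frac{\beta}{2}\log x$ is strictly increasing on $(\beta,\infty)$ so that shifting $x$ to $x+\delta_n$ produces a negative drift of order $p\delta_n$ that outweighs the error term of order $n\log(p/n)$.

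First, I would apply Lemma \ref{lemma2} at both thresholds $x$ and $x+\delta_n$ (noting that $x+\delta_n>\beta$ for large $n$ since $\delta_n\to 0$ and $x>\beta$). Subtracting the two asymptotic expressions,
\begin{equation*}
\log\frac{P(\lambda_{(1)}>p(x+\delta_n))}{P(\lambda_{(1)}>px)}
= p\Bigl[-\frac{\delta_n}{2}+\frac{\beta}{2}\log\frac{x+\delta_n}{x}\Bigr]+O\Bigl(n\log\frac{p}{n}\Bigr).
\end{equation*}
A second order Taylor expansion of $\log(1+\delta_n/x)$ gives
\begin{equation*}
p\Bigl[-\frac{\delta_n}{2}+\frac{\beta}{2}\log\frac{x+\delta_n}{x}\Bigr]
= -\frac{p\delta_n(x-\beta)}{2x}-\frac{p\beta\delta_n^2}{4x^2}+O(p\delta_n^3),
\end{equation*}
whose dominant term is $-\frac{p\delta_n(x-\beta)}{2x}$, negative since $x>\beta$.

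Next I would verify that this dominant term diverges to $-\infty$ faster than the error $O(n\log(p/n))$. The hypothesis $\delta_n^2 p/n\to\infty$ forces $\delta_n\gg\sqrt{n/p}$, hence $p\delta_n\gg\sqrt{np}$. Because $p/n\to\infty$, we have $\sqrt{p/n}/\log(p/n)\to\infty$, so $\sqrt{np}\gg n\log(p/n)$ and therefore $p\delta_n\gg n\log(p/n)$. Consequently the right hand side above tends to $-\infty$, giving $P(\lambda_{(1)}>p(x+\delta_n))/P(\lambda_{(1)}>px)\to 0$, which is the desired conclusion.

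I do not anticipate any serious obstacle: the only delicate point is checking that the single-threshold rate function estimate of Lemma \ref{lemma2} has an error term that is negligible compared with the separation $p\delta_n(x-\beta)/(2x)$ produced by shifting the threshold, and that verification reduces to the simple inequality $\sqrt{p/n}\gg \log(p/n)$ under $p/n\to\infty$.
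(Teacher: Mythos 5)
Your proposal is correct and follows essentially the same route as the paper's proof: both reduce the problem to showing $\log P(\lambda_{(1)}>p(x+\delta_n)) - \log P(\lambda_{(1)}>px) = -(1+o(1))\frac{x-\beta}{2x}p\delta_n + O\bigl(n\log\frac{p}{n}\bigr)\to -\infty$, using the condition $\delta_n^2 p/n\to\infty$ (and $p/n\to\infty$) to check $p\delta_n \gg n\log(p/n)$. The one small technical point you gloss over is that you invoke Lemma~\ref{lemma2} at the moving threshold $x+\delta_n$, whereas the lemma is stated for a fixed $x>\beta$; this requires the implicit constant in the $O(n\log\frac{p}{n})$ term to be locally uniform in the threshold, which the paper sidesteps by re-deriving the upper bound explicitly for $p(x+\delta_n)$ (following the computation in Lemma~\ref{lemma1}) rather than quoting Lemma~\ref{lemma2} off the shelf.
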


\noindent\begin{proof}[Proof]
Following a similar argument as in the proof of Lemma \ref{lemma1}, we have
 \begin{eqnarray*}
&&\log P(\lambda_{(1)}> p(x+\delta_n))\\
&\leq & \log n+\log A_n+\log \int_{p(x+\delta_n)}^\infty \lambda_1^{\beta \frac{p+n-1}{2}-1}e^{-\frac{\lambda_1}{2}}d\lambda_1\\
&\leq & \log n -\frac{\beta p}{2}\log p -\frac{\beta p}{2}\left(\log{\beta}-1\right)
-\frac{\beta n}{2}\log n-\frac{\beta n}{2}\left(\log {\beta}-1\right)+O(\log p)\\
&&-\log [p(x+\delta_n)-p\beta)]+\frac{\beta(p+n-1)}{2}(\log p+\log (x+\delta_n))-\frac{px +p\delta_n}{2} +O(1)
\\
&=&p\left(\frac{\beta}{2}-\frac{\beta}{2}\log\beta-\frac{x}{2}+\frac{\beta}{2}\log x\right)
+O(n\log\frac{p}{n})-(1+o(1))\frac{x-\beta}{2x}p\delta_n,
\end{eqnarray*}
where we used the approximations that $\log [p(x+\delta_n)-p\beta)]=\log (px-p\beta)+O(\delta_n)$
and $\log(x+\delta_n)=\log x+(1+o(1)){\delta_n}/{x}.$
By Lemma \ref{lemma2},
\begin{eqnarray*}
& & \log P(\lambda_{(1)}> p(x+\delta_n))-\log P(\lambda_{(1)}> px)\\
& = &  O(n\log\frac{p}{n})-(1+o(1))\frac{x-\beta}{2x}p\delta_n=-(1+o(1))\frac{x-\beta}{2x}p\delta_n
\end{eqnarray*}
since  $n\log\frac{p}{n}=o(p\delta_n)$ from the given condition. The proof is then complete from the fact $p\delta_n\to +\infty$.
\end{proof}

\begin{lemma}\label{lemma0}
Let $\lambda_{(1)}>\cdots>\lambda_{(n)}$ be defined as in (\ref{orderdensity}) and $x>\beta.$ Assume $p/n\rightarrow\infty$. For $\delta_n>0$ such that $\delta_n\to 0$ and $\frac{\delta_n^2}{np^{-1}}\to\infty$ as $n\to\infty$,
\begin{eqnarray}
&& P(\lambda_{(1)}>px, \lambda_{(2)}> p(\beta+\delta_n)) =o(1)P(\lambda_{(1)}>px);\label{song1}\\
&& P(\lambda_{(1)}>px, \lambda_{(n)}< p(\beta-\delta_n)) =o(1)P(\lambda_{(1)}>px).\label{song2}
\end{eqnarray}
\end{lemma}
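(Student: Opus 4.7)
The plan is to handle both inequalities via the density factorization in (\ref{boston1}), which integrates out the top eigenvalue $\lambda_1$ and reduces the remaining problem to a tail estimate on the largest (resp.~smallest) eigenvalue of the $(n-1,p-1)$-ensemble, where Lemma \ref{lemma1} applies.

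For \eqref{song1}, I will start from
\begin{eqnarray*}
& & P(\lambda_{(1)}>px,\ \lambda_{(2)}>p(\beta+\delta_n))\\
& = & \int_{\lambda_2 > p(\beta+\delta_n)} g_{n-1,p-1,\beta}(\lambda_2,\ldots,\lambda_n)\,\Big[\int_{\lambda_1 > px\vee\lambda_2} nA_n\prod_{i=2}^n(\lambda_1-\lambda_i)^\beta \lambda_1^{\frac{\beta(p-n+1)}{2}-1}e^{-\lambda_1/2}d\lambda_1\Big]\,d\lambda_2\cdots d\lambda_n,
\end{eqnarray*}
apply the crude bound $\prod_{i=2}^n(\lambda_1-\lambda_i)^\beta \leq \lambda_1^{\beta(n-1)}$ (which is off by only a factor of $e^{O(n)}$ since $\lambda_1\approx px$), and use \eqref{gx1} to control $\int_{px}^\infty \lambda_1^{\beta(p+n-1)/2-1}e^{-\lambda_1/2}\,d\lambda_1$. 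This gives
$$P(\lambda_{(1)}>px,\lambda_{(2)}>p(\beta+\delta_n)) \ \lesssim\ nA_n\cdot \frac{2}{p(x-\beta)}(px)^{\beta(p+n-1)/2-1}e^{-px/2}\cdot P^{(n-1,p-1)}(\lambda_{(1)}>p(\beta+\delta_n)),$$
where $P^{(n-1,p-1)}$ denotes the probability under $g_{n-1,p-1,\beta}$. Applying Lemma \ref{lemma1} to the $(n-1,p-1)$-ensemble yields $\log P^{(n-1,p-1)}(\lambda_{(1)}>p(\beta+\delta_n))\lesssim -p\delta_n^2/(4\beta)$.

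Comparing to the lower bound on $P(\lambda_{(1)}>px)$ from Lemma \ref{lemma2} (or equivalently \eqref{gx5}), the leading $p$-term $p(\beta/2-(\beta/2)\log\beta-x/2+(\beta/2)\log x)$ and the lower-order Stirling contribution $(\beta n/2)\log(p/n)$ from $A_n$ cancel between the numerator and the denominator, leaving
$$\log\frac{P(\lambda_{(1)}>px,\lambda_{(2)}>p(\beta+\delta_n))}{P(\lambda_{(1)}>px)}\ \lesssim\ -\frac{p\delta_n^2}{4\beta}+O(n\log(p/n)),$$
which tends to $-\infty$ under the hypothesis on $\delta_n$. The second inequality \eqref{song2} is handled by the same recipe: integrate out $\lambda_1$, reduce to the $(n-1,p-1)$-ensemble, and bound $P^{(n-1,p-1)}(\lambda_{(n-1)}<p(\beta-\delta_n))$ by the smallest-eigenvalue statement in Lemma \ref{lemma1}. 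The trivial bound on $\prod(\lambda_1-\lambda_i)^\beta$ is still adequate since the $\lambda_1$-integral is the same.

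The main obstacle is bookkeeping: one must verify that the logarithmic corrections coming from $\log A_n$ (roughly $-\frac{\beta p}{2}\log p-\frac{\beta n}{2}\log n$ after Stirling) match between the upper bound on the joint probability and the lower bound on $P(\lambda_{(1)}>px)$, so that the remaining gap is just the exponentially small factor $\exp(-p\delta_n^2/(4\beta))$ inherited from Lemma \ref{lemma1}. In particular, the step that uses $\prod(\lambda_1-\lambda_i)^\beta\leq\lambda_1^{\beta(n-1)}$ rather than the tighter $(px-p\beta)^{\beta(n-1)}$ in the lower bound only costs a harmless $(x/(x-\beta))^{\beta(n-1)}=e^{O(n)}$ factor, which is dominated by $e^{-p\delta_n^2/(4\beta)}$ because $p\delta_n^2/n\to\infty$.
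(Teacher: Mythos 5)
Your argument reproduces the paper's \emph{Step 1}, which is the easy half of the proof, but it cannot cover the full range of $\delta_n$ allowed by the lemma. The crux is that Lemma~\ref{lemma1} carries the stronger hypothesis $\frac{\delta_n^2}{np^{-1}\log(n^{-1}p)}\to\infty$. Your chain of reasoning invokes Lemma~\ref{lemma1} to conclude
$\log P^{(n-1,p-1)}(\lambda_{(1)}>p(\beta+\delta_n))\lesssim -\tfrac{p\delta_n^2}{4\beta}$, but Lemma~\ref{lemma0} only assumes $\frac{\delta_n^2}{np^{-1}}\to\infty$, which is strictly weaker since $p/n\to\infty$. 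For $\delta_n$ in the gap (for instance $\delta_n=\sqrt{np^{-1}}\,\log\log n$ when $\log(p/n)$ grows faster than $(\log\log n)^2$), Lemma~\ref{lemma1} simply does not apply: its own proof has a residual $+\tfrac{\beta n}{2}\log\tfrac{p}{n}$ term (see \eqref{gx0}) which is not dominated by $p\delta_n^2$ under the weaker assumption. Your closing line, that $-\tfrac{p\delta_n^2}{4\beta}+O(n\log(p/n))\to-\infty$ ``under the hypothesis on $\delta_n$,'' is therefore false in general.

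The paper resolves this with a two-level peeling, which is the idea your write-up is missing. After using your argument to dispose of the regime $\lambda_{(2)}>p(\beta+\delta_n')$ with a surrogate $\delta_n':=\max\{2\delta_n,\sqrt{np^{-1}}\log(n^{-1}p)\}$ (for which Lemma~\ref{lemma1} is legitimately applicable), and of $\lambda_{(1)}>p(x+\delta_n)$ via Lemma~\ref{lemma20}, it remains to control the narrow band $p(\beta+\delta_n)<\lambda_{(2)}<p(\beta+\delta_n')$. There the paper factors the joint density a \emph{second} time, pulling $\lambda_2$ out of $g_{n-1,p-1,\beta}$ exactly as $\lambda_1$ was pulled out of $g_{n,p,\beta}$. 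In that band one has $\lambda_2-\lambda_i\leq p\delta_n\,e^{\cdots}$ for $i\geq 3$, so $\prod_{i\geq 3}(\lambda_2-\lambda_i)^\beta\leq (p\delta_n)^{\beta(n-2)}e^{\cdots}$, which is a decisive gain over the trivial bound $\lambda_2^{\beta(n-2)}$: it produces a factor $e^{-(1+o(1))p\delta_n^2/(4\beta)-\Theta(\log p)}$ (see \eqref{int2}, \eqref{jsm1}, \eqref{ballpen}) without ever invoking Lemma~\ref{lemma1} in the bad band, and the residual error in the comparison to $P(\lambda_{(1)}>px)$ is then only $o(n)$ (compare \eqref{flower_rose} with \eqref{gx5}), so the weaker hypothesis $p\delta_n^2/n\to\infty$ suffices. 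The treatment of \eqref{song2} needs the symmetric maneuver with $\lambda_n$ in the band $p(\beta-\delta_n')<\lambda_{(n)}<p(\beta-\delta_n)$. In short: your proof is correct under the stronger condition $\frac{\delta_n^2}{np^{-1}\log(n^{-1}p)}\to\infty$, but to get the lemma as stated you still need to peel off the second extremal eigenvalue in the intermediate band.
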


\noindent\begin{proof}[Proof] We prove this lemma in three steps.

{\it Step 1}. We first show that (\ref{song1}) and (\ref{song2}) hold under a less restrictive condition on $\delta_n$, that is, $\delta_n>0$ such that $\delta_n\to 0$ and
$\frac{(\delta_n)^2}{np^{-1}\log (n^{-1}p)}\to\infty$ as $n\to\infty$.
In fact, recalling $A_n$ in (\ref{An}), we have
\begin{eqnarray*}
&&
P(\lambda_{(1)}>px, \lambda_{(2)}> p(\beta+\delta_n))\\
&= &
 \int_{\lambda_1>\cdots>\lambda_n,\atop~ \lambda_{1}>px,~ \lambda_{2}> p(\beta+\delta_n)}
  n A_n g_{n-1,p-1,\beta}(\lambda_2,\cdots,\lambda_n)
  \\
  &&\times
  \prod_{i=2}^n(\lambda_1-\lambda_i)^\beta
\cdot \lambda_1^{\frac{\beta(p-n+1)}{2}-1}\cdot
e^{-\frac{1}{2} \lambda_1}d\lambda_1\cdots d\lambda_n \\
&\leq & nA_n \int_{px}^\infty \lambda_1^{\beta \frac{p+n-1}{2}-1}e^{-\frac{\lambda_1}{2}}d\lambda_1
\times P( \lambda_{n-1,(1)}> p(\beta+\delta_n)),
\end{eqnarray*}
where, as seen before, $ \lambda_{n-1,(1)}$ is equal to  $\lambda_{(1)}$ with $\lambda_{(1)}>\cdots > \lambda_{(n-1)}$ having the density $g_{n-1, p-1, \beta}(\lambda_{(1)}, \cdots, \lambda_{(n-1)})$. From  Lemma \ref{lemma1}, we know $\log P( \lambda_{n-1,(1)}> p(\beta+\delta_n))\lesssim -\frac{p\delta_n^2}{4\beta} $. It follows that
\begin{eqnarray}\label{gx9}
&&
\log P(\lambda_{(1)}>px, \lambda_{(2)}> p(\beta+\delta_n))\notag\\
&\leq &
\log n +\log A_n +\log \int_{px}^\infty \lambda_1^{\beta \frac{p+n-1}{2}-1}e^{-\frac{\lambda_1}{2}}d\lambda_1 -(1+o(1))\frac{p\delta_n^2}{4\beta}  \notag\\
&=&
p\left(\frac{\beta}{2}-\frac{\beta}{2}\log\beta-\frac{x}{2}
+\frac{\beta}{2}\log x\right)+O(n\log\frac{p}{n})
-(1+o(1))\frac{p\delta_n^2}{4\beta},
\end{eqnarray}
where the second step follows exactly from the derivation of \eqref{gx3}. By Lemma \ref{lemma2},
\begin{eqnarray}\label{gx8}
& &  \log P(\lambda_{(1)}>px, \lambda_{(2)}> p(\beta+\delta_n)) - \log P(\lambda_{(1)}>px)\notag\\
 & \leq & O(n\log\frac{p}{n})
-(1+o(1))\frac{p\delta_n^2}{4\beta} \to -\infty
\end{eqnarray}
since $n\log\frac{p}{n}=o(p\delta_n^2)$ under the assumption that $\delta_n\to 0$ and
$\frac{\delta_n^2}{np^{-1}\log (n^{-1}p)}\to\infty$. The assertion (\ref{song1}) follows.

A similar argument gives that
\begin{eqnarray*}
&&
P(\lambda_{(1)}>px, \lambda_{(n)}< p(\beta-\delta_n))\\
&= &
 \int_{\lambda_1>\cdots>\lambda_n,\atop~ \lambda_{1}>px,~  \lambda_{(n)}< p(\beta-\delta_n)}
  n A_n g_{n-1,p-1,\beta}(\lambda_2,\cdots,\lambda_n)
  \\
  &&\times
   \prod_{i=2}^n(\lambda_1-\lambda_i)^\beta
\cdot \lambda_1^{\frac{\beta(p-n+1)}{2}-1}\cdot
e^{-\frac{1}{2} \lambda_1}d\lambda_1\cdots d\lambda_n \\
&\leq & nA_n \int_{px}^\infty \lambda_1^{\beta \frac{p+n-1}{2}-1}e^{-\frac{\lambda_1}{2}}d\lambda_1
\times P( \lambda_{n-1,(n-1)}< p(\beta-\delta_n)),
\end{eqnarray*}
where, $\lambda_{n-1,(n-1)}$ denotes  $\lambda_{(n-1)}$ in the order statistics $\lambda_{(1)}>\cdots > \lambda_{(n-1)}$ having the density $g_{n-1, p-1, \beta}(\lambda_{(1)}, \cdots, \lambda_{(n-1)})$. By  Lemma \ref{lemma1},  $\log P( \lambda_{n-1,(n-1)}< p(\beta-\delta_n))\lesssim -\frac{p\delta_n^2}{4\beta} $. Then, from \eqref{gx9},
\begin{eqnarray*}
&&
\log P(\lambda_{(1)}>px,  \lambda_{(n)}< p(\beta-\delta_n))\\
&\leq &
\log n +\log A_n +\log \int_{px}^\infty \lambda_1^{\beta \frac{p+n-1}{2}-1}e^{-\frac{\lambda_1}{2}}d\lambda_1 -(1+o(1))\frac{p\delta_n^2}{4\beta}  \\
&=&
p\left(\frac{\beta}{2}-\frac{\beta}{2}\log\beta-\frac{x}{2}
+\frac{\beta}{2}\log x\right)+O(n\log\frac{p}{n})
-(1+o(1))\frac{p\delta_n^2}{4\beta}.
\end{eqnarray*}
Using Lemma \ref{lemma2} in a similar way to \eqref{gx8}, we obtain
\begin{eqnarray*}
& &   \log P(\lambda_{(1)}>px, \lambda_{(n)}< p(\beta-\delta_n)) - \log P(\lambda_{(1)}>px)\\
& \leq &  O(n\log\frac{p}{n})
-(1+o(1))\frac{p\delta_n^2}{4\beta} \to -\infty.
\end{eqnarray*}
The statement (\ref{song2}) is concluded.

{\it Step 2}. We prove (\ref{song1})  under the given condition on $\delta_n>0$, that is, $\delta_n\to 0$ and
$\frac{(\delta_n)^2}{np^{-1}}\to\infty$ as $n\to\infty$. Set $\delta_n'=\max\{2\delta_n,\sqrt{np^{-1}}\log (n^{-1}p)\}$. Then
\begin{eqnarray}\label{trumpet}
2\delta_n < \delta'_n \to 0\ \  \ \mbox{and}\ \ \ \frac{\delta_n'^2}{np^{-1}\log (n^{-1}p)}\to\infty.
\end{eqnarray}
Based on the result in {\it Step} 1  and Lemma \ref{lemma20},
we only need to show that
\begin{eqnarray}\label{noodle}
&& P(px<\lambda_{(1)}<p(x+\delta_n), p(\beta+\delta_n)<\lambda_{(2)}< p(\beta+\delta'_n), \lambda_{(n)}>p(\beta-\delta_n')) \nonumber\\
&&\quad\quad= o(1)P(\lambda_{(1)}> px).
\end{eqnarray}
Let $\beta_n=\beta+\delta_n$. We have
\begin{eqnarray*}
 &&
 P(px<\lambda_{(1)}<p(x+\delta_n), p\beta_n<\lambda_{(2)}< p(\beta+\delta'_n), \lambda_{(n)}>p(\beta-\delta'_n))\\
&=&
\int_{\lambda_1>\cdots>\lambda_n,~ px<\lambda_{1}<p(x+\delta_n),\atop
~p\beta_n< \lambda_{2}< p(\beta+\delta'_n),  \lambda_{n}> p(\beta-\delta'_n)} n! f_{n,p,\beta}(\lambda_1,\cdots,\lambda_n) d\lambda_1\cdots d\lambda_n \\
&=& \int_{\lambda_1>\cdots>\lambda_n,~ px<\lambda_{1}<p(x+\delta_n),\atop
~p\beta_n< \lambda_{2}< p(\beta+\delta'_n),  \lambda_{n}> p(\beta-\delta'_n)}
 n A_n \prod_{i=2}^n(\lambda_1-\lambda_i)^\beta
\cdot \lambda_1^{\frac{\beta(p-n+1)}{2}-1}
e^{-\frac{1}{2} \lambda_1}\\
&&\quad\times g_{n-1,p-1,\beta}(\lambda_2,\cdots,\lambda_n) d\lambda_1\cdots d\lambda_n.
\end{eqnarray*}
Note that
$$(\lambda_1-\lambda_i) = (px-p\beta)\Big\{1+\frac{\lambda_1-px}{px-p\beta}-\frac{\lambda_i-p\beta}{px-p\beta}\Big\}.$$
By taking $z$ in Lemma \ref{lemmabound} as $\frac{\lambda_1-px}{px-p\beta}-\frac{\lambda_i-p\beta}{px-p\beta}$ and
 $0<\alpha_2<1/2$, we have the inequality as shown in \eqref{gx6}.
Then we have
$ \prod_{i=2}^n\left(\lambda_1-\lambda_i\right)^\beta
\leq
 (px-p\beta)^{\beta(n-1)}
\exp\big\{\beta(n+o(n))\frac{\lambda_1-px}{px-p\beta}
-\beta\sum_{i=2}^n\frac{\lambda_i-p\beta}{px-p\beta}
-\beta\alpha_2\sum_{i=2}^n(\frac{\lambda_i-p\beta}{px-p\beta})^2
\big\}.
$
 This gives the following upper bound:
  \begin{eqnarray}\label{july1}
&&P(px<\lambda_{(1)}<p(x+\delta_n), p\beta_n<\lambda_{(2)}< p(\beta+\delta'_n), \lambda_{(n)}>p(\beta-\delta'_n))
\notag\\
&\leq&
nA_n (px-p\beta )^{\beta(n-1)}
\int_{px}^{p(x+\delta_n)}
e^{(\beta n+o(n))\frac{\lambda_1-px}{px-p\beta}}
\lambda_1^{\frac{\beta (p-n+1)}{2}-1}e^{-\frac{\lambda_1}{2}}d\lambda_1
\notag\\
&&
\times \int_{\lambda_2>\cdots>\lambda_n\atop
p\beta_n<\lambda_{2}< p(\beta+\delta'_n),  \lambda_{n}> p(\beta-\delta'_n)}
e^{-\beta\sum_{i=2}^n\frac{\lambda_i-p\beta}{p(x-\beta)}
-\beta\alpha_2\sum_{i=2}^n\left(\frac{\lambda_i-p\beta}{px-p\beta}\right)^2}
\notag\\
&&\times g_{n-1,p-1,\beta}(\lambda_2,\cdots,\lambda_n)d\lambda_2\cdots d\lambda_n
\notag\\
&\leq &
nA_n (px-p\beta)^{\beta(n-1)}
\int_{px}^{p(x+\delta_n)}
e^{(\beta n+o(n))\frac{\lambda_1-px}{px-p\beta}}
\lambda_1^{\frac{\beta (p-n+1)}{2}-1}e^{-\frac{\lambda_1}{2}}d\lambda_1
\notag\\
&&\times
\int_{\lambda_2>\cdots>\lambda_n\atop
p\beta_n<\lambda_{2}< p(\beta+\delta'_n),  \lambda_{n}> p(\beta-\delta'_n)}
e^{-\beta\sum_{i=3}^n\frac{\lambda_i-p\beta}{p(x-\beta)}
-\beta\alpha_2\sum_{i=3}^n\left(\frac{\lambda_i-p\beta}{px-p\beta}\right)^2}
\notag \\
&&\quad
\times
(n-1)A_{n-1} \prod_{i=3}^n (\lambda_2-\lambda_i)^\beta
\lambda_2^{\frac{\beta(p-n+1)}{2}-1}
e^{-\frac{\lambda_2}{2}}\notag\\
&&\quad\times
 g_{n-2,p-2,\beta}(\lambda_3,\cdots,\lambda_n)d\lambda_2\cdots d\lambda_n,
\end{eqnarray}
where $A_{n-1}=\frac{c_{n-1,p-1,\beta}}{c_{n-2,p-2,\beta}}$ and  in the last step we used
$e^{-\beta\frac{\lambda_2-p\beta}{p(x-\beta)}
-\beta\alpha_2\left(\frac{\lambda_2-p\beta}{px-p\beta}\right)^2}\leq 1$ since $\lambda_2>p\beta$.
Note that for $\lambda_i<\lambda_2$, $i=3,\cdots,n$, we have
$$
(\lambda_2-\lambda_i)
=
(p\beta_n-p\beta)
\left(1+\frac{\lambda_2-p\beta_n}{p\beta_n-p\beta}-\frac{\lambda_i-p\beta}{p\beta_n-p\beta}\right)
\leq
(p\delta_n) e^{\frac{\lambda_2-p\beta_n}{p\delta_n}-\frac{\lambda_i-p\beta}{p\delta_n}}.
$$
which implies that
$\prod_{i=3}^n (\lambda_2-\lambda_i)^\beta
\leq
(p\delta_n)^{\beta(n-2)} \exp\{\beta(n-2)\frac{\lambda_2-p\beta_n}{p\delta_n}
-\beta\sum_{i=3}^n\frac{\lambda_i-p\beta}{p\delta_n}\}$. Therefore, we have
\begin{eqnarray}
&&P(px<\lambda_{(1)}<p(x+\delta_n), p\beta_n<\lambda_{(2)}< p(\beta+\delta'_n), \lambda_{(n)}>p(\beta-\delta'_n))\notag\\
&\quad\leq&
nA_n (px-p\beta)^{\beta(n-1)}
\int_{px}^{p(x+\delta_n)}
\lambda_1^{\frac{\beta (p-n+1)}{2}-1}
e^{(\beta n+o(n))\frac{\lambda_1-px}{px-p\beta}-\frac{\lambda_1}{2}}d\lambda_1
\label{int1}\\
&&
\times
(n-1)A_{n-1} (p\delta_n)^{\beta(n-2)} \int_{p\beta_n}^{p(\beta+\delta'_n)}
\lambda_2^{\frac{\beta(p-n+1)}{2}-1}
e^{\beta(n-2)\frac{\lambda_2-p\beta_n}{p\delta_n} -\frac{\lambda_2}{2}} d\lambda_2
\label{int2}\\
&&
\times
\int_{\lambda_3>\cdots>\lambda_n\atop
\lambda_{3}< p(\beta+\delta'_n),  \lambda_{n}> p(\beta-\delta'_n)}
e^{-(1+\frac{x-\beta}{\delta_n})\beta\sum_{i=3}^n\frac{\lambda_i-p\beta}{p(x-\beta)}
-\beta\alpha_2\sum_{i=3}^n\left(\frac{\lambda_i-p\beta}{px-p\beta}\right)^2}
\label{int3}\\
&&
\quad\times g_{n-2,p-2,\beta}(\lambda_3,\cdots,\lambda_n)d\lambda_3\cdots d\lambda_n.\notag
\end{eqnarray}
Now we analyze the three terms \eqref{int1}, \eqref{int1} and \eqref{int3} one by one.

{\it The estimate of \eqref{int1}}. Note that
\begin{align}
 \eqref{int1}
=&~
nA_n (px-p\beta)^{\beta(n-1)}
\int_{0}^{p\delta_n}
(\lambda_1+px)^{ \frac{\beta(p-n+1)}{2}-1}
e^{(\beta n+o(n))\frac{\lambda_1}{px-p\beta}-\frac{\lambda_1+px}{2}}d\lambda_1\notag\\
\leq&~
nA_n (px-p\beta)^{\beta(n-1)}  e^{-\frac{px}{2}} (px)^{\frac{\beta(p-n+1)}{2}-1}
\notag\\
& \int_{0}^{p\delta_n}
e^{\{\frac{\beta(p-n+1)}{2}-1\}\frac{\lambda_1}{px}+
(\beta n+o(n))\frac{\lambda_1}{px-p\beta} -\frac{\lambda_1}{2}}
d\lambda_1 \notag\\
\sim&~ \frac{2x}{x-\beta}nA_n (px-p\beta)^{\beta(n-1)} (px)^{\frac{\beta(p-n+1)}{2}-1} e^{-\frac{px}{2}},\notag\\&
 \label{flower_rose}
\end{align}
where in the first step we changed the variable $\lambda_1$ to $\lambda_1+px$ for the integral; in the second step we used $(\lambda_1+px)\leq(px)\exp\{\lambda_1/px\}$; and the last step follows from the fact that $p\delta_n\to\infty$, $\{\frac{\beta(p-n+1)}{2}-1\}\frac{\lambda_1}{px}+
(\beta n+o(n))\frac{\lambda_1}{px-p\beta} -\frac{\lambda_1}{2}\sim \frac{\beta-x}{2x}\lambda_1$.
Following the first inequality in \eqref{gx5}, where the quantity on the right-hand-side is log of $\eqref{flower_rose}$ up to a $o(n)$ term, we have
\begin{eqnarray}\label{estimate1}
\eqref{int1}\leq P(\lambda_{(1)}>px)e^{o(n)}.
\end{eqnarray}

{\it The estimate of \eqref{int2}}. Observe that the term \eqref{int2} is equal to (using change of variable from $\lambda_2$ to $\lambda_2+p\beta_n$ for the integral):
\begin{align}\label{eq:2int}
&
(n-1)A_{n-1} (p\delta_n)^{\beta(n-2)}
\int_{0}^{p(\delta'_n-\delta_n)}
(\lambda_2+p\beta_n)^{\frac{\beta(p-n+1)}{2}-1}
e^{\beta(n-2)\frac{\lambda_2}{p\delta_n} -\frac{\lambda_2+p\beta_n}{2}} d\lambda_2\notag\\
\leq&
(n-1)A_{n-1} (p\delta_n)^{\beta(n-2)}
(p\beta_n)^{\frac{\beta(p-n+1)}{2}-1}\notag\\
&\times
\int_{0}^{p(\delta'_n-\delta_n)}
e^{(\frac{\beta(p-n+1)}{2}-1)\frac{\lambda_2}{p\beta_n}
+\beta(n-2)\frac{\lambda_2}{p\delta_n} -\frac{\lambda_2+p\beta_n}{2}} d\lambda_2\notag\\
\lesssim &
(n-1)A_{n-1} (p\delta_n)^{\beta(n-2)}
 (p\beta_n)^{\frac{\beta(p-n+1)}{2}-1}e^{-\frac{p\beta_n}{2}}
\int_{0}^{p(\delta'_n-\delta_n)}
e^{-\frac{\delta_n\lambda_2}{2\beta_n}
+\beta(n-2)\frac{\lambda_2}{p\delta_n}} d\lambda_2,\notag\\&
\end{align}
where in the first inequality, we used
$$(\lambda_2+p\beta_n)^{\frac{\beta(p-n+1)}{2}-1}
\leq
 (p\beta_n)^{\frac{\beta(p-n+1)}{2}-1}e^{(\frac{\beta(p-n+1)}{2}-1)\frac{\lambda_2}{p\beta_n}};$$
 in the second inequality, we used
 $$
 (\frac{\beta(p-n+1)}{2}-1)\frac{\lambda_2}{p\beta_n}-\frac{\lambda_2}{2}
 \lesssim -\frac{\lambda_2\delta_n}{2\beta_n} .$$
Since $\delta^2_n/(np^{-1})\to\infty$ by assumption, we see
$\beta(n-2)\frac{1}{p\delta_n}=o(\delta_n)$ and we have
the integral term  in \eqref{eq:2int} is $ \Theta({\delta_n}^{-1})\leq \sqrt{p/n}$ for big $p$.
Note that $A_{n-1}=\frac{c_{n-1,p-1,\beta}}{c_{n-2,p-2,\beta}}$. From  approximation \eqref{logA}, we have
\begin{align}\label{logA2}
\log A_{n-1} \sim  -\frac{\beta (p-1)}{2}\log p -\frac{\beta p}{2}\left(\log {\beta}-1\right)
-\frac{\beta n}{2}\log n
+\frac{1}{2}\log \frac{\beta p}{2}+O(n).\notag\\
\end{align}
Using \eqref{logA2}, a similar derivation as in \eqref{gx0} gives that
\begin{align}
& \log \mbox{ of  }\eqref{eq:2int}
\notag\\
\lesssim&~
 \log(n-1)+\log A_{n-1}+\beta(n-2)\log (p\delta_n)\notag\\
&~ + [\frac{\beta(p-n+1)}{2}-1]\log(p\beta_n)
-\frac{p\beta_n}{2}+\frac{1}{2}\log\frac{p}{n}
\notag\\
\sim&~ -\frac{\beta (p-1)}{2}\log p -\frac{\beta p}{2}\left(\log {\beta}-1\right)
-\frac{\beta n}{2}\log n
+\frac{1}{2}\log \frac{\beta p}{2}+O(n)
\notag\\
&~ +\beta(n-2)\log (p\delta_n)+ [\frac{\beta(p-n+1)}{2}-1]\log(p\beta_n)
-\frac{p\beta_n}{2}+\frac{1}{2}\log\frac{p}{n}
\notag\\
=&~-(1+o(1))\frac{p\delta_n^2}{4 \beta}
- \frac{\beta n}{2} \log(pn) +\beta n\log(p\delta_n) + O(n)-\Theta(\log p)
\notag\\
=&~
-(1+o(1))\frac{p\delta_n^2}{4 \beta}
 +\beta n\log(\sqrt{\frac{p}{n}}\delta_n) + O(n)-\Theta(\log p),\notag\\&
 \label{jsm1}
\end{align}
where in the third step we used $\log(\beta_n)=\log\beta+\delta_n/\beta-(1+o(1))\delta_n^2/(2\beta^2).$
The dominating term in the above display is $-(1+o(1))\frac{p\delta_n^2}{4 \beta} -\Theta(\log p)$ since $n=o(\frac{p\delta_n^2}{4 \beta})$
and $\beta n\log(\sqrt{\frac{p}{n}}\delta_n)=o(\frac{p\delta_n^2}{4 \beta})$, which follows from
$\frac{1}{2}\log(\frac{p}{n}\delta_n^2)=o({\frac{p}{n}}\delta^2_n)$ given ${\frac{p}{n}}\delta^2_n\to\infty$.
This gives
\begin{eqnarray}\label{ballpen}
\log \mbox{ of }\eqref{int2}= -(1+o(1))\frac{p\delta_n^2}{{4} \beta} -\Theta(\log p).
\end{eqnarray}

{\it The estimate of \eqref{int3}}. We have
\begin{eqnarray*}
\eqref{int3}
&\leq&
 E[e^{-(1+\frac{x-\beta}{\delta_n})\beta\sum_{i=3}^n\frac{\lambda_i-p\beta}{p(x-\beta)}
-\beta\alpha_2\sum_{i=3}^n\left(\frac{\lambda_i-p\beta}{px-p\beta}\right)^2}]\\
 &\leq&
 E[e^{-(1+\frac{x-\beta}{\delta_n})\beta\sum_{i=3}^n\frac{\lambda_i-p\beta}{p(x-\beta)}}].
\end{eqnarray*}
From \eqref{variance_changchun} in Lemma \ref{lemma1'}, $\sum_{i=3}^n \lambda_i$ in the above expectation follows distribution $\chi_{\beta (p-2) (n-2)}^2$ (note that here $\lambda_i$'s have density $g_{n-2,p-2,\beta}(\lambda_3,\cdots,\lambda_n)$). Since $Ee^{t\chi_{k}^2}=(1-2t)^{-k/2}$ for $t<\frac{1}{2}$,
 we have
\begin{eqnarray}\label{coffee_sofa}
(\ref{int3}) & \leq &
\Big(1+\frac{2\beta}{p(x-\beta)}[1+\frac{x-\beta}{\delta_n}]\Big)^{-\beta (p-2) (n-2)/2}\times e^{\frac{(n-2)\beta^2}{\delta_n}(1+\frac{\delta_n}{x-\beta})} \nonumber\\
& = & \exp\Big\{-\beta \frac{(p-2) (n-2)}{2}\Big(\frac{2\beta}{p(x-\beta)}[1+\frac{x-\beta}{\delta_n}]+O\big(\frac{1}{p^2\delta_n^2}\big)\Big)
\notag\\
&&+\frac{(n-2)\beta^2}{\delta_n}
(1+\frac{\delta_n}{x-\beta})\Big\} \nonumber\\
&= & \exp\big\{O(\frac{n}{p\delta_n}) +O(\frac{n}{p\delta_n^2}) )\big\} =O(1)
\end{eqnarray}
where in the last step we used the fact $\frac{2\beta}{p(x-\beta)}[1+\frac{x-\beta}{\delta_n}]=O(\frac{1}{p\delta_n})$ and $ (1+\epsilon)^{a}=a(\epsilon +o(\epsilon^2))$ as $\epsilon \to 0$ for any $a\in \mathbb{R}.$
Combing(\ref{estimate1}), (\ref{ballpen}) with (\ref{coffee_sofa}), we conclude that
\begin{eqnarray}\label{gx10}
&& P(\lambda_{(1)}> px, p(\beta+\delta_n)<\lambda_{(2)}< p(\beta+\delta'_n), \lambda_{(n)}>p(\beta-\delta'_n))
\notag\\
&\lesssim& P(\lambda_{(1)}> px)\times e^{-(1+o(1))\frac{p\delta_n^2}{{4} \beta} -\Theta(\log p)+o(n)}
\notag\\
&=&o(1)P(\lambda_{(1)}> px),
\end{eqnarray}
where the last step follows from $-(1+o(1))\frac{p\delta_n^2}{{4} \beta} -\Theta(\log p)+o(n)\to-\infty$ since  $\delta_n^2n^{-1}p\to\infty$ and $\Theta(\log p)>0$. This completes the proof of (\ref{song1}).

\medskip
{\it Step 3}. We prove (\ref{song2})  under the given condition on $\delta_n$ with  $\delta_n\to 0$ and $\frac{\delta_n^2}{np^{-1}}\to\infty$. Similar to {\it Step 2}, set $\delta_n'=\max\{2\delta_n,\sqrt{np^{-1}}\log (n^{-1}p)\}$.
Following the result in {\it Step 1}  and Lemma \ref{lemma20},
it suffices to show that
\begin{eqnarray*}
& & P(px<\lambda_{(1)}<p(x+\delta_n), \lambda_{(2)}<p(\beta+\delta'_n), p(\beta-\delta'_n)<\lambda_{(n)}< p(\beta-\delta_n))\\
&= & o(1)P(\lambda_{(1)}>px).
\end{eqnarray*}
Similar to the derivation of \eqref{july1}, using inequality \eqref{gx6}, we have for $0<\alpha_2<1/2$
 \begin{eqnarray*}
&&P(px<\lambda_{(1)}<p(x+\delta_n), \lambda_{(2)}<p(\beta+\delta'_n), p(\beta-\delta'_n)<\lambda_{(n)}< p(\beta-\delta_n))\\
&=&
\int_{\lambda_1>\cdots>\lambda_n,~ px<\lambda_{1}<p(x+\delta_n),\atop
~ \lambda_{2}< p(\beta+\delta'_n), p(\beta-\delta'_n)<\lambda_{n}<p(\beta-\delta_n)}
 n A_n \prod_{i=2}^n(\lambda_1-\lambda_i)^\beta
\cdot \lambda_1^{\frac{\beta(p-n+1)}{2}-1}
e^{-\frac{1}{2} \lambda_1}\\
&&\quad\times g_{n-1,p-1,\beta}(\lambda_2,\cdots,\lambda_n) d\lambda_1\cdots d\lambda_n\\
&\leq&
 nA_n (px-p\beta)^{\beta(n-1)}
\int_{px}^{p(x+\delta_n)}
e^{(\beta n+o(n))\frac{\lambda_1-px}{px-p\beta} }
\lambda_1^{\frac{\beta (p-n+1)}{2}-1}e^{-\frac{\lambda_1}{2}}d\lambda_1\\
&&\times
\int_{\lambda_2>\cdots>\lambda_n, \lambda_{2}<p(\beta+\delta'_n)\atop
p(\beta-\delta'_n)<\lambda_{n}< p(\beta-\delta_n)}
e^{-\beta\sum_{i=2}^n\frac{\lambda_i-p\beta}{p(x-\beta)}
-\beta\alpha_2\sum_{i=2}^n\left(\frac{\lambda_i-p\beta}{px-p\beta}\right)^2} \\
&&\quad\times
g_{n-1,p-1,\beta}(\lambda_2,\cdots,\lambda_n)d\lambda_2\cdots d\lambda_n\\
&\lesssim &
nA_n (px-p\beta)^{\beta(n-1)}
\int_{px}^{p(x+\delta_n)}
\lambda_1^{\frac{\beta (p-n+1)}{2}-1}
e^{(\beta n+o(n))\frac{\lambda_1-px}{px-p\beta} -\frac{\lambda_1}{2}}d\lambda_1\\
&&\times
\int_{\lambda_2>\cdots>\lambda_n, \lambda_{2}<p(\beta+\delta'_n)\atop
p(\beta-\delta'_n)<\lambda_{n}< p(\beta-\delta_n)}
 e^{-\beta\sum_{i=2}^{n-1}\frac{\lambda_i-p\beta}{p(x-\beta)}
-\beta\alpha_2\sum_{i=2}^{n-1}\left(\frac{\lambda_i-p\beta}{px-p\beta}\right)^2} \\
&&\quad \times
(n-1)A_{n-1} \prod_{i=2}^{n-1} (\lambda_i-\lambda_n)^\beta
\lambda_n^{\frac{\beta(p-n+1)}{2}-1}
e^{-\frac{\lambda_n}{2}} \\
&&\quad\times
g_{n-2,p-2,\beta}(\lambda_2,\cdots,\lambda_{n-1})d\lambda_2\cdots d\lambda_n,
\end{eqnarray*}
where the fact $\frac{\lambda_n-p\beta}{p(x-\beta)}=o(1)$ under the constraint in the integral is applied in the last step.
Note that for $\lambda_i>\lambda_n$, $i=2,\cdots,n-1$, we have the upper bound
$$(\lambda_i-\lambda_n)= (p\delta_n)
\left(1+\frac{\lambda_i-p\beta}{p\delta_n}-\frac{\lambda_n-p(\beta-\delta_n)}{p\delta_n}\right)
\leq  (p\delta_n) e^{\frac{\lambda_i-p\beta}{p\delta_n}-\frac{\lambda_n-p(\beta-\delta_n)}{p\delta_n}}.$$
Therefore, similar to the derivation for \eqref{int1}-\eqref{int3}, we have
\begin{align}
&P(px<\lambda_{(1)}<p(x+\delta_n), \lambda_{(2)}<p(\beta+\delta'_n), p(\beta-\delta'_n)<\lambda_{(n)}< p(\beta-\delta_n)) \nonumber\\
 &\lesssim~
nA_n (px-p\beta)^{\beta(n-1)}
\int_{px}^{p(x+\delta_n)}
\lambda_1^{\frac{\beta (p-n+1)}{2}-1}
e^{(\beta n+o(n))\frac{\lambda_1-px}{px-p\beta} -\frac{\lambda_1}{2}}d\lambda_1 \notag\\
\label{desk_1}&\\
&
\times
(n-1)A_{n-1}  (p\delta_n)^{\beta(n-2)}
\int_{p(\beta-\delta'_n)}^{p(\beta-\delta_n)}
\lambda_n^{\frac{\beta(p-n+1)}{2}-1}
e^{-\beta(n-2)\frac{\lambda_n-p(\beta-\delta_n)}{p\delta_n}-\frac{\lambda_n}{2}}
d\lambda_n \notag\\
\label{desk_2}&\\
&
\times
\int_{\lambda_2>\cdots>\lambda_n,\atop \lambda_{2}<p(x+\delta'_n)}
e^{(-1+\frac{x-\beta}{\delta_n})\beta\sum_{i=2}^{n-1}\frac{\lambda_i-p\beta}{p(x-\beta)}
-\beta\alpha_2\sum_{i=2}^{n-1}\left(\frac{\lambda_i-p\beta}{px-p\beta}\right)^2}
\notag\\
&\quad\quad\quad\quad\quad\quad\times g_{n-2,p-2,\beta}(\lambda_2,\cdots,\lambda_{n-1})d\lambda_2\cdots d\lambda_{n-1} \nonumber\\
& \label{desk_3}
\end{align}
From (\ref{int1}) and (\ref{estimate1}), we get
\begin{eqnarray}\label{hot_day}
(\ref{desk_1}) \leq  P(\lambda_{(1)}> px)e^{o(n)}.
\end{eqnarray}
For \eqref{desk_2}, review \eqref{eq:2int}, a change of variable from $\lambda_n$ to $p(\beta-\delta_n)-\lambda_n$ gives that
\begin{eqnarray*}
 \eqref{desk_2}
  &=&
 (n-1)A_{n-1}  (p\delta_n)^{\beta(n-2)}\\
 &&
\int_{0}^{p(\delta'_n-\delta_n)}
(p(\beta-\delta_n)-\lambda_n)^{\frac{\beta(p-n+1)}{2}-1}
e^{\beta(n-2)\frac{\lambda_n}{p\delta_n}-\frac{p(\beta-\delta_n)-\lambda_n}{2}}
d\lambda_n\\
&\leq&
 (n-1)A_{n-1}  (p\delta_n)^{\beta(n-2)}[p(\beta-\delta_n)]^{\frac{\beta(p-n+1)}{2}-1}\\
 &&\times
\int_{0}^{p(\delta'_n-\delta_n)}
e^{-[\frac{\beta(p-n+1)}{2}-1]\frac{\lambda_n}{p(\beta-\delta_n)}+
\beta(n-2)\frac{\lambda_n}{p\delta_n}-\frac{p(\beta-\delta_n)-\lambda_n}{2}}
d\lambda_n\\
&\leq&
 (n-1)A_{n-1}  (p\delta_n)^{\beta(n-2)}[p(\beta-\delta_n)]^{\frac{\beta(p-n+1)}{2}-1}
 e^{-\frac{p(\beta-\delta_n)}{2}}\\
 &&
\int_{0}^{p(\delta'_n-\delta_n)}
e^{- (1+o(1))\frac{\delta_n\lambda_n}{2\beta}}
d\lambda_n,
\end{eqnarray*}
where the last step follows from the facts $-[\frac{\beta(p-n+1)}{2}-1]\frac{\lambda_n}{p(\beta-\delta_n)}+\frac{\lambda_n}{2}\sim -\frac{\delta_n\lambda_n}{2\beta}(1+o(1))$ and $\frac{n}{p\delta_n}\lambda_n=o({\delta_n}\lambda_n)$. Noticing that the last integral is equal to $(1+o(1))\frac{2\beta}{\delta_n} \leq \sqrt{\frac{p}{n}}$, we have from \eqref{jsm1} and \eqref{ballpen} (regarding ``$\beta-\delta_n$" here by ``$\beta_n$" in \eqref{jsm1}) that
\begin{eqnarray}\label{jsm2}
\log \mbox{ of }\eqref{desk_2} \leq -(1+o(1))\frac{p\delta_n^2}{{4} \beta} -\Theta(\log p).
\end{eqnarray}
For \eqref{desk_3}, similar to \eqref{coffee_sofa}, we have
\begin{align}
\eqref{desk_3}
\leq&~
 E[e^{(-1+\frac{x-\beta}{\delta_n})\beta\sum_{i=3}^n\frac{\lambda_i-p\beta}{p(x-\beta)}}]
 \notag\\
=&~\Big(1+\frac{2\beta}{p(x-\beta)}[1-\frac{x-\beta}{\delta_n}]\Big)^{-\beta (p-2) (n-2)/2}\times
 e^{-\frac{(n-2)\beta^2}{\delta_n}(1-\frac{
\delta_n}{x-\beta})}\notag\\
=&~ O(1).
\label{jsm3}
\end{align}
Combing (\ref{hot_day}), (\ref{jsm2}) and (\ref{jsm3}), we conclude that
\begin{eqnarray}
&&P(px<\lambda_{(1)}<p(x+\delta_n), \lambda_{(2)}<p(\beta+\delta'_n), p(\beta-\delta'_n)<\lambda_{(n)}< p(\beta-\delta_n)) \nonumber\\
&\lesssim& P(\lambda_{(1)}> px)\times e^{-(1+o(1))\frac{p\delta_n^2}{4 \beta} -\Theta(\log p)+o(n)} \nonumber\\
&=&o(1)P(\lambda_{(1)}> px), \nonumber
\end{eqnarray}
where the last step follows from the same argument as in \eqref{gx10}.
The conclusion holds.
\end{proof}

\begin{lemma}\label{lemma4}
Consider display \eqref{lowerbound} in the proof of Theorem \ref{theorem1}, where $\lambda_2>\cdots>\lambda_n$ has density $g_{n-1,p-1,\beta}(\lambda_2,\cdots,\lambda_n)$ and $\delta_n>0$ such that $\delta_n\to 0$ and $\delta_n^2/(np^{-1})\to\infty$. We have
\begin{align}
& \int_{\lambda_2>\cdots>\lambda_n,\atop  \lambda_{2}< p(\beta+\delta_n), \lambda_{n}> p(\beta-\delta_n)}e^{-\beta\sum_{i=2}^n\frac{\lambda_i-p\beta}{p(x-\beta)}
-\beta\alpha_1\sum_{i=2}^n\frac{(\lambda_i-p\beta)^2}{p^2(x-\beta)^2}}\notag\\
&\times
g_{n-1,p-1,\beta}(\lambda_2,\cdots,\lambda_n)d\lambda_2\cdots d\lambda_n\notag\\
\sim&~ \int_{\lambda_2>\cdots>\lambda_n}e^{-\beta\sum_{i=2}^n\frac{\lambda_i-p\beta}{p(x-\beta)}
-\beta\alpha_1\sum_{i=2}^n\frac{(\lambda_i-p\beta)^2}{p^2(x-\beta)^2}}
\notag\\
&\times
g_{n-1,p-1,\beta}(\lambda_2,\cdots,\lambda_n)d\lambda_2\cdots d\lambda_n.\label{july2}
\end{align}
\end{lemma}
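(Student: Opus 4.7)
I would prove Lemma \ref{lemma4} by showing that the complement integral is negligible compared to the unrestricted one. Write $h(\lambda) = \exp\{-\beta\sum_{i=2}^{n} L_i - \beta\alpha_1\sum_{i=2}^{n} L_i^2\}$ where $L_i = (\lambda_i - p\beta)/(p(x-\beta))$, let $d\mu = g_{n-1,p-1,\beta}(\lambda_2,\ldots,\lambda_n)\,d\lambda_2\cdots d\lambda_n$, and set $E = \{\lambda_{(2)} < p(\beta+\delta_n)\} \cap \{\lambda_{(n)} > p(\beta-\delta_n)\}$. Since $\int h\,d\mu = \int_E h\,d\mu + \int_{E^c} h\,d\mu$, the claim reduces to showing $\int_{E^c} h\,d\mu = o\bigl(\int h\,d\mu\bigr)$.

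The plan is to combine Cauchy--Schwarz with a Chebyshev tail bound on the extremal eigenvalues. By Cauchy--Schwarz, $\int_{E^c} h\,d\mu \leq \bigl(\int h^2\,d\mu\bigr)^{1/2}\mu(E^c)^{1/2}$. Applying Lemma \ref{lemma1'} to the $(n-1,p-1,\beta)$-ensemble with $\gamma = 2\beta$ (and the same $\alpha_1$) gives $\int h^2\,d\mu \sim e^{-2\alpha_1\beta^3/(x-\beta)^2\cdot[(n-1)^2/(p-1) - (n-1)^3/(3(p-1)^2)]}$, which under $p/n\to\infty$ matches $\bigl(\int h\,d\mu\bigr)^2$; hence $\bigl(\int h^2\,d\mu\bigr)^{1/2} \sim \int h\,d\mu$, so it suffices to prove $\mu(E^c)\to 0$. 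By a union bound, $\mu(E^c) \leq P^{n-1,p-1}(\lambda_{(1)} \geq p(\beta+\delta_n)) + P^{n-1,p-1}(\lambda_{(n-1)} \leq p(\beta-\delta_n))$, and I would bound each via Chebyshev: $P^{n-1,p-1}(\lambda_{(1)} \geq p(\beta+\delta_n)) \leq E[(\lambda_{(1)}^{n-1,p-1} - p\beta)^2]/(p\delta_n)^2$, and analogously for the smallest eigenvalue.

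The main obstacle is the sharp second-moment estimate $E[(\lambda_{(1)}^{n-1,p-1} - p\beta)^2] = O(np)$: with this in hand, Chebyshev immediately yields $\mu(E^c) = O(n/(p\delta_n^2)) \to 0$ under the hypothesis $\delta_n^2 p/n \to \infty$, completing the proof. The trivial bound $(\lambda_{(1)} - p\beta)^2 \leq \sum_i (\lambda_i - p\beta)^2$ only gives $O(n^2 p)$ and does not suffice; one needs to use the Marchenko--Pastur edge behavior, according to which $E[\lambda_{(1)}^{n-1,p-1}] - p\beta \approx 2\beta\sqrt{np}$ while $\mathrm{Var}(\lambda_{(1)}^{n-1,p-1})$ lives on the much smaller Tracy--Widom scale. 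Rigorously this can be derived from the bidiagonal representation $L_{n-1,p-1,\beta} = \BB_{n-1,p-1,\beta}\BB_{n-1,p-1,\beta}^\top$ used in Lemma \ref{lemma1'}: Gershgorin's circle theorem places every eigenvalue in discs centered near $\beta p$ with radius approximately $2\beta\sqrt{np}$, and the fluctuations of the chi entries on the diagonal and sub-diagonal can be controlled through direct moment computations using their independence. A symmetric argument handles $\lambda_{(n-1)}^{n-1,p-1}$.
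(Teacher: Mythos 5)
Your proposal takes a genuinely different route from the paper. The paper proves Lemma~\ref{lemma4} by the same ``peeling'' technique as Lemma~\ref{lemma0}: it splits off $\lambda_2$ (resp.\ $\lambda_n$) from the product, uses the linearization bound $\lambda_2-\lambda_i\le p\delta_n\,e^{(\lambda_2-p(\beta+\delta_n))/p\delta_n-(\lambda_i-p\beta)/p\delta_n}$ to factorize, and shows the resulting one--dimensional $\lambda_2$--integral decays like $e^{-(1+o(1))p\delta_n^2/(4\beta)-\Theta(\log p)}$ while the remaining integral is $O(1)$; this is then compared against the unrestricted integral which Lemma~\ref{lemma1'} shows is $e^{O(n^2/p)}$. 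Your Cauchy--Schwarz route is cleaner in structure, and your key observation that $\int h^2\,d\mu\sim\bigl(\int h\,d\mu\bigr)^2$ by another invocation of Lemma~\ref{lemma1'} (with $\gamma=2\beta$) is correct and nicely isolates what is really happening: $h$ concentrates, so Cauchy--Schwarz is asymptotically tight. Both parametrizations of Lemma~\ref{lemma1'} (yours with $\gamma=\beta$ and $\gamma=2\beta$, both with $\alpha=\alpha_1$) are legitimate, and the reduction to $\mu(E^c)\to0$ is sound.

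There is, however, a real gap in how you propose to establish $\mu(E^c)\to0$. You reduce it to the second--moment estimate $E\bigl[(\lambda_{(1)}^{n-1,p-1}-p\beta)^2\bigr]=O(np)$ and suggest deriving it via Gershgorin plus ``direct moment computations.'' That bound is true, but it does not follow from Gershgorin plus a naive union bound on moments: writing $\lambda_{(1)}\le\max_i(\LL_{ii}+R_i)$ and using $E[\max_i Y_i^2]\le\sum_i E[Y_i^2]$ loses a factor of $n$, giving only $O(n^2p)$ --- exactly the same loss as the trivial bound $(\lambda_{(1)}-p\beta)^2\le\sum_i(\lambda_i-p\beta)^2$ that you correctly reject. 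To get $O(np)$ for the second moment one needs a sub--exponential tail bound or an optimized Chernoff/exponential--moment argument for the maximum, which is substantially more than ``direct moment computations.'' Also note that the paper's Lemma~\ref{lemma1} does give a tail bound on $\lambda_{(1)}$, but only under the \emph{stronger} hypothesis $\delta_n^2/(np^{-1}\log(p/n))\to\infty$; under Lemma~\ref{lemma4}'s weaker condition $\delta_n^2/(np^{-1})\to\infty$ that lemma does not apply, so you cannot simply cite it.

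The good news is that the gap is fixable without the sharp second--moment bound: bypass $E[(\lambda_{(1)}-p\beta)^2]$ entirely and apply Gershgorin, then a union bound on \emph{probabilities}, then Chebyshev \emph{per disc}. Each row sum $\LL_{ii}+R_i$ has mean $\le\beta p+2\beta\sqrt{np}+O(n)$ and variance $O(p)$ (a direct computation from $d_i,s_i$ independence), so under $\delta_n^2 p/n\to\infty$ one gets $P(\LL_{ii}+R_i>p(\beta+\delta_n))=O\bigl(1/(p\delta_n^2)\bigr)$ for each $i$, hence $\mu(\lambda_{(1)}>p(\beta+\delta_n))=O(n/(p\delta_n^2))\to0$; the symmetric argument handles $\lambda_{(n-1)}$. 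With that fix your proof is complete and arguably more transparent than the paper's peeling argument.
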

\begin{proof}[Proof]
The proof is similar to that of Lemma \ref{lemma0}. We first show that
\begin{equation}\label{gx7}
 \int_{\lambda_2>\cdots>\lambda_n,\atop  \lambda_{2}>p(\beta+\delta_n)}e^{-\beta\sum_{i=2}^n\frac{\lambda_i-p\beta}{p(x-\beta)}
-\beta\alpha_1\sum_{i=2}^n\frac{(\lambda_i-p\beta)^2}{p^2(x-\beta)^2}} \times
g_{n-1,p-1,\beta}(\lambda_2,\cdots,\lambda_n)d\lambda_2\cdots d\lambda_n
\end{equation}
is negligible compared with the integral in \eqref{july2}. By the same argument as in the derivation of \eqref{july1},  \eqref{int2} and \eqref{int3}, we have
\begin{align}
\eqref{gx7}
=&~
 \int_{\lambda_2>\cdots>\lambda_n\atop
\lambda_{2}>p(\beta+\delta_n)}
(n-1)A_{n-1} \prod_{i=3}^n (\lambda_2-\lambda_i)^\beta
\lambda_2^{\frac{\beta(p-n+1)}{2}-1}
e^{-\frac{\lambda_2}{2}}
\notag\\
&\quad \times
e^{-\beta\sum_{i=3}^n\frac{\lambda_i-p\beta}{p(x-\beta)}
-\beta\alpha_1\sum_{i=3}^n\left(\frac{\lambda_i-p\beta}{px-p\beta}\right)^2} g_{n-2,p-2,\beta}(\lambda_3,\cdots,\lambda_n)d\lambda_2\cdots d\lambda_n
\notag\\
\leq&~
 \int_{\lambda_2>\cdots>\lambda_n\atop
\lambda_{2}>p(\beta+\delta_n)}
(n-1)A_{n-1}
(p\delta_n)^{\beta(n-2)}
\lambda_2^{\frac{\beta(p-n+1)}{2}-1}
e^{\beta(n-2)\frac{\lambda_2-p(\beta+\delta_n)}{p\delta_n}-\frac{\lambda_2}{2}}
\notag\\
&
\quad \times
e^{-\beta\sum_{i=3}^n\frac{\lambda_i-p\beta}{p\delta_n}-\beta\sum_{i=3}^n\frac{\lambda_i-p\beta}{p(x-\beta)}
-\beta\alpha_1\sum_{i=3}^n\left(\frac{\lambda_i-p\beta}{px-p\beta}\right)^2}
\notag\\
&\times
g_{n-2,p-2,\beta}(\lambda_3,\cdots,\lambda_n)d\lambda_2\cdots d\lambda_n
\notag\\
\leq&~
(n-1)A_{n-1}
(p\delta_n)^{\beta(n-2)}
 \int_{\lambda_{2}>p(\beta+\delta_n)} \lambda_2^{\frac{\beta(p-n+1)}{2}-1}
e^{\beta(n-2)\frac{\lambda_2-p(\beta+\delta_n)}{p\delta_n}-\frac{\lambda_2}{2}} d\lambda_2
\notag\\
&
\times\int_{\lambda_3>\cdots>\lambda_n}
e^{-(1+\frac{x-\beta}{\delta_n})\beta\sum_{i=3}^n\frac{\lambda_i-p\beta}{p(x-\beta)}
-\beta\alpha_1\sum_{i=3}^n\left(\frac{\lambda_i-p\beta}{px-p\beta}\right)^2}
 \notag\\
&\times
g_{n-2,p-2,\beta}(\lambda_3,\cdots,\lambda_n)d\lambda_3\cdots d\lambda_n,\notag\\&
\label{gxgx}
\end{align}
where in the second step we used the upper bound
\begin{equation}
(\lambda_2-\lambda_i)= (p\delta_n)
\left(1+\frac{\lambda_2-p(\beta+\delta_n)}{p\delta_n}-\frac{\lambda_i-p\beta}{p\delta_n}\right)
\leq  (p\delta_n) e^{\frac{\lambda_2-p(\beta+\delta_n)}{p\delta_n}-\frac{\lambda_i-p\beta}{p\delta_n}}.
\label{xgj4}
\end{equation}
Note that the above two integrals from the final line of display \eqref{gxgx} take similar forms to \eqref{int2} and \eqref{int3} in {\it Step 2} of the proof of Lemma \ref{lemma0}. Then from \eqref{ballpen} and \eqref{coffee_sofa}, we have
$$\eqref{gx7}\lesssim e^{-(1+o(1))\frac{p\delta_n^2}{2 \beta} -\Theta(\log p)+O(1)}.$$
Using Lemma \ref{lemma1'}, we know the main integral
$\eqref{july2}\sim e^{O(n^2p^{-1})}$. Since under the assumption of $\delta_n$, $-(1+o(1))\frac{p\delta_n^2}{2 \beta} -\Theta(\log p)+O(n^2p^{-1})\to -\infty$, we know \eqref{gx7} is negligible compared with \eqref{july2}.
Furthermore, by the same argument as the derivation of \eqref{desk_2} and \eqref{desk_3}, we see that
\begin{align}\label{july3}
&
\int_{\lambda_2>\cdots>\lambda_n,\atop  \lambda_{n}<p(\beta-\delta_n)}e^{-\beta\sum_{i=2}^n\frac{\lambda_i-p\beta}{p(x-\beta)}
-\beta\alpha_1\sum_{i=2}^n\frac{(\lambda_i-p\beta)^2}{p^2(x-\beta)^2}}\notag\\
& \times
g_{n-1,p-1,\beta}(\lambda_2,\cdots,\lambda_n)d\lambda_2\cdots d\lambda_n
\notag\\
\lesssim&~
(n-1)A_{n-1}  (p\delta_n)^{\beta(n-2)}
\int_{\lambda_n<p(\beta-\delta_n)}
\lambda_n^{\frac{\beta(p-n+1)}{2}-1}
e^{-\beta(n-2)\frac{\lambda_n-p(\beta-\delta_n)}{p\delta_n}-\frac{\lambda_n}{2}}
d\lambda_n
\notag\\
&
\times
\int_{\lambda_2>\cdots>\lambda_n}
e^{(-1+\frac{x-\beta}{\delta_n})\beta\sum_{i=2}^{n-1}\frac{\lambda_i-p\beta}{p(x-\beta)}
-\beta\alpha_1\sum_{i=2}^{n-1}\left(\frac{\lambda_i-p\beta}{px-p\beta}\right)^2} \notag\\
&\times
g_{n-2,p-2,\beta}(\lambda_2,\cdots,\lambda_{n-1})d\lambda_2\cdots d\lambda_{n-1}
\nonumber\\
\lesssim&~
e^{-(1+o(1))\frac{p\delta_n^2}{4 \beta} -\Theta(\log p)+O(1)},\notag\\&
\end{align}
where the last step follows from the approximation results \eqref{jsm2} and \eqref{jsm3}.
This implies that \eqref{july3} is also negligible with respect to \eqref{july2}.
Combining \eqref{gx7} and \eqref{july3}, we have the desired conclusion.
\end{proof}

\begin{lemma}\label{lemma3}
Assume $p/n\to\infty.$ Then, for any $\delta_n>0$ with $\delta_n\to 0$ and $\frac{\delta_n^2}{np^{-1}}\to\infty$ as $n\to\infty$, we have
\begin{eqnarray}
E^Q\left[L_p^2;
 \lambda_{(1)}>px,\lambda_{(2)}>p(\beta+\delta_n)\right]
= o(1) P(\lambda_{(1)}>px)^2;\label{muskmellon}\\
E^Q\left[L_p^2;  \lambda_{(1)}>px,\lambda_{(n)}<p(\beta-\delta_n)\right]= o(1) P(\lambda_{(1)}>px)^2.\label{watermellon}
\end{eqnarray}
\end{lemma}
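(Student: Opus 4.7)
The plan is to mirror the three-step peeling scheme used in Lemma \ref{lemma0}, replacing the probability integral of $dP/dQ$ with the second-moment integral of $(dP/dQ)^2\cdot dQ$. Starting from the factorization \eqref{boston1} together with the $Q$-density \eqref{july4} and cancelling one copy of $g_{n-1,p-1,\beta}$, I would first rewrite, for any measurable $\Omega\subset\{\lambda_1>\lambda_2>\cdots>\lambda_n\}$,
\[
E^Q[L_p^2;\Omega]=\tfrac{2x}{x-\beta}\,n^2A_n^2\int_\Omega\prod_{i=2}^n(\lambda_1-\lambda_i)^{2\beta}\,\lambda_1^{\beta(p-n+1)-2}\,e^{-\lambda_1+\frac{x-\beta}{2x}(\lambda_1-px\vee\lambda_2)}\,g_{n-1,p-1,\beta}(\lambda_2,\ldots,\lambda_n)\,d\lambda.
\]
This places the second moment into a form structurally parallel to the probability integral handled in Lemma \ref{lemma0}, but with the $\lambda_1$-exponents doubled and an extra IS-tilt in the exponential.

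For \eqref{muskmellon} I would first dispose of the strong regime $\delta_n^2/(np^{-1}\log(n^{-1}p))\to\infty$ as a warm-up: bound the $\lambda_1$-integral exactly as in the proof of Theorem \ref{theorem2} (displays \eqref{eq:sq}--\eqref{eq:sq1}) to extract the prefactor $(px)^{\beta(p-n+1)-2}e^{-(1-\frac{x-\beta}{2x})px}$, use $(\lambda_1-\lambda_i)^{2\beta}\leq \lambda_1^{2\beta}$ to decouple the remaining variables, and reduce the $\lambda_2,\ldots,\lambda_n$ integral to $P(\lambda_{n-1,(1)}>p(\beta+\delta_n))\lesssim e^{-p\delta_n^2/(4\beta)}$ by Lemma \ref{lemma1}. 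Comparing with the exact asymptotic for $P(\lambda_{(1)}>px)^2$ from Theorem \ref{theorem1} gives the ratio $o(1)$.

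For the weaker regime $\delta_n^2 p/n\to\infty$ I would set $\delta_n'=\max\{2\delta_n,\sqrt{np^{-1}}\log(n^{-1}p)\}$ and decompose the event by removing $\{\lambda_{(1)}>p(x+\delta_n')\}$, $\{\lambda_{(2)}>p(\beta+\delta_n')\}$, and $\{\lambda_{(n)}<p(\beta-\delta_n')\}$ (each handled by the strong-regime case together with the bounds from Lemmas \ref{lemma20} and \ref{lemma0}). On the residual moderate piece $\{px<\lambda_1<p(x+\delta_n'),\,p(\beta+\delta_n)<\lambda_2<p(\beta+\delta_n'),\,\lambda_n>p(\beta-\delta_n')\}$ I would apply \eqref{gx6} with $2\beta$ in place of $\beta$ to bound $\prod_{i=2}^n(\lambda_1-\lambda_i)^{2\beta}$, then invoke \eqref{boston1} a second time to peel off $\lambda_2$ from $g_{n-1,p-1,\beta}$. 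The integrand splits into three pieces directly analogous to \eqref{int1}--\eqref{int3}, estimated respectively by \eqref{flower_rose}, the chain \eqref{jsm1}--\eqref{ballpen}, and the chi-square MGF bound \eqref{coffee_sofa}; these combine to yield a multiplicative factor $e^{-(1+o(1))p\delta_n^2/(4\beta)-\Theta(\log p)+o(n)}$ relative to $P(\lambda_{(1)}>px)^2$, which is $o(1)$ under the given hypothesis on $\delta_n$.

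Assertion \eqref{watermellon} is established by the symmetric peeling that drops $\lambda_n$ out of $g_{n-1,p-1,\beta}$ instead of $\lambda_2$, using the bound $(\lambda_i-\lambda_n)\leq (p\delta_n)\exp\{(\lambda_i-p\beta)/(p\delta_n)-(\lambda_n-p(\beta-\delta_n))/(p\delta_n)\}$ in place of \eqref{xgj4}, and then following \eqref{desk_2}--\eqref{jsm3} with the density exponents doubled. The principal obstacle is bookkeeping: doubling the exponents in the joint density and adding the $\frac{x-\beta}{2x}(\lambda_1-px)$ tilt from the IS weight threaten to upset the delicate cancellation between the Gaussian-type decay $p\delta_n^2/(4\beta)$ and the $\log A_n$, $\log A_{n-1}$ contributions expanded in \eqref{logA}--\eqref{logA2}. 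Carrying these through with doubled constants — and verifying in particular that $p\delta_n^2/(4\beta)$ still dominates the $\beta n\log(\sqrt{p/n}\,\delta_n)$ term together with the $O(\log p)$ residual — is where the argument requires the most care.
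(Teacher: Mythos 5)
Your unified second-moment formula is correct, but the reduction to $P(\lambda_{n-1,(1)}>p(\beta+\delta_n))$ is valid only on the sub-event $\{\lambda_{(2)}<px\}$, where $px\vee\lambda_{(2)}=px$ and the importance-sampling tilt $e^{-\frac{x-\beta}{2x}(\lambda_1-px\vee\lambda_2)}$ in the denominator is a function of $\lambda_1$ alone, so the $\lambda_1$-integral decouples. On $\{\lambda_{(2)}>px\}$ (a genuine and troublesome piece of the event $\{\lambda_{(2)}>p(\beta+\delta_n)\}$), the tilt carries $\lambda_2$, and after integrating $\lambda_1$ from $\lambda_2$ to $\infty$ with the crude bound $(\lambda_1-\lambda_i)^{2\beta}\le\lambda_1^{2\beta}$ one is left with an extra weight of order $\lambda_2^{\beta(p+n-1)-2}e^{-\lambda_2}$ multiplying the $g_{n-1,p-1,\beta}$ density; this is emphatically not bounded by $P(\lambda_{n-1,(1)}>p(\beta+\delta_n))$. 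The paper isolates $\{\lambda_{(2)}>px\}$ as a separate Step 1 of the proof, peels $\lambda_2$ a second time to pick up another factor of $\lambda_2^{\beta(n+p-3)/2-1}e^{-\lambda_2/2}$ and an $A_{n-1}$, and then concludes by a three-fold--versus--two-fold exponential-rate comparison (with the negative rate $\frac{\beta}{2}-\frac{\beta}{2}\log\beta-\frac{x}{2}+\frac{\beta}{2}\log x$). Your proposal never mentions this sub-event or anything replacing Step 1, so as written it does not prove \eqref{muskmellon}. The same omission recurs in your treatment of \eqref{watermellon}, since the paper relies on its Step 1 again to restrict to $\lambda_{(2)}<px$ before peeling $\lambda_{(n)}$.

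There is also an unforced structural complication in your weak-regime argument. You invoke Lemmas \ref{lemma20} and \ref{lemma0} to remove the outlying events $\{\lambda_{(1)}>p(x+\delta_n')\}$, $\{\lambda_{(2)}>p(\beta+\delta_n')\}$, and $\{\lambda_{(n)}<p(\beta-\delta_n')\}$, but those lemmas control $P$-probabilities, not the weighted integral $E^Q[L_p^2;\cdot]$ whose integrand carries $(dP/dQ)^2$; they do not directly bound the second moment on those events. Removing $\{\lambda_{(n)}<p(\beta-\delta_n')\}$, for instance, is essentially \eqref{watermellon} with $\delta_n'$ in place of $\delta_n$, so the decomposition threatens circularity unless it is organized carefully. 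The paper sidesteps all of this: in its Step 2 (on $\lambda_{(2)}<px$) it keeps the sharp $(px-p\beta)^{2\beta(n-1)}$ prefactor rather than the crude $\lambda_1^{2\beta(n-1)}$, peels $\lambda_2$ out of $g_{n-1,p-1,\beta}$ directly with the bound $\lambda_2-\lambda_i\le p\delta_n\exp\{(\lambda_2-p(\beta+\delta_n))/(p\delta_n)-(\lambda_i-p\beta)/(p\delta_n)\}$, and uses the chi-square MGF only for the $\lambda_3,\ldots,\lambda_n$ piece, producing the factor $e^{-(1+o(1))p\delta_n^2/(4\beta)-\Theta(\log p)}$ and concluding under the hypothesis $\delta_n^2 p/n\to\infty$ with no $\delta_n'$ truncation. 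You should replace your strong-regime/weak-regime scheme by these two steps.
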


\begin{proof}[Proof] Recall the notation $L_p$ and $Q$ as Section \ref{subsubsec:Sim} (Efficient simulation method). The density of $\lambda_{(1)},\cdots,\lambda_{(n)}$ under measure $Q$ is given in \eqref{aa1}.
We prove (\ref{muskmellon}) and (\ref{watermellon}) separately.

\smallskip
{\it Proof of (\ref{muskmellon})}. We proceed  by two steps.

\smallskip
{\it Step 1}. We first  consider the case when $\lambda_{2}>px$ and show that
\begin{eqnarray*}
E^Q\left[L_p^2;
 \lambda_{(1)}>px,\lambda_{(2)}>px\right]
= o(1) P(\lambda_{(1)}>px)^2.
\end{eqnarray*}
From \eqref{Lp}, the above expectation term equals
\begin{align}
&
E^Q\biggr[\biggr(\frac{nA_n\times
\prod_{i=2}^n(\lambda_1-\lambda_i)^{\beta}
\cdot \lambda_1^{\frac{\beta(p-n+1)}{2}-1}\cdot
e^{-\frac{1}{2} \lambda_1}}
{\frac{x-\beta}{2x} e^{-\frac{x-\beta}{2x} (\lambda_1-px\vee \lambda_2)}\cdot I_{(\lambda_1>px\vee \lambda_2)}}
\biggr)^2;\biggr.
\lambda_2>px, \lambda_1>\cdots>\lambda_n\biggr]
\notag\\
&\lesssim~
 \Theta(1)n^2A_n^2 E^Q\biggr[\frac{
 \lambda_1^{\beta(p+n-1)-2}\cdot
e^{- \lambda_1}}
{e^{-(x-\beta) (\lambda_1-\lambda_2)/x}};\lambda_2>px, \lambda_1>\cdots>\lambda_n\biggr]
 \notag\\
 &\lesssim~
 \Theta(1)n^2A_n^2
 E^Q\biggr[E^Q\biggr\{\frac{\lambda_1^{\beta(p+n-1)-2}\cdot e^{- \lambda_1}}
{e^{-(x-\beta) (\lambda_1-\lambda_2)/x}};
\lambda_2>px, \lambda_1>\cdots>\lambda_n\biggr| \lambda_2,\cdots,\lambda_n\biggr\}\biggr],
\notag\\
&\label{july5}
 \end{align}
where in the first step we used
  $\prod_{i=2}^n(\lambda_1-\lambda_i)^{\beta} \lambda_1^{\frac{\beta(p-n+1)}{2}-1}\leq  \lambda_1^{\frac{\beta(p+n-1)}{2}-1}.$
Note that under the change of measure $Q$, the order statistics $\lambda_{(2)},\cdots,\lambda_{(n)}$ has the same distribution as the original measure $P$, and $\lambda_{(1)}$ under $Q$ follows exponential distribution in \eqref{july4}. Therefore, we know that the inner level expectation equals
\begin{eqnarray}
&& E^Q\left\{\frac{
 \lambda_1^{\beta(p+n-1)-2}\cdot
e^{- \lambda_1}}
{e^{-(x-\beta) (\lambda_1-\lambda_2)/x}};
\lambda_2>px, \lambda_1>\cdots>\lambda_n\biggr| \lambda_2,\cdots,\lambda_n\right\}
 \notag\\
 &=& I_{(\lambda_2>px, \lambda_2>\cdots>\lambda_n)}\int_{\lambda_2}^\infty \frac{y^{\beta(p+n-1)-2}\cdot
e^{- y}}{e^{-(x-\beta) (y-\lambda_2)/x}}\times \frac{x-\beta}{2x} e^{-\frac{x-\beta}{2x} (y-\lambda_2)}dy
\notag\\
&=&
I_{(\lambda_2>px, \lambda_2>\cdots>\lambda_n)}
\frac{(x-\beta)}{2x} e^{-\frac{(x-\beta)\lambda_2}{2x}}
\int_{\lambda_2}^\infty y^{\beta(p+n-1)-2} e^{-\frac{x+\beta}{2x}y}dy
\notag\\
&\leq &
I_{(\lambda_2>px, \lambda_2>\cdots>\lambda_n)}
(1+\Theta({n}{p}^{-1})) \lambda_2^{\beta(p+n-1)-2} e^{-\lambda_2},\notag\\
\label{xgj2}
 \end{eqnarray}
 where in the last step we used the following argument:
  for $\lambda_2>px$, we have
\begin{align}\label{xgj1}
\int_{\lambda_2}^\infty y^{\beta(p+n-1)-2} e^{-\frac{x+\beta}{2x}y}dy
&=~
 \int_{0}^\infty (y+\lambda_2)^{\beta(p+n-1)-2} e^{-\frac{x+\beta}{2x}(y+\lambda_2)}dy\notag\\
 &\leq~
 \int_{0}^\infty \lambda_2^{\beta(p+n-1)-2} e^{[\beta(p+n-1)-2]\frac{y}{\lambda_2}-\frac{x+\beta}{2x}(y+\lambda_2)}dy.
 \end{align}
 Since $[\beta(p+n-1)-2]\frac{y}{\lambda_2}-\frac{x+\beta}{2x}y\leq
 [\beta(p+n-1)-2]\frac{y}{px}-\frac{x+\beta}{2x}y
\leq [-\frac{x-\beta}{2x}+\frac{\beta n}{xp}]y,$ we have
\begin{equation}
\eqref{xgj1}
~\leq~
 \big(\frac{x-\beta}{2x}-\frac{\beta n}{xp}\big)^{-1}\lambda_2^{\beta(p+n-1)-2} e^{-\frac{x+\beta}{2x}\lambda_2},\label{xgj3}
 \end{equation}
which implies \eqref{xgj2}.
The result in \eqref{xgj2} implies that
\begin{eqnarray*}
\eqref{july5}
&\lesssim &
 \Theta(1) n^2A_n^2
E\left[
 \lambda_2^{\beta(p+n-1)-2} e^{-\lambda_2}; \lambda_2>px,\lambda_2>\cdots>\lambda_n\right],
 \end{eqnarray*}
 where $E[\cdot]$ is the expectation with respect to distribution $g_{n-1,p-1,\beta}(\lambda_2,\cdots,\lambda_n)$.
The corresponding density function of order statistics $\lambda_{(2)},\cdots,\lambda_{(n)}$ is\\ $g_{n-1,p-1,\beta}(\lambda_2,\cdots,\lambda_n)$, which is bounded above by
$$(n-1)A_{n-1}\lambda_2^{\beta (n+p-3)/2-1}e^{-\frac{\lambda_2}{2}}g_{n-2,p-2,\beta}
(\lambda_3,\cdots,\lambda_n)$$
 following \eqref{boston1} and the fact that $\lambda_{2}-\lambda_{i}<\lambda_{2}$ for $i=3,\cdots,n$.
This implies that
\begin{eqnarray*}
&&
E^Q\left[L_p^2; \lambda_{(1)}>px,\lambda_{(2)}>px\right]\\
 &\lesssim&
 \Theta(1)n^2A_n^2
(n-1)A_{n-1}\int_{px}^\infty
 \lambda_2^{\beta(p+n-1)-2} e^{-\lambda_2}
\times \lambda_2^{\beta (n+p-3)/2-1}e^{-\frac{\lambda_2}{2}}d\lambda_2\\
& & \ \ \ \ \ \ \ \ \ \ \ \ \ \ \ \ \ \ \ \ \ \ \ \ \ \ \ {\times\int_{\lambda_3>\cdots>\lambda_n}g_{n-2,p-2,\beta}(\lambda_3,\cdots,\lambda_n)\,d\lambda_3\cdots d\lambda_n}\\
 &\lesssim&
 \Theta(1)n^2A_n^2
(n-1)A_{n-1}
 (px)^{\beta(3p+3n-5)/2-3} e^{-\frac{3px}{2}}\\
  &\lesssim&
  \Theta(1)e^{-3\frac{\beta p}{2}\log p -3\frac{\beta p}{2}\left(\log {\beta}-1\right)
-3\frac{\beta n}{2}\log n
+O(n+\log p)}e^{[\beta(3p+3n-5)/2-3]\log(px)-\frac{3px}{2}}\\
&=&
 \Theta(1)e^{ 3p\left(\frac{\beta}{2}-\frac{\beta}{2}\log\beta-\frac{x}{2}+\frac{\beta}{2}\log x\right)+3\frac{\beta n}{2}\log \frac{p}{n}
+O(n+\log p)}\\
&=& o(1)P(\lambda_{(1)}>px)^2,
\end{eqnarray*}
where we used the fact that the second integral is equal to $1$ and  a similar argument to \eqref{xgj1} and \eqref{xgj3} is applied in the second step; the third step follows from {\eqref{logA}} and \eqref{logA2}, and the last step follows from the approximation result in Theorem 1 by noting that $\frac{\beta}{2}-\frac{\beta}{2}\log\beta-\frac{x}{2}+\frac{\beta}{2}\log x<0$.

\smallskip
{\it Step 2}.
Based on the result in {\it Step 1}, for the first equation, we only need to focus on the case when $\lambda_2<px$. Note that $\lambda_{(1)}$ under $Q$ follows exponential distribution in \eqref{july4}. We have from \eqref{Lp}
\begin{eqnarray}
&&E^Q\left[L_p^2;
 \lambda_{(1)}>px>\lambda_{(2)}>p(\beta+\delta_n)\right]\notag\\
&=& E^Q\left[\biggr(\frac{nA_n\times
\prod_{i=2}^n(\lambda_1-\lambda_i)^{\beta}
\cdot \lambda_1^{\frac{\beta(p-n+1)}{2}-1}\cdot
e^{-\frac{1}{2} \lambda_1}}
{\frac{x-\beta}{2x} e^{-\frac{x-\beta}{2x} (\lambda_1-px)}\cdot I_{(\lambda_1>px)}}
\biggr)^2;\right. \notag\\
&& \quad\quad\quad\quad \lambda_1>px>\lambda_2>p(\beta+\delta_n), \lambda_1>\cdots>\lambda_n\biggr]\notag\\
&=&
\Theta(1)  n^2A_n^2 \int_{ \lambda_2>\cdots>\lambda_n\atop px>\lambda_2>p(\beta+\delta_n)}
\int_{px}^\infty \frac{\prod_{i=2}^n(\lambda_1-\lambda_i)^{2\beta}
\lambda_1^{\beta(p-n+1)-2}\cdot
e^{- \lambda_1}}{e^{-(x-\beta) (\lambda_1-px)/x}} \notag\\
&&\times \frac{x-\beta}{2x} e^{-\frac{x-\beta}{2x} (\lambda_1-px)}d\lambda_1\times g_{n-1,p-1,\beta}(\lambda_2,\cdots,\lambda_n)d\lambda_2\cdots d\lambda_n\notag\\
&\lesssim&
 \Theta(1)n^2A_n^2 (px-p\beta)^{2\beta(n-1)}
e^{-\frac{x-\beta}{2x}px}
\notag\\
&& \times \int_{px}^\infty
\lambda_1^{\beta(p-n+1)-2}
e^{2\beta(n-1)\frac{\lambda_1-px}{p(x-\beta)}-\frac{x+\beta}{2x} \lambda_1}
d\lambda_1\notag\\
&&\times \int_{ \lambda_2>\cdots>\lambda_n\atop px>\lambda_2>p(\beta+\delta_n)}
e^{-2\beta\sum_{i=2}^n\frac{\lambda_i-p\beta}{p(x-\beta)}}
g_{n-1,p-1,\beta}(\lambda_2,\cdots,\lambda_n)d\lambda_2\cdots d\lambda_n,
\notag\\
\label{uppersq}
\end{eqnarray}
where in the last step we used upper bound $$(\lambda_1-\lambda_i)= (px-p\beta)
\left(1+\frac{\lambda_1-px}{p(x-\beta)}-\frac{\lambda_i-p\beta}{p(x-\beta)}\right)
\leq  (px-p\beta) e^{\frac{\lambda_1-px}{p(x-\beta)}-\frac{\lambda_i-p\beta}{p(x-\beta)}}.$$
A direct calculation as in \eqref{xgj1} and \eqref{xgj3} for the first integral in \eqref{uppersq}
 gives that
\begin{eqnarray*}
\eqref{uppersq}
&\sim&
\Theta(1)  n^2A_n^2(px-p\beta)^{2\beta(n-1)}(px)^{\beta(p-n+1)-2}e^{- px}\\
&&\times
 \int_{ \lambda_2>\cdots>\lambda_n\atop px>\lambda_2>p(\beta+\delta_n)}
e^{-2\beta\sum_{i=2}^n\frac{\lambda_i-p\beta}{p(x-\beta)}}
g_{n-1,p-1,\beta}(\lambda_3,\cdots,\lambda_n)d\lambda_2\cdots d\lambda_n\\
&\lesssim&
  \Theta(1)n^2A_n^2(px-p\beta)^{2\beta(n-1)}(px)^{\beta(p-n+1)-2}e^{- px}\\
&&\times \int_{\lambda_2>\lambda_3>\cdots>\lambda_n\atop px>\lambda_2>p(\beta+\delta_n)}
(n-1)A_{n-1} \prod_{i=3}^n (\lambda_2-\lambda_i)^\beta
\lambda_2^{\frac{\beta(p-n+1)}{2}-1}
e^{-\frac{\lambda_2}{2}-2\beta\frac{\lambda_2-p\beta}{p(x-\beta)}}\\
&&\quad \times
e^{-2\beta\sum_{i=3}^n\frac{\lambda_i-p\beta}{p(x-\beta)}}
g_{n-2,p-2,\beta}(\lambda_3,\cdots,\lambda_n)d\lambda_2\cdots d\lambda_n.
\end{eqnarray*}
Using again the upper bound \eqref{xgj4}: $(\lambda_2-\lambda_i)
\leq  p\delta_n e^{\frac{\lambda_2-p(\beta+\delta_n)}{p\delta_n}-\frac{\lambda_i-p\beta}{p\delta_n}},$
the integral term in the above display is bounded by
\begin{align}
&(n-1)A_{n-1} (p\delta_n)^{\beta(n-2)}\int^{px}_{p(\beta+\delta_n)}
\lambda_2^{\frac{\beta(p-n+1)}{2}-1}
e^{\beta(n-2)\frac{\lambda_2-p(\beta+\delta_n)}{p\delta_n}-\frac{\lambda_2}{2}-2\beta\frac{\lambda_2-p\beta}{p(x-\beta)}}d\lambda_2
\notag\\
&\times\int_{\lambda_3>\cdots>\lambda_n}
e^{-\beta\sum_{i=3}^n\frac{\lambda_i-p\beta}{p\delta_n}-2\beta\sum_{i=3}^n\frac{\lambda_i-p\beta}{p(x-\beta)}}
g_{n-2,p-2,\beta}(\lambda_3,\cdots,\lambda_n)d\lambda_3\cdots d\lambda_n
\notag\\
\leq&~
(n-1)A_{n-1} (p\delta_n)^{\beta(n-2)}e^{-\frac{p(\beta+\delta_n)}{2}}\notag\\
&~\times
\int_{0}^{p(x-\beta)}
(\lambda_2+p(\beta+\delta_n))^{\frac{\beta(p-n+1)}{2}-1}
e^{\beta(n-2)\frac{\lambda_2}{p\delta_n}-\frac{\lambda_2}{2}}d\lambda_2
\notag\\
&\times\int_{\lambda_3>\cdots>\lambda_n}
e^{-\beta\sum_{i=3}^n\frac{\lambda_i-p\beta}{p\delta_n}-2\beta\sum_{i=3}^n\frac{\lambda_i-p\beta}{p(x-\beta)}}
g_{n-2,p-2,\beta}(\lambda_3,\cdots,\lambda_n)d\lambda_3\cdots d\lambda_n
\notag\\
\sim&~e^{-(1+o(1))\frac{p\delta_n^2}{4\beta}-\Theta(\log p)},\notag\\
&
\label{xgj5}
\end{align}
where in the first step, we used $e^{-2\beta\frac{\lambda_2-p\beta}{p(x-\beta)}}\leq 1$ since $\lambda_2>p\beta$ and the change of variable for the first integral from $\lambda_2$ to $\lambda_2+p(\beta+\delta_n)$, and the last step follows from a similar argument as the approximations of \eqref{int2} and \eqref{int3} in the proof of Lemma \ref{lemma0}. This implies that
\begin{eqnarray*}
\eqref{uppersq}
&\lesssim&
   \Theta(1)n^2A_n^2(px-p\beta)^{2\beta(n-1)}(px)^{\beta(p-n+1)-2}e^{- px}
\times e^{-(1+o(1))\frac{p\delta_n^2}{4\beta}-\Theta(\log p)}.
\end{eqnarray*}
Together with the tail probability expression in Theorem \ref{theorem1}, this and the fact $p\delta_n^2/(p^{-1}n^2)\to\infty$ conclude  that
$
\eqref{uppersq}=o(1) P(\lambda_{(1)}>px)^2.
$
We then get (\ref{muskmellon}).

\smallskip
{\it Proof of (\ref{watermellon})}. From the result in {\it Step 1}, we only need to focus on the case when $px>\lambda_2$, and we have a similar  upper bound as in \eqref{uppersq}:
\begin{eqnarray}\label{uppern}
&&E^Q\left[L_p^2;
 \lambda_{(1)}>px>\lambda_{(2)}, \lambda_{(n)}<p(\beta-\delta_n)\right]\notag\\
&\lesssim&
  \Theta(1)n^2A_n^2(px-p\beta)^{2\beta(n-1)}e^{-\frac{x-\beta}{2x}px}
\notag\\
&&\times \int_{px}^\infty
\lambda_1^{\beta(p-n+1)-2}
e^{2\beta(n-1)\frac{\lambda_1-px}{p(x-\beta)}-\frac{x+\beta}{2x} \lambda_1}
d\lambda_1\notag\\
&&\times
\int_{ \lambda_2>\cdots>\lambda_n\atop \lambda_2<px, \lambda_n<p(\beta-\delta_n)}
e^{-2\beta\sum_{i=2}^n\frac{\lambda_i-p\beta}{p(x-\beta)}}
g_{n-1,p-1,\beta}(\lambda_2,\cdots,\lambda_n)d\lambda_2\cdots d\lambda_n\notag\\
&\lesssim&
  \Theta(1)n^2A_n^2(px-p\beta)^{2\beta(n-1)}(px)^{\beta(p-n+1)-2}e^{- px}\notag\\
&&\times \int_{\lambda_2>\lambda_3>\cdots>\lambda_n\atop \lambda_2<px, \lambda_n<p(\beta-\delta_n)}
(n-1)A_{n-1} \prod_{i=2}^{n-1} (\lambda_i-\lambda_n)^\beta
\lambda_n^{\frac{\beta(p-n+1)}{2}-1}
e^{-\frac{\lambda_n}{2}-2\beta\frac{\lambda_n-p\beta}{p(x-\beta)}}\notag\\
&&\quad \times
e^{-2\beta\sum_{i=2}^{n-1}\frac{\lambda_i-p\beta}{p(x-\beta)}}
g_{n-2,p-2,\beta}(\lambda_2,\cdots,\lambda_{n-1})d\lambda_2\cdots d\lambda_n.
\notag\\
\end{eqnarray}
Use the upper bound
$$(\lambda_i-\lambda_n)= (p\delta_n)
\left(1+\frac{\lambda_i-p\beta}{p\delta_n}-\frac{\lambda_n-p(\beta-\delta_n)}{p\delta_n}\right)
\leq  (p\delta_n) e^{\frac{\lambda_i-p\beta}{p\delta_n}-\frac{\lambda_n-p(\beta-\delta_n)}{p\delta_n}}$$
to get
\begin{eqnarray*}
&&\eqref{uppern}\\
&\lesssim&
\Theta(1) n^2A_n^2(px-p\beta)^{2\beta(n-1)}(px)^{\beta(p-n+1)-2}e^{- px}
(n-1)A_{n-1}(p\delta_n)^{\beta(n-2)}\\
&& \int_{\lambda_n<p(\beta-\delta_n)}
\lambda_n^{\frac{\beta(p-n+1)}{2}-1}
e^{-\beta(n-2)\frac{\lambda_n-p(\beta+\delta_n)}{p\delta_n}-\frac{\lambda_n}{2}
-2\beta\frac{\lambda_n-p\beta}{p(x-\beta)}}d\lambda_n\notag\\
&& \times
\int_{\lambda_2>\cdots>\lambda_{n-1}\atop \lambda_2<px}
e^{\beta\sum_{i=2}^{n-1}\frac{\lambda_i-p\beta}{p\delta_n}-2\beta\sum_{i=2}^{n-1}\frac{\lambda_i-p\beta}{p(x-\beta)}}\\
&&\times g_{n-2,p-2,\beta}(\lambda_2,\cdots,\lambda_{n-1})d\lambda_2\cdots d\lambda_{n-1}\\
&\lesssim&  \Theta(1)  n^2A_n^2(px-p\beta)^{2\beta(n-1)}(px)^{\beta(p-n+1)-2}e^{- px}
\times e^{-(1+o(1))\frac{p\delta_n^2}{4\beta}-\Theta(\log p)},
\end{eqnarray*}
where the last step follows from the same argument as in the proof of \eqref{xgj5}.
Together with the result in Theorem \ref{theorem1}, we obtain (\ref{watermellon}).
\end{proof}~\\

\section{Proof of Theorem 3}\label{aaax}
 \begin{proof}[Proof of Theorem \ref{Thmconstant}]
Consider the case when $p/n\to \gamma\in[1,\infty).$
{Recall the definition of $\tilde Q$ in \eqref{tildeLp} and $\tilde L_p=\frac{dP}{d\tilde{Q}}1_{(\lambda_{(1)} > px)}$.} Write
 \begin{eqnarray*}
&&E^{\tilde Q}\left[\tilde L_p^2\right]= E^{\tilde Q}\biggr[\left(\frac{dP}{d\tilde Q}\right)^2; \lambda_{(1)}>pM\biggr]
+E^{\tilde Q}\biggr[\left(\frac{dP}{d\tilde Q}\right)^2;px< \lambda_{(1)}<pM\biggr],
\end{eqnarray*}
where $M$ is some big constant. We first show that
  \begin{eqnarray}\label{Street_light}
\lim_{M\to\infty}\limsup_{n\to\infty}
\frac{1}{n}\log E^{\tilde Q}\biggr[\left(\frac{dP}{d\tilde Q}\right)^2; \lambda_{(1)}>pM\biggr]
&=&-\infty.
\end{eqnarray}
In fact, by (\ref{tildeLp}),
  \begin{eqnarray*}
&&\lim_{M\to\infty}\limsup_{n\to\infty}
\frac{1}{n}\log
E^{\tilde Q}\biggr[\left(\frac{dP}{d\tilde Q}\right)^2;  \lambda_{(1)}>pM\biggr]\\
&=&
\lim_{M\to\infty}\limsup_{n\to\infty}
\frac{1}{n}\log
E^{\tilde Q}\biggr[
\biggr(\frac{nA_n  \prod_{i=2}^n(\lambda_{(1)}-\lambda_{(i)})^{\beta}
\cdot \lambda_{(1)}^{\frac{\beta(p-n+1)}{2}-1}\cdot
e^{-\frac{1}{2} \lambda_{(1)}}}
{J_{\beta,x} e^{-J_{\beta,x} (\lambda_{(1)}-px\vee\lambda_{(2)})}\cdot I_{(\lambda_{(1)}>px\vee \lambda_{(2)})}}\biggr)^2;\\
&&\quad\quad\quad\quad\quad\quad\quad\quad\quad\quad\quad\quad\quad\quad\quad
 \lambda_{(1)}>pM\biggr]
\notag\\
&\leq&
\lim_{M\to\infty}\limsup_{n\to\infty}\frac{1}{n}\log
E_{\lambda_{(2)}}\biggr[
\int_{\lambda_1>pM,\atop \lambda_1>\lambda_{(2)}}
J_{\beta,x}^{-2}{n^2A^2_n \lambda_1^{{\beta(p+n-1)}-2}e^{-\lambda_1+2J_{\beta,x} (\lambda_1-px\vee\lambda_{(2)})}}\\
&&\quad\quad\quad\quad\quad\quad\quad\quad\quad\quad\quad\quad\quad\quad\quad
\times J_{\beta,x}e^{-J_{\beta,x} (\lambda_1-px\vee\lambda_{(2)})}
  d\lambda_1\biggr]\\
  &\leq&
\lim_{M\to\infty}\limsup_{n\to\infty}\frac{1}{n}\log \int_{\lambda_1>pM}{J_{\beta,x}^{-1}}
n^2A^2_n
 \lambda_1^{{\beta(p+n-1)}-2}\cdot
e^{-\lambda_1+J_{\beta,x} \lambda_1-J_{\beta,x} px}  d\lambda_1\\
  &\leq&
\lim_{M\to\infty}\limsup_{n\to\infty}\frac{1}{n}\log
\int_{0}^\infty
n^2A^2_n
(pM)^{{\beta(p+n-1)}-2}\\
&&\quad\quad\quad\quad\quad\quad\quad\quad\quad
\times e^{({\beta(p+n-1)}-2)\lambda_1/(pM)-(1-J_{\beta,x} ) (\lambda_1+pM)-J_{\beta,x} px}
  d\lambda_1\\
&=&
\lim_{M\to\infty}\limsup_{n\to\infty}\frac{1}{n}\log
[A^2_n
(pM)^{{\beta(p+n-1)}-2}
e^{-(1-J_{\beta,x} ) pM-J_{\beta,x} px}]
 =-\infty,
  \end{eqnarray*}
where $E_{\lambda_{(2)}}$ denotes the expectation with respect to $\lambda_{(2)}$. In particular,  in the second step we used $\lambda_{(1)}-\lambda_{(i)}<\lambda_{(1)}$ and the fact that under $\tilde Q$ the conditional density of $\lambda_{(1)}$ given $\lambda_{(2)}$  is $J_{\beta,x}e^{-J_{\beta,x} (\lambda_1-px\vee\lambda_{(2)})}$  (see Section 2.2). The third step follows from the fact $e^{-J_{\beta,x} (px\vee\lambda_{(2)})}<e^{-J_{\beta,x} px}$. In the fourth step we changed variable $\lambda_1$ to $\lambda_1+px$ for the integral. In the last step we used the approximation in \eqref{logA}, which gives, for $p/n\to\gamma$,
\begin{eqnarray}\label{logA3}
&&\quad\quad \log A_n\\
&&\sim~ -\frac{\beta p}{2}\log p -\frac{\beta p}{2}\left(\log {\beta}-1\right)
-\frac{\beta n}{2}\log n-\frac{\beta n}{2}\left(\log{\beta}-1\right)+O(\log n)
\notag\\
&&\sim~ -\frac{\beta}{2}(\gamma+1) n\log n-\frac{\beta}{2} [(\gamma+1)(\log\beta-1)+\gamma\log\gamma]\,n+O(\log n).\notag
\end{eqnarray}

From (\ref{Street_light}), we only need to focus on $E^{\tilde Q}[({dP}/{d\tilde Q})^2;px< \lambda_{(1)}<pM].$
 Recall that $\sigma_{\beta}$ denotes the equilibrium  measure for the large deviations of the empirical distribution of eigenvalues $(\lambda_1/n,\cdots,\lambda_n/n)$ under $P$ (Lemma 2.6.2 from Anderson et al 2010). Let $B(\epsilon)$ be the ball of probability measures defined on $[0,2\gamma M]$ with radius $\epsilon$ around
 $\sigma_{\beta}$ under the following metric $\rho$ that generates the weak convergence of probability measures on $\mathbb{R}$: for two probability
measures  $\mu$ and $\nu$ on $\mathbb{R}$,
\begin{eqnarray}\label{Newton}
\rho(\mu,\nu)=\sup_{\|h\|_L\leq 1}\Big| \int_{\mathbb{R}} h(x)\mu(dx)-\int_{\mathbb{R}} h(x) d\nu(dx)
\Big|
\end{eqnarray}
where $h$ is a bounded Lipschitz function defined on $\mathbb{R}$ with
$\|h\| = \sup_{x\in\mathbb{R}} |h(x)|$ and $\|h\|_L=\|h\| +\sup_{x\neq y}|h(x)-h(y)|/|x-y|.$

Let ${\cal L}_{n-1}$ be the empirical measure of  $(\lambda_2/(n-1),\cdots,\lambda_n/(n-1))$ with $\lambda_2,\cdots,\lambda_n$ being constructed as in {\it Step 1} of Algorithm \ref{algorithm1} in Section \ref{subsubsec:Sim}. {Notice $(n-1)\mbox{Supp}({\cal L}_{n-1}) \subset [0, p M]$ under the restriction $\lambda_1\leq pM$}. For any $\epsilon>0$, we first consider the following expectation
  \begin{eqnarray}
&&\limsup_{n\to\infty}
\frac{1}{n}\log E^{\tilde Q}\left[\left(\frac{dP}{d\tilde Q}\right)^2; pM>\lambda_1>px,\lambda_1>\cdots>\lambda_n,
 {\cal L}_{n-1} \notin B(\epsilon)\right]
 \notag\\
&&\leq~
\limsup_{n\to\infty}\frac{1}{n}\log E^{\tilde Q}\biggr[
\biggr(\frac{nA_n\times
\prod_{i=2}^n(pM)^{\beta}
\cdot \lambda_1^{\frac{\beta(p-n+1)}{2}-1}\cdot
e^{-\frac{1}{2} \lambda_1}}
{J_{\beta,x} e^{-J_{\beta,x} (\lambda_1-px)}}
\biggr)^2;
\notag\\
&&~~~~~~~~~~~~~~~~~~~~~~~~~~~~~~
 pM>\lambda_1>px,  {\cal L}_{n-1} \notin B(\epsilon)\biggr],
 \label{aaaa}
\end{eqnarray}
where the above inequality follows from the fact that $\lambda_1-\lambda_i<pM$ and $\lambda_{(2)}\vee px>px$.
Note that $\frac{nA_n\times
\prod_{i=2}^n(pM)^{\beta}
\cdot \lambda_1^{\frac{\beta(p-n+1)}{2}-1}\cdot
e^{-\frac{1}{2} \lambda_1}}
{J_{\beta,x} e^{-J_{\beta,x} (\lambda_1-px)}}=e^{O(n\log n)}$ under the assumption that $p/n\to\gamma$ and $\lambda_1<pM$.
We have
\begin{eqnarray*}
\eqref{aaaa}
&\leq& \limsup_{n\to\infty}\frac{1}{n}\log [e^{O(n\log n)}\tilde Q( pM>\lambda_1>px,  {\cal L}_{n-1} \notin B(\epsilon))]\\
&\leq& \limsup_{n\to\infty}\Big\{O(\log n)+\frac{1}{n}\log P({\cal L}_{n-1} \notin B(\epsilon))\Big\}.
\end{eqnarray*}
The large deviation result for ${\cal L}_{n-1}$ (Theorem 2..6.1 in Anderson et al 2010) implies that
$\limsup_{n\to\infty}\frac{1}{n^2}\log P({\cal L}_{n-1} \notin B(\epsilon))<0.$
 Thus,
\begin{eqnarray}\label{supermoon}
\eqref{aaaa}=-\infty
\end{eqnarray}
for any $\epsilon>0$. From (\ref{Street_light}) and (\ref{supermoon}), to estimate $E^{\tilde Q}[\tilde L_p^2]$, we  need to further explore the expectation under the restriction  $\Omega_n:=\{px<\lambda_{(1)}<pM\ \mbox{and} \ {\cal L}_{n-1} \in B(\epsilon)\}$.
Let $\Phi(z,\epsilon)=\sup_{\mu\in B(\epsilon)}\int \log(z-y)[\mu(dy)-\sigma_\beta(dy)].$
We have
  \begin{eqnarray*}
W_n&:=&
E^{\tilde Q}\left[\left(\frac{dP}{d\tilde Q}\right)^2;\Omega_n\right]
\notag\\
&= &
E^{\tilde Q}\left[
\biggr(\frac{nA_n  \prod_{i=2}^n(\lambda_{(1)}-\lambda_{(i)})^{\beta}
\cdot \lambda_{(1)}^{\frac{\beta(p-n+1)}{2}-1}\cdot
e^{-\frac{1}{2} \lambda_{(1)}}}
{J_{\beta,x} e^{-J_{\beta,x} (\lambda_{(1)}-px\vee\lambda_{(2)})}\cdot I_{(\lambda_{(1)}>px\vee \lambda_{(2)})}}\biggr)^2;
~ \Omega_n\right]
\notag\\
&\leq &O(1)n^2 A_n^2
E^{\tilde Q}\left[
e^{2\beta\sum_{i=2}^n\log(\lambda_{(1)}-\lambda_{(i)})}
 \lambda_{(1)}^{{\beta(p-n+1)}-2}
e^{-\lambda_{(1)} +2J_{\beta,x} (\lambda_{(1)}-px)};
~ \Omega_n\right]\\
&\leq& O(1)n^2 A_n^2 n^{2\beta n}
\notag\\
&& \times
\int_{px}^{pM}
e^{2\beta(n-1)\Phi(\frac{\lambda_1}{n-1},\epsilon)+2\beta(n-1)\int \log(\frac{\lambda_1}{n-1}-y)\sigma_\beta(dy)}\\
&& \times \lambda_1^{\beta(p-n+1)-2}
e^{-\lambda_1+J_{\beta,x} (\lambda_1-px)} d\lambda_1,
\notag
\end{eqnarray*}
where in the second step we simply used the inequality $e^{-J_{\beta,x}px\vee\lambda_2}<e^{-J_{\beta,x}px}$ and in the last step we used that for ${\cal L}_{n-1} \in B(\epsilon)$
\begin{eqnarray*}
&&\sum_{i=2}^n\log(\lambda_{(1)}-\lambda_{(i)})\\
& = & (n-1)\int_{\mathbb{R}}\log\Big(\frac{\lambda_{(1)}}{{n-1}}-y\Big)\,{\cal L}_{n-1}(dy)+(n-1)\log (n-1)\\
&\leq & (n-1) \Phi\Big(\frac{\lambda_{(1)}}{n-1},\epsilon\Big)
+(n-1)\int_\mathbb{R} \log\Big(\frac{\lambda_{(1)}}{n-1}-y\Big)\sigma_\beta(dy)+n\log n.
\end{eqnarray*}
Observe that
$$\Phi\Big(\frac{\lambda_{(1)}}{n-1},\epsilon\Big)
\leq \sup_{z\in[\frac{px}{n-1}, \frac{pM}{n-1}]}\Phi(z,\epsilon)$$
under the constraint $px<\lambda_{(1)}<pM$ and that
\begin{eqnarray*}
&& \int_\mathbb{R} \log\Big(\frac{\lambda_{(1)}}{n-1}-y\Big)\sigma_\beta(dy)\\
&=&
\int_\mathbb{R} \log(\frac{px}{n-1}-y)\sigma_\beta(dy)
+\int_\mathbb{R} \log\Big(1+\frac{\lambda_1-px}{px-(n-1)y}\Big)\sigma_\beta(dy)\\
&\leq&
\int_\mathbb{R} \log(\frac{px}{n-1}-y)\sigma_\beta(dy)
+\int_\mathbb{R}  \frac{\lambda_1-px}{px-(n-1)y}\sigma_\beta(dy).
\end{eqnarray*}
 It follows that
\begin{align}
W_n
\leq&~ O(1)n^2 A_n^2 n^{2\beta n}
e^{2\beta(n-1) \sup_{z\in[\frac{px}{n-1},\frac{pM}{n-1}]}\Phi(z,\epsilon)
+2\beta(n-1)\int \log(\frac{px}{n-1}-y)\sigma_\beta(dy)}
\notag\\
& \times \int_{px}^{pM}
e^{2\beta(n-1) \int \frac{\lambda_1-px}{px-(n-1)y}d\sigma_\beta(y)}
\cdot \lambda_1^{\beta(p-n+1)-2}\cdot
e^{-(1-J_{\beta,x} )\lambda_1-J_{\beta,x} px} d\lambda_1 \notag\\
=&~
O(1)n^2 A_n^2 n^{2\beta n}
e^{2\beta(n-1) \sup_{z\in[\frac{px}{n-1},\frac{pM}{n-1}]}\Phi(z,\epsilon)
+2\beta(n-1)\int \log(\frac{px}{n-1}-y)\sigma_\beta(dy)}
\notag\\
&~ \times \int_{0}^{p(M-x)}
e^{2\beta(n-1) \int \frac{\lambda_1}{px-(n-1)y}d\sigma_\beta(y)}
\cdot (\lambda_1+px)^{\beta(p-n+1)-2}\notag\\
&~\times
e^{-(1-J_{\beta,x} )(\lambda_1+px)-J_{\beta,x} px} d\lambda_1 \notag\\
\leq&~
 O(1) n^2 A_n^2 n^{2\beta n}
e^{2\beta(n-1)\sup_{z\in[\gamma'x, 2\gamma M]}\Phi(z,\epsilon)
+2\beta(n-1)\int \log(\frac{px}{n-1}-y)\sigma_\beta(dy)}
\notag\\
&~ \times (px)^{\beta(p-n+1)-2}e^{- px}\notag\\
&~ \times
\int_{0}^{p(M-x)}
e^{2\beta(n-1) \int \frac{\lambda_1}{p x-(n-1)y}d\sigma_\beta(y)
+(\beta(p-n+1)-2)\frac{\lambda_1}{px}
-(1-J_{\beta,x} )\lambda_1} d\lambda_1,
\label{xiaoshu}
\end{align}
where $\gamma'\in (x^*/x, \gamma)$; in the second  step we changed the variable $\lambda_1$ to $(\lambda_1+px)$ for the integral and in the last step we used $(\lambda_1+px)^{{\beta(p-n+1)-2}}\leq (px)^{\beta(p-n+1)-2}e^{(\beta(p-n+1)-2)\lambda_1/(px)}$.

Next we show that
\begin{eqnarray}\label{High_tower}
\limsup_{\epsilon\to 0}\sup_{z\in[\gamma'x,2\gamma M]}\Phi(z,\epsilon)\leq 0.
\end{eqnarray}
Recall the definition of $B(\epsilon)$ and (\ref{Newton}). For any $z\in [\gamma'x,2\gamma M]$ and $\mu\in B(\epsilon)$,
let ${\cal S}_1=\{y\in \mbox{supp}(\sigma_\beta)\cup \mbox{supp}(\mu): |z-y|> \eta\}$ and
${\cal S}_2=\{y\in \mbox{supp}(\sigma_\beta)\cup \mbox{supp}(\mu): |z-y|\leq \eta\}$,
 where $\mbox{supp}(\mu)$ is the support of measure $\mu$ and $\eta$ is a small constant such that $\eta<\min\{\gamma'x-x^*,1\}$.
 Note that $\mbox{supp}(\sigma_\beta)\subset {\cal S}_1.$
 {Given $z\in [\gamma'x,2\gamma M]$, set $f_z(y):=\log(|z-y|)$ for $y\in {\cal S}_1$.
 Then, the Lipschitz norms of the set of functions $\{f_z(\cdot);\, z\in [\gamma'x,2\gamma M]\}$ is bounded by a constant $C<\infty$. By the definition of $\rho(\cdot, \cdot)$ in (\ref{Newton}),
\begin{align*}
&\sup_{z\in [\gamma'x,2\gamma M]}\int_{\mathbb{R}} \log(|z-y|)[\mu(dy)-\sigma_\beta(dy)]\\
\leq &
\sup_{z\in [\gamma'x,2\gamma M]}\int_{{\cal S}_1} \log(|z-y|)[\mu(dy)-\sigma_\beta(dy)]
+\sup_{z\in [\gamma'x,2\gamma M]}\int_{{\cal S}_2} \log(|z-y|)\mu(dy)
\\
\leq&
\sup_{z\in [\gamma'x,2\gamma M]}\int_{{\cal S}_1} f_z(y)[\mu(dy)-\sigma_\beta(dy)]\\
 \leq & ~C \rho(\mu, \sigma_\beta)<C\epsilon
\end{align*}
for any $\mu\in B_{\epsilon}$. This implies that $\sup_{z\in[\gamma'x,2\gamma M]}\Phi(z,\epsilon)<C\epsilon$. Then (\ref{High_tower}) follows.}

From \eqref{rate}, we know that the integral term in \eqref{xiaoshu} is $\sim O(1).$
Joining this with (\ref{Street_light}),  \eqref{logA3} and (\ref{supermoon}), we conclude
\begin{eqnarray*}
&&\limsup_{n\to\infty}\frac{1}{n}\log
 E^{\tilde Q}\left[\tilde L_p^2\right]\\
&\leq& \lim_{\epsilon\to 0}\limsup_{n\to\infty}\frac{1}{n}\log\{ \mbox{display }\eqref{xiaoshu}\}\\
&=& 2\beta\int \log(\gamma x-y)\sigma_\beta(dy)-\gamma x+\beta(\gamma-1)\log(\gamma x)-2\alpha_\beta,\\
& = & -2I_{\beta}(\gamma x)
\end{eqnarray*}
where $\alpha_\beta= \frac{\beta}{2}[(\gamma+1)(\log\beta-1)+\gamma\log \gamma]$  and $I_{\beta}$ is defined as in \eqref{LDP}.
By the large deviation result in \eqref{LDP},  we have $\lim_{n\to\infty}\frac{1}{n}\log P(\lambda_{(1)}>px)=-I_{\beta}(\gamma x)$. Hence
$$\limsup_{n\to\infty}\frac{\log E^{\tilde Q}\left[\tilde L_p^2\right]}
{2\log P(\lambda_{(1)}>px)}\leq 1.$$
On the other hand, review $L_p=\frac{dP}{d\tilde Q}$, we know  $E^{\tilde Q}[\tilde L_p^2]\geq P(\lambda_{(1)}>px)^2$ by H\"{o}lder's inequality. The two facts imply the desired conclusion.
\end{proof}

\end{document}